\documentclass[12pt]{amsart}
\usepackage{geometry}
\geometry{verbose,tmargin=3cm,bmargin=3.5cm,lmargin=3.5cm,rmargin=3.5cm}
\usepackage{xcolor}
\usepackage{url}
\usepackage{amsthm}
\usepackage{amstext}
\usepackage{amssymb}
\usepackage[all]{xy}
\PassOptionsToPackage{normalem}{ulem}
\usepackage{ulem}

\makeatletter

%%%%%%%%%%%%%%%%%%%%%%%%%%%%%% Textclass specific LaTeX commands.
\numberwithin{equation}{section}
\numberwithin{figure}{section}
\theoremstyle{plain}
\newtheorem{thm}{Theorem}[section]
  \theoremstyle{remark}
  \newtheorem{rem}[thm]{Remark}
  \theoremstyle{plain}
  \newtheorem{prop}[thm]{Proposition}
  \theoremstyle{plain}
  \newtheorem{lem}[thm]{Lemma}
  \theoremstyle{plain}
  \newtheorem{cor}[thm]{Corollary}
  \theoremstyle{plain}
  \newtheorem{conjecture}[thm]{Conjecture}
  \theoremstyle{definition}
  \newtheorem{defn}[thm]{Definition}
  \theoremstyle{definition}
  \newtheorem{problem}[thm]{Problem}

%%%%%%%%%%%%%%%%%%%%%%%%%%%%%% User specified LaTeX commands.

\makeatletter

%%%%%%%%%%%%%%%%%%%%%%%%%%%%%% User specified LaTeX commands.

\newcommand{\quot}{/\!\!/}

\def\hom{\mathsf{Hom}}

\def\quot{/\!\!/}
\renewcommand{\hom}{\mathsf{Hom}}

\newcommand{\C}{\mathbb{C}}
\newcommand{\X}{\mathfrak{X}}
\newcommand{\R}{\mathbb{R}}
\newcommand{\Z}{\mathbb{Z}}
\newcommand{\Q}{\mathbb{Q}}
\newcommand{\GL}{\mathsf{GL}}
\newcommand{\SL}{\mathsf{SL}}
\newcommand{\U}{\mathsf{U}}
\newcommand{\SU}{\mathsf{SU}}

\title[Character Varieties of Abelian Groups]{Topology of character varieties of Abelian groups}

\author[C. Florentino]{Carlos Florentino}

\address{Departamento Matem\'atica, Instituto Superior T\'ecnico, Av. Rovisco
Pais, 1049-001 Lisbon, Portugal}

\email{cfloren@math.ist.utl.pt}

\author[S. Lawton]{Sean Lawton}

\address{Department of Mathematics, The University of Texas-Pan American,
1201 West University Drive Edinburg, TX 78539, USA}

\email{lawtonsd@utpa.edu}

\thanks{This work was partially supported by the project GEAR (NSF-RNMS 1107367) USA, Simon's Foundation Collaboration grant (\#245642) USA, NSF grant 1309376 USA, and the projects PTDC/MAT/099275/2008 and PTDC/MAT/120411/2010, FCT, Portugal.}

\keywords{Character varieties, Abelian groups, Commuting elements in Lie groups, Reductive group actions}

\subjclass[2010]{Primary 14L30, 14P25; Secondary 14L17, 14L24, 22E46}

\makeatother

\begin{document}
\begin{abstract}
Let $G$ be a complex reductive algebraic group (not necessarily connected),
let $K$ be a maximal compact subgroup, and let $\Gamma$ be a finitely
generated Abelian group. We prove that the conjugation orbit space
$\hom(\Gamma,K)/K$ is a strong deformation retract of the GIT quotient
space $\hom(\Gamma,G)\quot G$. Moreover, this result remains true
when $G$ is replaced by its locus of real points. As a corollary,
we determine necessary and sufficient conditions for the character
variety $\hom(\Gamma,G)\quot G$ to be irreducible when $G$ is connected
and semisimple. For a general connected reductive $G$, analogous
conditions are found to be sufficient for irreducibility, when $\Gamma$
is free Abelian.
\end{abstract}
\maketitle
%\tableofcontents{}

\section{Introduction}

The description of the space of commuting elements in a compact Lie
group is an interesting algebro-geometric problem with applications
in mathematical physics, remarkably in supersymmetric Yang-Mills theory
and mirror symmetry (\cite{BFM,KaSm,Th,Wi}). Some special cases of
this problem and related questions have recently received attention,
as can be seen, for example, in the articles \cite{AGC,BaJeSe,Ba1,FlLa,GoPeSo,PeSo,Si5}.

Let $K$ be a compact Lie group and view the $\mathbb{Z}$-module
$\mathbb{Z}^{r}$ as a free Abelian group of rank $r$, for a fixed
integer $r>0$. The space of commuting $r$-tuples of elements in
$K$ can be naturally identified with the set $\hom(\mathbb{Z}^{r},K)$
of group homomorphisms from $\mathbb{Z}^{r}$ to $K$, by evaluating
a homomorphism on a set of free generators for $\mathbb{Z}^{r}$.
From both representation-theoretic and geometric viewpoints, one is
interested in homomorphisms up to conjugacy, so the quotient space
$\X_{\Gamma}(K):=\hom(\mathbb{Z}^{r},K)/K$, where $K$ acts by conjugation,
is the main object to consider. Since every compact Lie group is isomorphic
to a matrix group, it is not difficult to see that this orbit space
is a semialgebraic space, but many of its general properties remain
unknown.

In this article, we also consider the analogous space for a complex
reductive affine algebraic group $G$, or its real points $G(\R)$.
More generally, in many of our results we replace the free Abelian
group $\mathbb{Z}^{r}$ by an arbitrary finitely generated Abelian
group $\Gamma$.

In this context, it is useful to work with the geometric invariant
theory (GIT) quotient space, denoted by $\hom(\Gamma,G)\quot G$,
and usually called the $G$-character variety of $\Gamma$ (see Section
2 for details). The character varieties $\X_{\Gamma}(G):=\hom(\Gamma,G)\quot G$
are naturally affine algebraic sets; not necessary irreducible, smooth,
nor homotopically trivial. In the case of its real locus $G(\R)$,
we consider the space of closed orbits, also denoted $\X_{\Gamma}(G(\R))$,
which is semi-algebraic by \cite{RS}.

Here is our first main result: 
\begin{thm}
\label{thm:Main}Let $G$ a complex or real reductive algebraic group,
$K$ a maximal compact subgroup of $G$, and $\Gamma$ a finitely
generated Abelian group. Then there exists a strong deformation retraction
from $\X_{\Gamma}(G)$ to $\X_{\Gamma}(K)$. 
\end{thm}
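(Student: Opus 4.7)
My plan is to reduce the theorem to polystable representations and deform them using the global Cartan decomposition $G = K\cdot\exp(i\mathfrak{k})$. By GIT, $\X_{\Gamma}(G)=\hom(\Gamma,G)\quot G$ parametrizes the closed $G$-orbits in $\hom(\Gamma,G)$, and these orbits are represented by the polystable (semisimple) homomorphisms---those whose image consists of commuting semisimple elements of $G$. I would therefore construct a continuous $K$-equivariant deformation on the polystable locus that lands in $\hom(\Gamma,K)$, and then verify that it descends to a strong deformation retraction on the GIT quotient.

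\textbf{The deformation.} The Cartan decomposition gives for each $g\in G$ a unique continuous factorization $g = k(g)\exp(\xi(g))$ with $k(g)\in K$ and $\xi(g)\in i\mathfrak{k}$. The key lemma I would establish is that every polystable $\rho:\Gamma\to G$ is $G$-conjugate to one whose image lies in a reductive abelian subgroup $H\le G$ for which $H\cap K$ is the maximal compact subgroup of $H$; for such a $\rho$, the polar factors $k(\rho(\gamma))$ and $\exp(\xi(\rho(\gamma)))$ all lie in $H$ and pairwise commute. One then sets
\[
F_{s}(\rho)(\gamma) \;=\; k(\rho(\gamma))\,\exp\bigl(s\,\xi(\rho(\gamma))\bigr),\qquad s\in[0,1].
\]
Commutativity of the polar factors makes $F_{s}(\rho)$ a homomorphism; by construction $F_{1}=\mathrm{id}$, $F_{0}\in\hom(\Gamma,K)$, and $F_{s}$ fixes $\hom(\Gamma,K)$ pointwise. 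Since polar decomposition is preserved by $K$-conjugation, $F_{s}$ is $K$-equivariant.

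\textbf{Descent and main obstacle.} To pass $F_{s}$ to the GIT quotient one needs two statements: every closed $G$-orbit in $\hom(\Gamma,G)$ meets $\hom(\Gamma,K)$, and two $G$-conjugate points of $\hom(\Gamma,K)$ are already $K$-conjugate, so that the natural map $\hom(\Gamma,K)/K\hookrightarrow\X_{\Gamma}(G)$ is a homeomorphism onto its image. Both reduce to conjugacy properties of maximal tori and maximal compact subgroups of $G$; torsion in $\Gamma$ is accommodated by a Mostow-type theorem conjugating finite subgroups of $G$ into $K$, and the disconnected case is handled componentwise after verifying that $K$ meets every component of $G$. The principal technical obstacle is continuity of the descended map $\widetilde{F}_{s}:\X_{\Gamma}(G)\times[0,1]\to\X_{\Gamma}(G)$ across orbit-type strata: as $\rho$ varies, the minimal reductive subgroup containing its image can jump in dimension, and one must show that despite this the polar data assemble continuously on the quotient. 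The global continuity of the Cartan decomposition on $G$ makes this plausible, but verifying it carefully at the stratum boundaries---and reconciling the $G$-equivariance on the polystable locus with the only $K$-equivariance of the polar decomposition---will be the heart of the proof.
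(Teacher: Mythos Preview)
Your overall architecture---identify the polystable locus, deform it into representations with compact image, descend to the quotient---is exactly the paper's. The gap is in the deformation you choose. The global Cartan factorisation $g=k(g)\exp(\xi(g))$ is only $K$-equivariant, and as a consequence it \emph{does not preserve commutativity}: if $g_{1},g_{2}\in G_{ss}$ commute but are not normal (e.g.\ they sit in a maximal torus $hTh^{-1}$ with $h\notin K$), there is no reason for $k(g_{1})$ and $k(g_{2})$ to commute, so your $F_{s}(\rho)$ is not a homomorphism for general polystable $\rho$. Your ``key lemma'' fixes this only after conjugating each $\rho$ into a $K$-compatible abelian subgroup, but that conjugation varies discontinuously with $\rho$ and is not $G$-canonical; so either $F_{s}$ fails to land in $\hom(\Gamma,G)$, or it fails to be globally defined and continuous. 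You correctly flag the $K$- versus $G$-equivariance tension, but it is not merely a boundary-continuity nuisance: it is exactly what prevents the Cartan retraction from working. (The paper in fact remarks, citing~\cite{So}, that no factor-wise retraction $G\to K$ can preserve commutativity.)

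The paper evades this by retracting not to $K$ but to $G(K)=\bigcup_{g\in G}gKg^{-1}$, which \emph{is} $G$-stable. The retraction $\delta_{t}$ is built by diagonalising $g\in G_{ss}$ inside a fixed $\SL(n,\C)$ and scaling the moduli of the eigenvalues; since two diagonalisations differ by a Weyl-group element and the scaling is Weyl-invariant, $\delta_{t}$ is well defined and automatically $\SL(n,\C)$-equivariant, hence $G$-equivariant. Equivariance then gives preservation of commutativity (and of torsion) for free, so $\delta_{t}^{r}$ restricts to a $G$-equivariant SDR from $\hom(\Gamma,G_{ss})$ onto $\hom(\Gamma,G(K))$. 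A separate, short argument (Proposition~\ref{gkembeds} and Lemma~\ref{ghomkhomgk}) shows $\hom(\Gamma,G(K))/G\cong\hom(\Gamma,K)/K$, which is where your two ``descent'' statements are used. In short: replace the Cartan map $G\to K$ by the eigenvalue-scaling map $G_{ss}\to G(K)$ and the obstacle you identified disappears.
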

We remark that in \cite{FlLa}, the analogous result for complex reductive
$G$ was shown for $\Gamma$ a free (non-Abelian) group $\mathsf{F}_{r}$
of rank $r$; that is, the free group character variety $\hom(\mathsf{F}_{r},G)\quot G$
deformation retracts to $\hom(\mathsf{F}_{r},K)/K$. In the same article,
the special case $K=\mathsf{U}(n)$, $G=\mathsf{GL}(n,\C)$ and $\Gamma=\mathbb{Z}^{r}$
of Theorem \ref{thm:Main} was also shown. Likewise this result remains
true for $G(\R)$ and $\Gamma=\mathsf{F}_{r}$, see \cite{CFLO}.

Pettet and Souto in \cite{PeSo} have shown that, under the hypothesis
of Theorem \ref{thm:Main}, $\hom(\mathbb{Z}^{r},G)$ deformation
retracts to $\hom(\mathbb{Z}^{r},K)$. An intermediate space along
their retraction is the space of representations with closed orbits,
which is also important for understanding the topology of the GIT
quotient. Thus, some steps in their result are useful for our independent
proof of Theorem \ref{thm:Main}, which is moreover very different
from our argument in \cite{FlLa}.

In fact, there are great differences between the free Abelian and
free (non-Abelian) group cases. For instance, the deformation retraction
from $\hom(\mathsf{F}_{r},G)\cong G^{r}$ to $\hom(\mathsf{F}_{r},K)\cong K^{r}$
is trivial, although the deformation from $\X_{\mathsf{F}_{r}}(G)=\hom(\mathsf{F}_{r},G)\quot G$
to $\X_{\mathsf{F}_{r}}(K)=\hom(\mathsf{F}_{r},K)/K$ is not. Moreover,
in the case of free Abelian groups the deformation is not determined
in general by the factor-wise deformation (as happens in the free
group case), since any such deformation cannot preserve commutativity
(see \cite{So}). We conjecture that the analogous result is valid
for right angled Artin groups, a class of groups that interpolates
between free and free Abelian ones (see Section \ref{sub:RAAG}).

Returning to the situation of a general finitely generated group $\Gamma$,
the interest in the spaces $\hom(\Gamma,K)/K$ and $\hom(\Gamma,G)\quot G$
is also related to their differential-geometric interpretation. Consider
a differentiable manifold $B$ whose fundamental group $\pi_{1}(B)$
is isomorphic to $\Gamma$ (when $\Gamma=\mathbb{Z}^{r}$, we can
choose $B$ to be an $r$-dimensional torus). By fixing a base point
in $B$ and using the standard holonomy construction in the differential
geometry of principal bundles, one can interpret $\hom(\Gamma,K)$
as the space of pointed flat connections on principal $K$-bundles
over $B$, and $\X_{\Gamma}(K)=\hom(\Gamma,K)/K$ as the moduli space
of flat connections on principal $K$-bundles over $B$ (see \cite{Th}).

The use of differential and algebro-geometric methods to study the
geometry and topology of these spaces was achieved with great success
when $B$ is a closed surface of genus $g>1$ (in this case $\pi_{1}(B)$
is non-Abelian), via the celebrated Narasimhan-Seshadri theorem and
its generalizations, which deal also with non-compact Lie groups (see,
for example \cite{AB,Hi,NS,Sim1,Sim2}). Indeed, the character varieties
$\X_{\Gamma}(G)$ introduced above can be interpreted as a moduli
space of polystable $G$-Higgs bundles over a compact Kähler manifold
with $\Gamma$ the fundamental group of the manifold, or central extension
thereof (which yields an identification in the topological category,
but not in the algebraic or complex analytic ones). Some of the multiple
conclusions from this approach was the determination of the number
of components for many spaces of the form $\hom(\pi_{1}(B),H)\quot H$
for a closed surface $B$ and real reductive (not necessarily compact
or complex) Lie group $H$ (see for example \cite{BGG,GM}). We remark
that these character varieties are also main players in mirror symmetry
and the geometric Langlands programme (see, for example \cite{GGM,HaTh,KW}).

In contrast, for the case of $\pi_{1}(B)=\mathbb{Z}^{r}$, the number
of path components of $\hom(\mathbb{Z}^{r},K)/K$ has only recently
found a satisfactory answer (for general compact semisimple $K$ and
arbitrary $r$), see \cite{KaSm}.

Using this determination, and also a result of A. Sikora \cite{Si5},
one of the main applications of Theorem \ref{thm:Main} is the following
theorem. By the classification of finitely generated Abelian groups,
any such group $\Gamma$ can be written as $\Gamma=\mathbb{Z}^{r}\oplus\Gamma'$
where $r$ is called the rank of $\Gamma$ and $\Gamma'$ is the finite
group of torsion elements (all of those having finite order). We say
that $\Gamma$ is free if $\Gamma'$ is trivial. 
\begin{thm}
\label{classification} Let $G$ be a semisimple connected algebraic
group over $\C$, and let $r$ be the rank of the Abelian group $\Gamma$.
Then $\X_{\Gamma}(G)$ is an irreducible variety if and only if:

$(1)$ $\Gamma$ is free, and

$(2)$ Either $r=1$, or $r=2$ and $G$ is simply connected, or $r\geq3$
and $G$ is a product of $\SL(n,\C)$'s and $\mathsf{Sp}(n,\C)$'s. 
\end{thm}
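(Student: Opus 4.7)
The plan is to split the biconditional into necessity and sufficiency, using Theorem~\ref{thm:Main} to move connectedness questions about $\X_{\Gamma}(G)$ to the corresponding compact space $\X_{\Gamma}(K)$, where the classification of Kac and Smilga applies.

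For necessity, I would write $\Gamma \cong \mathbb{Z}^r \oplus \Gamma'$ with $\Gamma'$ finite, and handle the two possible obstructions in turn. If $\Gamma' \neq 0$, the inclusion $\Gamma' \hookrightarrow \Gamma$ induces a surjective morphism $\X_\Gamma(G) \twoheadrightarrow \X_{\Gamma'}(G)$, and it suffices to show that the finite set $\X_{\Gamma'}(G)$ is disconnected: since $G$ is connected semisimple, its (positive-dimensional) maximal torus contains elements of every finite order, so $\X_{\Gamma'}(G)$ contains the trivial conjugacy class and at least one other. Hence $\X_\Gamma(G)$ is disconnected and thus reducible. If instead $\Gamma = \mathbb{Z}^r$ but (2) fails, Theorem~\ref{thm:Main} gives that $\X_{\mathbb{Z}^r}(G)$ deformation-retracts onto $\X_{\mathbb{Z}^r}(K)$, and by the Kac--Smilga theorem~\cite{KaSm} the space $\hom(\mathbb{Z}^r, K)$ (translated from the complex $G$ to its maximal compact subgroup $K$) is connected precisely when (2) holds; so failure of (2) makes $\X_{\mathbb{Z}^r}(G)$ disconnected and therefore reducible.

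For sufficiency, I would assume (1) and (2), then show that every commuting $r$-tuple in $G$ can be simultaneously conjugated into a fixed maximal torus $T \subset G$, and finally identify $\X_{\mathbb{Z}^r}(G)$ with the irreducible variety $T^r/W$. The simultaneous-conjugation property holds case by case: trivially for $r=1$; by Richardson's theorem for commuting pairs in simply connected groups when $r=2$; and by classical simultaneous diagonalization in $\SL(n,\C)$'s and $\mathsf{Sp}(n,\C)$'s when $r \geq 3$. Granting this, every closed $G$-orbit in $\hom(\mathbb{Z}^r, G)$ meets $T^r$ in a single Weyl orbit, producing an isomorphism
\[
\X_{\mathbb{Z}^r}(G) \;\cong\; T^r/W,
\]
and $T^r/W$ is irreducible as a finite quotient of the irreducible variety $T^r \cong (\C^\ast)^{r\cdot\rk G}$. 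This packaging is essentially the content of Sikora's recent result~\cite{Si5} in the present setting.

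The hard part is the sufficiency direction: connectedness does not imply irreducibility for complex algebraic varieties, so one cannot simply invert the Kac--Smilga classification. Instead, one must promote connectedness to irreducibility via the explicit maximal-torus description $T^r/W$, which requires the nontrivial Lie-theoretic inputs above (Richardson's theorem and classical simultaneous diagonalization) to confirm the orbit structure in exactly the cases allowed by~(2).
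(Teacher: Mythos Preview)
Your overall strategy matches the paper's: necessity via ``irreducible $\Rightarrow$ connected'' combined with Theorem~\ref{thm:Main} and the Kac--Smilga classification, and sufficiency by citing Steinberg, Richardson, and Sikora for the three cases. A few points deserve tightening.

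For necessity in the torsion case, the paper (Lemma~\ref{lem:not-free}) argues directly on $\hom(\Gamma,G)$ via a character $T\to\C^*$, rather than passing to $\X_{\Gamma'}(G)$. Your route works too, but the blanket claim that $\X_{\Gamma'}(G)$ is finite for arbitrary finite abelian $\Gamma'$ is not as immediate as you suggest; it is cleaner to project only onto a single cyclic factor $\Z_d$, where $\X_{\Z_d}(G)\cong\{t\in T:t^d=1\}/W$ is visibly finite with at least two points.

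For sufficiency, two imprecisions: first, the assertion that \emph{every} commuting $r$-tuple can be conjugated into $T$ is false (take unipotent tuples); the correct statement, which is what you actually use, concerns only closed orbits, i.e.\ semisimple tuples (Proposition~\ref{semisimplelemma}). Second, what you attribute to Richardson is not quite his statement: \cite{Ri79} proves irreducibility of $\hom(\Z^2,G)$ for simply connected $G$, not a simultaneous-conjugation claim. The torus description you want is Sikora's Theorem~\ref{pro:Sikora}, and there the map $T^r/W\to\X^0_{\Z^r}(G)$ is only a bijective normalization, not an isomorphism of varieties; irreducibility still follows since the image of an irreducible variety is irreducible. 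Finally, the paper handles products of $\SL$'s and $\mathsf{Sp}$'s via Proposition~\ref{gproduct} rather than by a direct simultaneous-diagonalization argument inside the product; either works.
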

An interesting consequence of this result is a sufficient condition
for irreducibility of $\X_{\mathbb{Z}^{r}}(G)$ for a general connected
reductive $G$ (see Corollary \ref{cor:reductive}), generalizing
the result in \cite[Prop. 2.6]{Si5}.

Theorem \ref{classification} is also related to an analogous problem
in a different context. Let $C_{r,n}$ be the algebraic set of commuting
$r$-tuples of $n\times n$ complex matrices. Determining the irreducibility
of $C_{r,n}$ is a surprisingly difficult linear algebra problem,
related to the determination of canonical forms for similarity classes
of general $r$-tuples of matrices (see \cite{GePo}). In fact, as
a consequence of deep theorems by Gerstenhaber and Guralnick (see
\cite{Ger,Gur}) $C_{r,n}$ was shown to be reducible when $r,n\geq4$,
and when $r=3$ and $n\geq32$; moreover, $C_{r,n}$ is irreducible
when $r=1,2$ (for all $n$) and when $r=3$ and $n\leq8$. The remaining
cases: $r=3$ and $n$ strictly between 8 and 32, are still open (as
far as we know). We explore the relation between this problem and the
problem of irreducibility of $\hom(\Z^{r},G)$, when $G=\GL(n,\C)$ and 
$G=\SL(n,\C)$.

We finish the Introduction with a summary of the article. The proof
of Theorem \ref{thm:Main}, focused on the case when $G$ is complex,
is divided into three main steps. The first step, carried out in Section
2, consists in obtaining the identifications: \begin{align*}
\hom(\Gamma,K)/K & =G(\hom(\Gamma,K))/G,\\
\hom(\Gamma,G)\quot G & =\hom(\Gamma,G)^{ps}/G.\end{align*}
 Here, for a subset $X\subset\hom(\Gamma,G)$, we let $G(X):=\{gxg^{-1}:\ g\in G,\ x\in X\}$,
and the superscript $ps$ refers to the subset of representations
with closed orbits (called polystable). These identifications hold,
in fact, for any finitely generated group $\Gamma$. For the second
step, in Section 3, we restrict to a fixed Abelian $\Gamma$. It consists
in showing that one can replace the polystable representations by
{}``representations'' into $G_{ss}$, the semisimple part of $G$,
and that we have $G(\hom(\Gamma,G))=\hom(\Gamma,G(K))$, where $G(K):=\{gkg^{-1}:\ g\in G,\ k\in K\}$.

Finally in Section 4, the proof is completed by constructing a strong
deformation retraction from $G_{ss}$ to $G(K)$ with a certain number
of desired properties. This last part of the proof is inspired by
the methods and results of \cite{PeSo}, although is self-contained.
Note also that (except for the partial results on right angled Artin
groups) we do not need to use their generalized Jordan decomposition,
since in our GIT quotient framework, we can work directly with polystable
representations. The case when $G$ is real is addressed in remarks
following the above outline.

In Section 5, besides proving Theorem \ref{classification}, we also
study two different characterizations of a special component of $\X_{\mathbb{Z}^{r}}(G)$,
usually called the identity component. This, together with known results
on the compact group case, provides a final application of Theorem
\ref{thm:Main} to the simple connectivity and to the cohomology ring
of the character varieties $\X_{\mathbb{Z}^{r}}(G)$, in a few examples,
such as when $G$ is $\SL(n,\C)$ or $\mathsf{Sp}(n,\C)$.

%----------

\section{Quotients and Character Varieties}

An affine algebraic group is called \emph{reductive} if it does not
contain any non-trivial closed connected unipotent normal subgroup
(see \cite{Bo,hum} for generalities on algebraic groups). Since we
do not consider Abelian varieties in this paper, we will abbreviate
the term \textit{affine algebraic group} to simply \textit{algebraic
group.}

Let $G$ be a complex reductive algebraic group (we include the possibility
that $G$ is disconnected), and let $X$ be a complex affine $G$-variety,
that is, we have an action $G\times X\to X$, $(g,x)\mapsto g\cdot x$
which is a morphism between affine varieties. We use the term \textit{affine
variety} to mean an affine algebraic set, not necessarily irreducible.
Unless indicated otherwise, all of our algebraic groups and varieties
will be considered over $\mathbb{C}$. In particular, the group $G$
is also a Lie group.

This action induces a natural action of $G$ on the ring $\mathbb{C}[X]$
of regular functions on $X$. The subring of $G$-invariant functions
$\mathbb{C}[X]^{G}$ is finitely generated since $G$ is reductive,
and defines the affine GIT (geometric invariant theory) quotient as
the corresponding affine variety, usually denoted as: \begin{equation}
X\quot G:=\mathrm{Spec}_{max}(\C[X]^{G}).\label{eq:GIT-quot}\end{equation}
 See \cite{Do} or \cite{Muk} for the details of these constructions
and an introduction to GIT. If $x\in X$, we denote by $[x]$ or by
$G\cdot x$ its $G$-orbit in $X$, and by $[\![x]\!]$ its \emph{extended
orbit} in $X$ which is, by definition, the union of the orbits whose
closure intersects $\overline{[x]}=\overline{G\cdot x}$. As shown
in \cite{Do}, $X\quot G$ is exactly the space of classes $[\![x]\!]$.
In the context of GIT, one usually considers the Zariski topology.
However, being interested also in the usual topology on Lie groups,
we will always equip our variety $X$ with a natural embedding into
an affine space $\mathbb{C}^{N}$. This induces on $X$ a natural
Euclidean topology. When we need to distinguish the two kinds of topological
closures, we use a label in the overline; we will mean the Euclidean
topology, when no explicitly reference is made.

We observe that every orbit $[x]=G\cdot x$ is the image of the action
map, which is algebraic, so $[x]$ is constructible and thus contains
a Zariski open subset $O\subset[x]$, which is dense in its Zariski
closure $\overline{[x]}^{Z}$. Hence, $O$ is also open and dense,
in the Euclidean topology, inside $\overline{[x]}^{Z}$. Therefore,
the Euclidean and Zariski closures of orbits coincide: $\overline{[x]}^{E}=\overline{[x]}^{Z}.$

\subsection{The polystable quotient\label{sub:The-polystable-quotient}}

If the orbit of $x$ is closed, $[x]=\overline{[x]}$, we say that
$x$ is \emph{polystable}. Denote the subset of polystable points
by $X^{ps}\subset X$. Since for any $g\in G$, $[g\cdot x]=[x]$,
it is clear that $X^{ps}$ is a $G$-space. It is not, in general,
an affine variety.

We first show that, considering Euclidean topologies, there is a natural
homeomorphism between the \emph{polystable quotient $X^{ps}/G$ }and
the GIT quotient $X\quot G$.

Consider the canonical projection $\pi_{ps}:X^{ps}\to X^{ps}/G$,
and the GIT projection $\pi_{G}:X\to X\quot G$. Define also $\mathcal{I}_{ps}:X^{ps}/G\hookrightarrow X\quot G$
to be the canonical map which sends a $G$-orbit $[x]$ to its extended
$G$-orbit $[\![x]\!]$. These maps form the following diagram, whose
commutativity is clear: \begin{eqnarray}
X^{ps} & \hookrightarrow & X\nonumber \\
\pi_{ps}\downarrow &  & \downarrow\pi_{G}\label{eq:polystable}\\
X^{ps}/G & \stackrel{\mathcal{I}_{ps}}{\longrightarrow} & X\quot G.\nonumber \end{eqnarray}
In \cite{Lu1,Lu2}, $\pi_{G}$ is shown to be a closed mapping on
$G$-invariant Euclidean closed sets (see also \cite{Sch1}, page
141). The following result is implicit in the work of Luna (see \cite{Lu3}).
We provide a proof for completeness, as it will be used later.
\begin{thm}
\label{pro:polystable} Let $G$ be a complex reductive algebraic
group, and $X$ a complex affine $G$-variety $($with the natural
Euclidean topology coming from $X\subset\mathbb{C}^{N})$. Then $\mathcal{I}_{ps}$
is a homeomorphism. \end{thm}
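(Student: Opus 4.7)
The plan is to verify in turn that $\mathcal{I}_{ps}$ is bijective, continuous, and closed; together these give the desired homeomorphism. Bijectivity is the classical GIT statement (see \cite{Do,Muk}) that every extended orbit $[\![x]\!]\in X\quot G$ contains exactly one closed $G$-orbit, so that the rule $[y]\mapsto[\![y]\!]$ is both surjective and injective on the polystable locus. For continuity, the commuting diagram (\ref{eq:polystable}) gives $\mathcal{I}_{ps}\circ\pi_{ps}=\pi_{G}|_{X^{ps}}$, and the right-hand side is Euclidean-continuous as a restriction of the algebraic map $\pi_{G}$; since $\pi_{ps}$ is a topological quotient map by construction of $X^{ps}/G$, the universal property of quotients forces $\mathcal{I}_{ps}$ to be continuous.

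To establish closedness, I would fix a Euclidean-closed $C\subset X^{ps}/G$, set $\tilde{C}:=\pi_{ps}^{-1}(C)\subset X^{ps}$ (which is $G$-invariant and closed in $X^{ps}$), and pass to the Euclidean closure $\overline{\tilde{C}}\subset X$. This closure is still $G$-invariant and is, by definition, Euclidean-closed in $X$, so Luna's theorem \cite{Lu1,Lu2} applies to give that $\pi_{G}(\overline{\tilde{C}})$ is closed in $X\quot G$. The proof then reduces to the set-equality
\[
\pi_{G}(\overline{\tilde{C}})=\pi_{G}(\tilde{C})=\mathcal{I}_{ps}(C),
\]
whose only non-trivial inclusion is $\pi_{G}(\overline{\tilde{C}})\subseteq\pi_{G}(\tilde{C})$, amounting to the claim that for each $x\in\overline{\tilde{C}}$ the unique closed orbit $[y]\subset\overline{G\cdot x}$ already lies in $\tilde{C}$.

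The hard part will be this last implication. Given a sequence $x_{n}\in\tilde{C}$ with $x_{n}\to x$, one only has $\pi_{G}(x_{n})\to\pi_{G}(y)$ in $X\quot G$, which does not immediately produce limits back in $X^{ps}$. My plan is to lift this convergence through a continuous local section $\sigma$ of $\pi_{G}$ into $X^{ps}$, defined on a Euclidean neighborhood of $\pi_{G}(y)$; such sections are supplied either by Luna's \'etale slice theorem applied to the closed orbit $[y]$, or equivalently, in the complex reductive setting, by a Kempf--Ness moment-map construction. For $n$ large, $\sigma(\pi_{G}(x_{n}))$ is defined and lies in $X^{ps}$ with the same $\pi_{G}$-image as $x_{n}$; since $x_{n}\in X^{ps}$, this forces $\sigma(\pi_{G}(x_{n}))\in[x_{n}]\subset\tilde{C}$ by $G$-invariance of $\tilde{C}$. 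Continuity of $\sigma$ yields $\sigma(\pi_{G}(x_{n}))\to\sigma(\pi_{G}(y))\in[y]$ inside $X^{ps}$, and then closedness of $\tilde{C}$ in $X^{ps}$ places $\sigma(\pi_{G}(y))$, and hence $y$, in $\tilde{C}$, completing the argument.
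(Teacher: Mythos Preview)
Your overall strategy matches the paper's exactly: bijectivity from the unique closed orbit in each extended orbit class, continuity via the universal property of the quotient map $\pi_{ps}$, and closedness by passing from $\tilde C=\pi_{ps}^{-1}(C)$ to its Euclidean closure $\overline{\tilde C}$ in $X$, applying the result of \cite{Lu1,Lu2} that $\pi_G$ is closed on $G$-invariant Euclidean-closed sets, and then reducing to the equality $\pi_G(\overline{\tilde C})=\pi_G(\tilde C)$. The divergence, and the gap, is in how you justify this last equality.

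You propose to lift convergence in $X\quot G$ back to $X^{ps}$ through a continuous local section $\sigma:U\to X^{ps}$ of $\pi_G$ near $\pi_G(y)$, asserting that Luna's slice theorem or Kempf--Ness supplies one. Neither does. Luna gives an \'etale map $S\quot G_y\to X\quot G$ from a slice $S$ at $y$, but converting this to a section of $\pi_G$ still requires a section of $S\to S\quot G_y$, which is the same problem for the smaller reductive group $G_y$. Kempf--Ness identifies $\mathcal{KN}/K$ with $X\quot G$ for $\mathcal{KN}\subset X^{ps}$, but a section of $\pi_G$ would then need a continuous local section of the compact-group quotient $\mathcal{KN}\to\mathcal{KN}/K$, and such sections need not exist across strata where isotropy jumps. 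In fact they can fail outright in the setting of the theorem: for $G=\mathbb{C}^*\times\mathbb{Z}/2$ acting on $X=\mathbb{C}^2$ by $(t,\epsilon)\cdot(a,b)=(ta,(-1)^\epsilon t^{-1}b)$ one has $X\quot G\cong\mathbb{C}$ via $(ab)^2$, and an elementary winding-number count on a small circle (the winding number of $(ab)^2$ is even, that of $c$ is one) shows there is no continuous section into $X^{ps}$ on any neighbourhood of $0$. So the step you flag as ``the hard part'' is not merely hard but, as written, unjustified.

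The paper avoids all of this with two lines, using only the GIT fact you already invoked for bijectivity. Given $x\in\overline{\tilde C}$, the unique closed orbit in $[\![x]\!]$ lies in $\overline{[x]}$; pick $x^{ps}$ there. Since $\overline{\tilde C}$ is $G$-invariant and Euclidean-closed and contains $x$, it contains $[x]$ and hence $\overline{[x]}$, so $x^{ps}\in\overline{\tilde C}$. But $x^{ps}\in X^{ps}$ and $\tilde C$ is closed in $X^{ps}$, whence $\overline{\tilde C}\cap X^{ps}=\tilde C$ and $x^{ps}\in\tilde C$, giving $\pi_G(x)=\pi_G(x^{ps})\in\pi_G(\tilde C)$. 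No sections, no sequences---just orbit closures and $G$-invariance.
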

\begin{proof}
With respect to the Euclidean topologies, and the induced topologies
on the quotients, the map $\mathcal{I}_{ps}$ is continuous. This
follows from the identification of $\mathcal{I}_{ps}$ with the composition
of continuous maps \[
\xymatrix{X^{ps}/G\ar[r]^{\iota_{ps}}\ar@/_{2pc}/[rr]_{\mathcal{I}_{ps}} & X/G\ar[r]^{\pi_{\quot}} & X\quot G}
.\]
 We will show that $\mathcal{I}_{ps}$ is a bijection and its inverse
is continuous. It is a standard fact (see \cite{Do,Muk}) that every
extended orbit equivalence class contains a unique closed orbit, and
this closed orbit lies in the closure of all of the others in that
class. In particular, for every $[\![x]\!]\in X\quot G$ there exists
a representative $x^{ps}\in X$ whose orbit is closed and $x^{ps}\in[\![x]\!]$.
So, $x^{ps}\in X^{ps}$ and $[x^{ps}]\in X^{ps}/G$. Thus, $\mathcal{I}_{ps}([x^{ps}])=[\![x]\!]$
which shows $\mathcal{I}_{ps}$ is surjective.

Since two orbits $[x_{1}]$ and $[x_{2}]$ are contained in $[\![x]\!]$
if and only if $\overline{[x_{1}]}\cap\overline{[x_{2}]}\not=\emptyset$,
no two distinct closed orbits are identified in the GIT quotient $X\quot G$.
Thus $\mathcal{I}_{ps}$ is injective.

Finally, let us show that $\mathcal{I}_{ps}$ is a closed map. Suppose
that $C\subset X^{ps}/G$ is a closed set. Then, by definition $C':=\pi_{ps}^{-1}(C)\subset X^{ps}$
is $G$-invariant. By commutativity of the diagram (\ref{eq:polystable})
we have\[
\mathcal{I}_{ps}(C)=\pi_{G}(\pi_{ps}^{-1}(C))=\pi_{G}(C').\]
 Let $C''$ be the Euclidean closure of $C'$ in $X$. By definition
of subspace topologies, since $C'$ is closed in $X^{ps}$, we have
$C'=C''\cap X^{ps}$.

Now let us show that $C''$, the set of limit points of sequences
in $C'$, is $G$-invariant. Let $x_{0}\in C''$. If $x_{0}$ in $C'$,
then clearly $g\cdot x_{0}$ is in $C'\subset C''$. Otherwise, there is a sequence
$\{x_{n}\}$ in $C'$ converging to $x_{0}$. Since $G$ acts by polynomials,
the action is continuous in the Euclidean topology. Therefore, $g\cdot x_{n}$
limits to $g\cdot x_{0},$ where $g\cdot x_{n}$ is in $C'$ for all
$n$ and $g$ since $C'$ is $G$-invariant. Therefore, $g\cdot x_{0}$
is in the limit set and hence in $C''$.

Obviously, $\pi_{G}(C')\subset\pi_{G}(C'')$. Let us show the reverse
inclusion. If $[\![x]\!]\in\pi_{G}(C'')\in X\quot G$ is an extended
orbit for some $x\in C''$, then, as above, there is a $x^{ps}\in X^{ps}$
whose orbit is closed and $x^{ps}\in[\![x]\!]$. In particular, $[\![x^{ps}]\!]=[\![x]\!]$.
This implies $\overline{[x]}\cap\overline{[x^{ps}]}=\overline{[x]}\cap G\cdot x^{ps}\neq\emptyset$,
so there exists $g\in G$ such that $g\cdot x^{ps}\in\overline{[x]}$.
Thus, since $C''$ is $G$-invariant and (Euclidean) closed, $g^{-1}\cdot(g\cdot x^{ps})=x^{ps}\in C''$.
Note that here we are using the fact, observed above, that the Euclidean
and Zariski closures of orbits coincide. Therefore $x^{ps}\in C'=C''\cap X^{ps}$,
and so $[\![x]\!]=[\![x^{ps}]\!]\in\pi_{G}(C')$. We conclude that
indeed $\pi_{G}(C')=\pi_{G}(C'')$.

Since $C''$ is closed, $G$-invariant in $X$ and $\pi_{G}$ is a
closed map for $G$-invariant sets as mentioned above, we conclude
that $\mathcal{I}_{ps}(C)=\pi_{G}(C')=\pi_{G}(C'')$ is a closed set.
This shows that $\mathcal{I}_{ps}$ is a closed map. Being bijective,
it is also an open map. So, the inverse of $\mathcal{I}_{ps}$ is
continuous, and hence $\mathcal{I}_{ps}$ is a homeomorphism. \end{proof}
\begin{rem}
Since $X\quot G$ is a complete metric space, the same holds for $X^{ps}/G$
as well. The metric can be explicitly given as follows. Let $\{f_{1},...,f_{N}\}$
generate the ring of invariants $\C[X]^{G}$, define $F=(f_{1},...,f_{N})$
to be the mapping $X\to\C^{N}$, and let $||\cdot||$ be the Euclidean
metric on $X$. For $[v],[w]\in X/G$ define $d([v],[w])=||F(v)-F(w)||$.
Thus $d$ is well-defined since $F$ is $G$-invariant; and $d$ is
non-negative, symmetric and satisfies the triangle inequality because
$||\cdot||$ does. It is not definite however. Since $d([v],[w])=0$
if the Zariski closure of $[v]$ and $[w]$ intersect (even if they
are not equal orbits), this problem is exactly fixed upon restricting
to $X^{ps}/G$. 
\end{rem}

\subsection{Polystable and compact quotients for character varieties}

Let $\Gamma$ be a finitely generated group, $H$ be a Lie group,
and consider the \emph{representation space} $\hom(\Gamma,H)$, the
space of homomorphisms from $\Gamma$ to $H$. For example, when $\Gamma=\mathsf{F}_{r}$
is a free group on $r$ generators, the evaluation of a representation
on a set of free generators provides a homeomorphism with the Cartesian
product $\hom(\mathsf{F}_{r},H)\cong H^{r}$, where we consider the
compact-open topology on $\hom(\mathsf{F}_{r},H)$ with $\mathsf{F}_{r}$
given the discrete topology.

In general, by choosing generators $\gamma_{1},\cdots,\gamma_{r}$
for $\Gamma$ (for some $r$), we have a natural epimorphism $\mathsf{F}_{r}\twoheadrightarrow\Gamma$.
This allows one to embed $\hom(\Gamma,H)\subset\hom(\mathsf{F}_{r},H)\cong H^{r}$
and consider on $\hom(\Gamma,H)$ the Euclidean topology induced from
the manifold $H^{r}$.

The Lie group $H$ acts on $\hom(\Gamma,H)$ by conjugation of representations;
that is, $h\cdot\rho=h\rho h^{-1}$ for $h\in H$ and $\rho\in\hom(\Gamma,H)$.
Recall that $\hom(\Gamma,H)^{ps}$ denotes the subset of $\hom(\Gamma,H)$
consisting of representations whose $H$-orbit is closed. %Let $\hom(\Gamma,H)/H$ be the quotient space by this action, and
Let $\X_{\Gamma}(H)$ denote the space of closed orbits; that is,
$\hom(\Gamma,H)^{ps}/H$. Although it is not necessarily an algebraic
set, we will refer to this space as the \emph{$H$-character variety
of $\Gamma$}. %the identification space $(\hom(\Gamma,H)/H)/\!\!\sim$,
%where two orbits are equivalent if and only if they are members of
%a chain of orbits whose closures pair-wise intersect.

Three main classes of examples are important for us, for each the
induced topology on $\X_{\Gamma}(H)$ will be Hausdorff. When $H=K$
is a compact Lie group, this is the usual orbit space (which is semi-algebraic
and compact) since all such orbits are closed, so $\X_{\Gamma}(K)=\hom(\Gamma,K)/K$.
When $H=G$ is a reductive algebraic group over $\C$, each orbit
closure has a unique closed sub-orbit and thus corresponds to an extended
$G$-orbit in the sense defined just before Subsection \ref{sub:The-polystable-quotient}(see
for instance \cite{Muk}). So, applying the map $\mathcal{I}_{ps}$
from Theorem \ref{pro:polystable}, the quotient $\X_{\Gamma}(G)$
is identified with the GIT quotient (see \eqref{eq:GIT-quot}): \[
\hom(\Gamma,G)\quot G:=\mathrm{Spec}_{max}(\C[\hom(\Gamma,G)]^{G}).\]
 We record this for future use as a proposition.
\begin{prop}
\label{polystablelemma}Let $G$ be a complex reductive algebraic
group, and $\Gamma$ be a finitely generated group. Then, the natural
map $\mathcal{I}_{ps}:\hom(\Gamma,G)^{ps}/G\to\hom(\Gamma,G)\quot G$
is a homeomorphism. 
\end{prop}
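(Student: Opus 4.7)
The plan is to apply Theorem \ref{pro:polystable} directly, with $X=\hom(\Gamma,G)$. Essentially all the heavy lifting is already done; the work reduces to verifying that $\hom(\Gamma,G)$ fits the hypotheses of that theorem, namely that it is a complex affine $G$-variety with a natural Euclidean topology coming from an embedding into some $\mathbb{C}^{N}$.

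First I would fix a finite generating set $\gamma_{1},\ldots,\gamma_{r}$ for $\Gamma$, inducing a surjection $\mathsf{F}_{r}\twoheadrightarrow\Gamma$ and an embedding $\hom(\Gamma,G)\hookrightarrow\hom(\mathsf{F}_{r},G)\cong G^{r}$. Each word $w\in\mathsf{F}_{r}$ lying in the kernel of this surjection defines a regular morphism $w:G^{r}\to G$, and $\hom(\Gamma,G)$ is exactly the intersection of the preimages $w^{-1}(e)$ as $w$ ranges over this kernel. Because $G^{r}$ is an affine variety with Noetherian coordinate ring, this intersection is Zariski closed, and so $\hom(\Gamma,G)$ is a closed subvariety of $G^{r}$. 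The conjugation action $G\times G^{r}\to G^{r}$ is a morphism and restricts to an algebraic action on $\hom(\Gamma,G)$. Fixing a closed embedding $G\hookrightarrow\mathbb{C}^{M}$ (available since $G$ is reductive, hence linear algebraic) we obtain a closed embedding $\hom(\Gamma,G)\hookrightarrow G^{r}\hookrightarrow\mathbb{C}^{Mr}$ and the induced Euclidean topology agrees with the one described in the preceding paragraphs of the paper.

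Second, the identification $\X_{\Gamma}(G)=\hom(\Gamma,G)\quot G$ has already been established just before the statement of the proposition. Combining this identification with the previous paragraph, all hypotheses of Theorem \ref{pro:polystable} are in place, and the theorem applied to $X=\hom(\Gamma,G)$ yields the desired homeomorphism $\mathcal{I}_{ps}:\hom(\Gamma,G)^{ps}/G\to\X_{\Gamma}(G)$.

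Since the proposition is essentially a specialization of Theorem \ref{pro:polystable}, there is no real obstacle. The only subtlety worth noting is the compatibility of the three topologies available on $\hom(\Gamma,G)$: the Euclidean topology from the embedding in $\mathbb{C}^{Mr}$, the subspace topology inherited from $G^{r}$ with its Lie group topology, and the compact-open topology from $\hom(\Gamma,G)\subset\hom(\mathsf{F}_{r},G)$ with $\Gamma$ discrete. A routine check shows these coincide, after which the proposition follows with no further argument.
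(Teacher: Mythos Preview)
Your proposal is correct and matches the paper's approach exactly: the paper simply states that the proposition follows by applying Theorem~\ref{pro:polystable} to the affine $G$-variety $X=\hom(\Gamma,G)$, without even writing out a separate proof. Your verification that $\hom(\Gamma,G)$ is indeed a closed affine $G$-subvariety of $G^{r}\subset\mathbb{C}^{Mr}$ is more detailed than what the paper provides, but entirely in line with its intent.
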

Lastly, in the case that $H=G(\R)$, the real points of $G$, then
by \cite{RS}, the polystable quotient is a semi-algebraic space.
To distinguish it from the complex case, we will denote the maximal
compact subgroup of $G(\R)$ by $K(\R)$.

In all cases, the spaces $\X_{\Gamma}(H)$ will be semi-algebraic
sets and thus CW-complexes in the natural Euclidean topologies we
consider in this paper.

Let now $K$ be a fixed maximal compact subgroup of a complex reductive
algebraic group $G$. Then $G$ is the Zariski closure of $K$. Over
$\C$, all reductive algebraic groups arise as the Zariski closure
of compact Lie groups . In particular, as discussed in \cite{Sch1},
we may assume $K\subset\mathsf{O}(n,\R)$ is a real affine variety
by the Peter-Weyl theorem, and thus $G\subset\mathsf{O}(n,\C)$ is
the complex points of the real variety $K$.

Using the fact that $G$ is also isomorphic to the unique complexification
$K_{\mathbb{C}}$ of $K$, one can show the following.
\begin{prop}
\label{kmodsubsetgmod} Let $\Gamma$ be a finitely generated group
and $G=K_{\mathbb{C}}$. Then the inclusion mapping $\hom(\Gamma,K)\hookrightarrow\hom(\Gamma,G)$
induces an injective mapping \[
\iota:\hom(\Gamma,K)/K\hookrightarrow\hom(\Gamma,G)\quot G\]
 such that $\iota(\hom(\Gamma,K)/K)$ is a CW-subcomplex of $\hom(\Gamma,G)\quot G$. \end{prop}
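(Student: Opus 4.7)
The plan is to factor $\iota$ through the polystable quotient and then reduce the three claims (well-definedness, injectivity, CW-subcomplex) to classical Kempf--Ness type statements plus a triangulation argument.

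First I would observe that the inclusion $\hom(\Gamma,K)\hookrightarrow\hom(\Gamma,G)$ carries every representation $\rho\colon\Gamma\to K$ to a \emph{polystable} point of $\hom(\Gamma,G)$. Indeed, since $K$ is compact, the Zariski closure of $\rho(\Gamma)$ inside $G$ is a complex reductive subgroup (the complexification of the compact group $\overline{\rho(\Gamma)}^{K}$), so by the standard characterization of closed orbits for reductive group actions, the $G$-orbit $G\cdot\rho\subset\hom(\Gamma,G)$ is closed. Composing the inclusion with the projection and Proposition \ref{polystablelemma}, I obtain a continuous map
\[
\iota\colon\hom(\Gamma,K)/K\ \longrightarrow\ \hom(\Gamma,G)^{ps}/G\ \stackrel{\mathcal{I}_{ps}}{\cong}\ \hom(\Gamma,G)\quot G,
\]
which is well defined because $K\subset G$ so $K$-orbits are contained in $G$-orbits.

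For injectivity, suppose $\rho_{1},\rho_{2}\in\hom(\Gamma,K)$ and $g\rho_{1}g^{-1}=\rho_{2}$ for some $g\in G$; I must produce $k\in K$ with $k\rho_{1}k^{-1}=\rho_{2}$. Using the Cartan decomposition $G=K\cdot\exp(i\mathfrak{k})$, write $g=k_{0}\exp(iX)$ with $k_{0}\in K$ and $X\in\mathfrak{k}$, and set $\rho_{1}':=k_{0}\rho_{1}k_{0}^{-1}\in\hom(\Gamma,K)$; the hypothesis becomes $\exp(iX)\,\rho_{1}'\,\exp(-iX)=\rho_{2}$. Both $\rho_{1}'$ and $\rho_{2}$ take values in the compact group $K$, and conjugation by the single non-compact element $\exp(iX)$ must send one $K$-valued tuple to another. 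A short argument using the fact that $K$ is the fixed-point set of the Cartan involution (or equivalently, the zero level set of the norm-squared moment map for the $K$-action on $\hom(\Gamma,G)$) then forces $\exp(iX)$ to centralize $\rho_{1}'$, so $\rho_{2}=\rho_{1}'=k_{0}\rho_{1}k_{0}^{-1}$. This is the only genuinely non-formal step, and it is the main obstacle; the cleanest reference is the standard Kempf--Ness principle, which guarantees that two points of a common moment-zero fibre that lie in the same $G$-orbit are already in the same $K$-orbit.

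Finally, to show that $\iota(\hom(\Gamma,K)/K)$ is a CW-subcomplex of $\X_{\Gamma}(G)$, I would first note that $\hom(\Gamma,K)\subset K^{r}$ is compact and real semialgebraic (cut out by the relations of $\Gamma$ via matrix entries), so $\hom(\Gamma,K)/K$ is a compact semialgebraic set. Continuity of $\iota$ together with compactness of its source and the Hausdorff property of $\X_{\Gamma}(G)$ shows that the image is closed. Since the inclusion $K^{r}\hookrightarrow G^{r}$ and the GIT quotient map are all semialgebraic, the pair $\bigl(\X_{\Gamma}(G),\,\iota(\X_{\Gamma}(K))\bigr)$ is a closed pair of semialgebraic sets, and the semialgebraic triangulation theorem furnishes a triangulation of $\X_{\Gamma}(G)$ in which the image is a subcomplex. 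This upgrades the natural CW-structure already mentioned in the previous subsection to one in which $\iota(\hom(\Gamma,K)/K)$ is a CW-subcomplex, completing the proof.
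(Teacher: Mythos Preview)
Your proposal is correct, and the underlying mathematics is the same as what the paper relies on, but the paper's own proof is a one-line citation: it observes that $\hom(\Gamma,K)$ is the set of real points of the affine variety $\hom(\Gamma,G)$, is compact, and is $K$-stable, and then invokes \cite[Thm.~4.3]{FlLa3} as a black box to obtain both the injectivity and the CW-subcomplex statement. What you have written is essentially an unpacking of that cited theorem: the Kempf--Ness principle (two moment-zero points in the same $G$-orbit are $K$-conjugate) gives the injectivity, and the semialgebraic triangulation theorem for the pair $(\X_\Gamma(G),\iota(\X_\Gamma(K)))$ gives the subcomplex structure.

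Two minor remarks. First, your polystability observation duplicates Lemma~\ref{lem:K-in-ps}, which in the paper is proved \emph{after} Proposition~\ref{kmodsubsetgmod}; this is not a logical problem since you argue it independently, but be aware of the ordering. Second, the sentence ``a short argument\ldots forces $\exp(iX)$ to centralize $\rho_1'$'' is the only place where you are genuinely sketchy; you correctly identify the Kempf--Ness principle as the cleanest justification, but as written the Cartan-decomposition manipulation alone does not obviously yield centralization without invoking convexity of the norm function along $t\mapsto\exp(itX)\cdot\rho_1'$. If you want a fully self-contained argument, spell that convexity step out; otherwise your citation of Kempf--Ness is adequate and matches what the referenced theorem in \cite{FlLa3} does.
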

\begin{proof}
Since $G$ is the complex points of the real variety $K$, the real
points of $G$ coincide with $K$. In the same way, the set of real
points of the affine variety $\hom(\Gamma,G)\subset G^{r}$ is precisely
$\hom(\Gamma,K)$. Since $\hom(\Gamma,K)$ is compact and stable under
$K$, the result is a direct consequence of \cite[Thm. 4.3]{FlLa3}. 
\end{proof}
Given a representation $\rho\in\hom(\Gamma,G)$, the subset $\rho(\Gamma)=\{\rho(\gamma),\ \gamma\in\Gamma\}\subset G$
is the group algebraically generated by $\rho(\gamma_{1}),\cdots,\rho(\gamma_{r})$,
where $\gamma_{1},\cdots,\gamma_{r}$ are the generators of $\Gamma$.
\begin{lem}
\label{lem:K-in-ps}Let $\Gamma$ be a finitely generated group and
$G=K_{\mathbb{C}}$, Then $\hom(\Gamma,K)\subset\hom(\Gamma,G)^{ps}$.\end{lem}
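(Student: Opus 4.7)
The plan is to apply the classical criterion characterizing closed orbits in representation spaces: for any finitely generated group $\Gamma$ and complex reductive algebraic group $G$, a representation $\rho \in \hom(\Gamma,G)$ has closed $G$-orbit if and only if the Zariski closure of its image $\rho(\Gamma)$ inside $G$ is a reductive algebraic subgroup. With this in hand the lemma reduces to showing that for every $\rho \in \hom(\Gamma,K)$ the Zariski closure $Z := \overline{\rho(\Gamma)}^{Z} \subset G$ is reductive.

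To establish the reductivity of $Z$, I would first pass to the Euclidean closure $L \subset K$ of the subgroup of $K$ generated by $\rho(\Gamma)$. Since $K$ is a compact Lie group, $L$ is itself a compact Lie subgroup of $K$ containing $\rho(\Gamma)$. A brief verification shows that $L$ and $\rho(\Gamma)$ have the same Zariski closure in $G$: the inclusion $Z \subset \overline{L}^{Z}$ is immediate from $\rho(\Gamma) \subset L$, while conversely $Z \cap K$ is Euclidean closed (Zariski-closed sets being Euclidean closed) and contains $\rho(\Gamma)$, hence contains $L$, so $L \subset Z$. One now invokes the standard fact that the Zariski closure of a compact Lie subgroup $L \subset K$ in $G = K_\mathbb{C}$ is the complexification $L_\mathbb{C}$, a complex reductive algebraic group. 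Therefore $Z = L_\mathbb{C}$ is reductive.

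Combining the two steps, the orbit-closure criterion gives $\rho \in \hom(\Gamma,G)^{ps}$, completing the proof. The main (and rather minor) obstacle is purely the careful invocation of the two classical inputs: the polystability criterion via reductive Zariski closure, and the identification of the Zariski closure of a compact subgroup with its complexification. Both are standard and apply directly in the concrete setting $K \subset \mathsf{O}(n,\mathbb{R})$, $G \subset \mathsf{O}(n,\mathbb{C})$ recalled earlier in this section. Note also that no Abelian hypothesis on $\Gamma$ is required here, only that $\Gamma$ be finitely generated, so that $\hom(\Gamma,G)$ is an affine $G$-variety to which GIT applies.
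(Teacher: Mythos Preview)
Your proof is correct and follows essentially the same route as the paper: both pass to the Euclidean closure of $\rho(\Gamma)$ inside $K$ (a compact Lie subgroup), identify its Zariski closure with the complexification, and then invoke Richardson's criterion that the $G$-orbit is closed if and only if the Zariski closure of the image is (linearly) reductive. The only difference is cosmetic---you spell out the two inclusions showing $\overline{\rho(\Gamma)}^{Z}=\overline{L}^{Z}$, whereas the paper simply asserts this.
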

\begin{proof}
Suppose $\rho\in\hom(\Gamma,K)$. Then the Euclidean closure of $\rho(\Gamma)$
in $K$ is a compact subgroup $J$ of $K$. This implies that the
Zariski closure of $\rho(\Gamma)$ coincides with the Zariski closure
of $J$ in $G$ and hence it is a reductive algebraic group (precisely
equal to the complexification of $J$). Thus, $\overline{\rho(\Gamma)}^{Z}$
is a linearly reductive group (all its linear representations are
completely reducible). In \cite[Thm. 3.6]{Ri88}, it is shown that
$\overline{\rho(\Gamma)}^{Z}$ is linearly reductive if and only if
the $G$-orbit of $\rho$ in $\hom(\Gamma,G)$ is closed. We conclude
$G\cdot\rho$ is closed, that is, $\rho\in\hom(\Gamma,G)^{ps}$. We
note that Richardson's result in \cite{Ri88} is stated for $r$-tuples
of elements in $G$. The case of closed $G$-invariant subsets of
$G^{r}$, such as our $\hom(\Gamma,G)$, is an easy consequence (see
also \cite{CaFl}). 
\end{proof}
Define the mapping $\iota_{K}$ as the composition \[
\hom(\Gamma,K)/K\hookrightarrow\hom(\Gamma,G)^{ps}/K\twoheadrightarrow\hom(\Gamma,G)^{ps}/G.\]

We want now to describe the image of $\iota_{K}$ .
\begin{prop}
\label{diagramprop} The following diagram is commutative: \[
\xymatrix{ & \hom(\Gamma,G)/G\ar@{->>}[dr]^{\pi_{\mathrm{\quot}}}\\
\hom(\Gamma,G)^{ps}/G\ar@{^{(}->}[ur]^{\iota_{ps}}\ar[rr]^{\mathcal{I}_{ps}} &  & \hom(\Gamma,G)\quot G\\
 & \hom(\Gamma,K)/K\ar@{^{(}->}[ul]_{\iota_{K}}\ar@{^{(}->}[ur]^{\iota}}
\]
 Consequently, $\iota_{K}(\X_{\Gamma}(K))\cong\iota(\X_{\Gamma}(K))$
as CW complexes. \end{prop}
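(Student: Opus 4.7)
The plan is to verify commutativity by a direct diagram chase on a representative element, and then to read off the topological/CW-theoretic consequence from the fact that $\mathcal{I}_{ps}$ is a homeomorphism. Nothing deeper than unpacking definitions is really needed, because all the substantive work has been done in Theorem \ref{pro:polystable}, Proposition \ref{polystablelemma}, Proposition \ref{kmodsubsetgmod}, and Lemma \ref{lem:K-in-ps}.

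First I would check the upper triangle. Pick $[\rho]\in\hom(\Gamma,G)^{ps}/G$. The map $\iota_{ps}$ is just the tautological inclusion of polystable $G$-orbits into all $G$-orbits, and $\pi_{\quot}$ sends a $G$-orbit $[\rho]$ to its extended orbit $[\![\rho]\!]$. So $\pi_{\quot}\circ\iota_{ps}([\rho])=[\![\rho]\!]$, which is exactly $\mathcal{I}_{ps}([\rho])$ by definition. Next, the lower triangle. Pick $[\rho]_{K}\in\hom(\Gamma,K)/K$. By Lemma \ref{lem:K-in-ps}, $\rho\in\hom(\Gamma,G)^{ps}$, so $\iota_{K}([\rho]_{K})$ is well-defined as the $G$-orbit of $\rho$ in $\hom(\Gamma,G)^{ps}/G$, and applying $\mathcal{I}_{ps}$ gives $[\![\rho]\!]\in\X_{\Gamma}(G)$. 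On the other side, $\iota$ from Proposition \ref{kmodsubsetgmod} sends $[\rho]_{K}$ to the extended $G$-orbit $[\![\rho]\!]$ as well. So $\mathcal{I}_{ps}\circ\iota_{K}=\iota$, establishing commutativity.

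For the consequence, the identity $\iota=\mathcal{I}_{ps}\circ\iota_{K}$ just proved shows that $\mathcal{I}_{ps}$ maps $\iota_{K}(\X_{\Gamma}(K))$ onto $\iota(\X_{\Gamma}(K))$. Since $\mathcal{I}_{ps}$ is a homeomorphism by Proposition \ref{polystablelemma}, its restriction yields a homeomorphism between these two images. By Proposition \ref{kmodsubsetgmod}, $\iota(\X_{\Gamma}(K))$ is a CW-subcomplex of $\X_{\Gamma}(G)$; pulling this CW structure back through $\mathcal{I}_{ps}^{-1}$ equips $\iota_{K}(\X_{\Gamma}(K))$ with a CW structure with respect to which the two are isomorphic as CW complexes.

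There is really no hard step here; the only point requiring care is noting that $\iota_{K}$ is well-defined and injective, which follows because Lemma \ref{lem:K-in-ps} places $\hom(\Gamma,K)$ inside $\hom(\Gamma,G)^{ps}$ and because $\iota=\mathcal{I}_{ps}\circ\iota_{K}$ is already known to be injective (Proposition \ref{kmodsubsetgmod}) with $\mathcal{I}_{ps}$ a homeomorphism. Thus the proposition is entirely formal given the earlier groundwork.
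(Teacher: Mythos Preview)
Your proof is correct and follows essentially the same approach as the paper's: both verify the two triangles by unpacking the definitions of the maps, invoke Lemma \ref{lem:K-in-ps} to ensure $\iota_{K}$ lands in the polystable quotient, and deduce injectivity of $\iota_{K}$ from that of $\iota$ together with the bijectivity of $\mathcal{I}_{ps}$. If anything, you are slightly more explicit than the paper about the CW-isomorphism consequence, spelling out that it comes from restricting the homeomorphism $\mathcal{I}_{ps}$ and transporting the CW structure.
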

\begin{proof}
Since all mappings in the diagram are composites of natural inclusions
and projections, they are continuous. The top triangle of maps is
commutative by definition of $\mathcal{I}_{ps}$. Note that $\iota$
is the cellular inclusion from Proposition \ref{kmodsubsetgmod}.
Then Proposition \ref{kmodsubsetgmod} implies that all $G$-equivalent
$K$-valued representations are in fact $K$-equivalent (else $\iota$
would not be injective). Therefore, since $\hom(\Gamma,K)\subset\hom(\Gamma,G)^{ps}$,
we conclude that $\iota_{K}$ is also injective. Therefore, the bottom
triangle of maps is commutative. 
\end{proof}
Now define \[
G(\hom(\Gamma,K)):=\{g\rho g^{-1}\ |\ g\in G\text{ and}\ \rho\in\hom(\Gamma,K)\}.\]
 Clearly, $G(\hom(\Gamma,K))\subset\hom(\Gamma,G)^{ps}$ as a $G$-subspace
since all $K$-representations have closed orbits by Lemma \ref{lem:K-in-ps},
and conjugates of representations with closed $G$-orbits likewise
have closed $G$-orbits (since $G\cdot(g\rho g^{-1})=G\cdot\rho$).
\begin{prop}
\label{gkembeds} $\iota_{K}:\hom(\Gamma,K)/K\to\hom(\Gamma,G)^{ps}/G$
is an embedding and $\iota_{K}\left(\hom(\Gamma,K)/K\right)=G(\hom(\Gamma,K))/G$. \end{prop}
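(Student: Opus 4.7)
The plan is to bootstrap everything from the commutative diagram in Proposition \ref{diagramprop} together with the fact that two of the sides of that diagram are already known to be well-behaved: $\iota$ is a cellular embedding (Proposition \ref{kmodsubsetgmod}) and $\mathcal{I}_{ps}$ is a homeomorphism (Proposition \ref{polystablelemma}). The proposition has two assertions to check: that $\iota_{K}$ is a topological embedding, and that its image coincides set-theoretically with $G(\hom(\Gamma,K))/G$ sitting inside $\hom(\Gamma,G)^{ps}/G$.

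First I would handle the embedding claim. By the commutativity of the bottom triangle in Proposition \ref{diagramprop}, one has the identity $\iota = \mathcal{I}_{ps}\circ \iota_{K}$. Since $\mathcal{I}_{ps}$ is a homeomorphism, this identity rearranges to $\iota_{K} = \mathcal{I}_{ps}^{-1}\circ \iota$. Now $\iota$ is an embedding (indeed it realizes $\hom(\Gamma,K)/K$ as a CW-subcomplex of $\hom(\Gamma,G)\quot G$), and a homeomorphism composed with an embedding is again an embedding. Thus $\iota_{K}$ is an embedding of topological spaces. Note that injectivity, which is the nontrivial ingredient here, is inherited from the injectivity of $\iota$ already established in Proposition \ref{kmodsubsetgmod}; the argument above packages this together with continuity of the inverse on the image.

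Next I would identify the image. By construction, if $\rho\in\hom(\Gamma,K)$ then $\iota_{K}([\rho]_{K}) = [\rho]_{G}$, the $G$-orbit of $\rho$ viewed as an element of $\hom(\Gamma,G)^{ps}$ (we use Lemma \ref{lem:K-in-ps} to know $\rho \in \hom(\Gamma,G)^{ps}$). Therefore the image of $\iota_K$ is exactly the collection of $G$-orbits in $\hom(\Gamma,G)^{ps}$ that meet $\hom(\Gamma,K)$. I then unpack the definition
\[
G(\hom(\Gamma,K)) = \{g\rho g^{-1} : g\in G,\ \rho\in\hom(\Gamma,K)\} = \bigcup_{\rho\in\hom(\Gamma,K)} G\cdot\rho,
\]
from which $G(\hom(\Gamma,K))/G$ is precisely the set of $G$-orbits of the form $G\cdot\rho$ with $\rho\in\hom(\Gamma,K)$. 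This is the same set as the image of $\iota_K$, giving the claimed equality.

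I do not anticipate a genuine obstacle here, since all the work has been done in the preceding propositions; the main point is to verify that the diagram in Proposition \ref{diagramprop} commutes in the right way to let one transport the topological embedding property from $\iota$ back to $\iota_K$ through the homeomorphism $\mathcal{I}_{ps}$. The most delicate conceptual point is making sure one records that $\hom(\Gamma,K)\subset\hom(\Gamma,G)^{ps}$ (Lemma \ref{lem:K-in-ps}) so that the map $\iota_K$ genuinely lands in the polystable quotient, and that $G(\hom(\Gamma,K))\subset\hom(\Gamma,G)^{ps}$ as a $G$-invariant subspace, so that the right-hand side of the image equality actually makes sense as a subset of $\hom(\Gamma,G)^{ps}/G$.
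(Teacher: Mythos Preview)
Your proof is correct and essentially uses the same ingredients as the paper's argument, with one small difference in strategy for the embedding claim. The paper establishes that $\iota_K$ is a continuous injection (via Proposition \ref{diagramprop}), then invokes the general fact that a continuous injection from a compact space ($\hom(\Gamma,K)/K$) into a Hausdorff space ($\hom(\Gamma,G)^{ps}/G$, Hausdorff via Proposition \ref{polystablelemma}) is automatically an embedding. You instead write $\iota_K=\mathcal{I}_{ps}^{-1}\circ\iota$ and transport the embedding property directly through the homeomorphism $\mathcal{I}_{ps}$. Both arguments work; yours is a bit slicker but relies on knowing that $\iota$ is an actual topological embedding (not just an injection with CW-subcomplex image), which is true by the same compact-to-Hausdorff reasoning, so the content is the same either way. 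Your identification of the image with $G(\hom(\Gamma,K))/G$ matches the paper's computation.
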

\begin{proof}
Proposition \ref{diagramprop} implies that $\iota_{K}$ is a continuous
injection. We now show it is onto $G(\hom(\Gamma,K))/G$. First, let
$[\rho]_{K}\in\X_{\Gamma}(K)$. For any $\rho'\in\iota_{K}([\rho]_{K})$,
$\rho'$ is a $G$-conjugate of a $K$-conjugate of $\rho$ and so
$\rho'=g(k\rho k^{-1})g^{-1}\in G(\hom(\Gamma,K))$; thus $\iota_{K}([\rho]_{K})\in G(\hom(\Gamma,K))/G$.
Conversely, let $[\rho]_{G}\in G(\hom(\Gamma,K))/G$. Then by definition,
there exists $g\in G$ such that $g\rho g^{-1}\in\hom(\Gamma,K)$.
Thus $[g\rho g^{-1}]_{K}\in\X_{\Gamma}(K)$ and $\iota_{K}([g\rho g^{-1}]_{K})=[g\rho g^{-1}]_{G}=[\rho]_{G}$;
we conclude $G(\hom(\Gamma,K))/G\subset\iota_{K}(\X_{\Gamma}(K))$.
Therefore, $\iota_{K}(\X_{\Gamma}(K))=G(\hom(\Gamma,K))/G.$ From
Proposition \ref{polystablelemma}, $\hom(\Gamma,G)^{ps}/G\cong\hom(\Gamma,G)\quot G$
and so is Hausdorff (the latter is an algebraic subset of some $\mathbb{C}^{N}$,
with the Euclidean subspace topology). On the other hand, $\X_{\Gamma}(K)$
is compact (a closed subset of the compact $K^{N}$ is a compact set,
and a compact quotient of a compact space is compact). Since a continuous
injection from a compact space to a Hausdorff space is an embedding,
we are done. \end{proof}
\begin{rem}
\label{realcase:step1} With respect to Proposition \ref{diagramprop},
the mapping $\iota_{K}$ remains continuous when $K$ is replaced
by $K(\R)$ since that property does not depend on $\C$, it additionally
remains injective since the proof of \cite[Remark 4.7]{FlLa3} replies
only on the polar decomposition of $G$ in terms of $K$ which remains
valid for $G(\R)$ and $K(\R)$. Therefore, the proof of Proposition
\ref{gkembeds} remains true if $G$ is replaced by $G(\R)$ and $K$
is replaced by $K(\R)$. 
\end{rem}
We record the following fact for later use. 
\begin{prop}
\label{gproduct} $\X_{\Gamma}(G\times H)\cong\X_{\Gamma}(G)\times\X_{\Gamma}(H)$
for any reductive algebraic $\C$-groups $G,H$. \end{prop}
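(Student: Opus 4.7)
The plan is to reduce the proposition to the standard GIT fact that quotients commute with products, after identifying representation varieties into a product group with the product of representation varieties.

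First I would observe that the universal property of the direct product of groups gives a natural bijection
\[
\hom(\Gamma, G\times H) \;\longleftrightarrow\; \hom(\Gamma, G) \times \hom(\Gamma, H),
\]
sending $\rho$ to $(\mathrm{pr}_G\!\circ\rho,\, \mathrm{pr}_H\!\circ\rho)$. Choosing generators $\gamma_1,\ldots,\gamma_r$ of $\Gamma$ realizes both sides as closed subvarieties of $(G\times H)^r \cong G^r\times H^r$, and the bijection is the evident isomorphism of affine varieties inherited from this embedding; in particular it is a homeomorphism in the Euclidean topologies used throughout the paper.

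Next I would check equivariance. The conjugation action of $G\times H$ on $\hom(\Gamma, G\times H)$ decomposes coordinate-wise: $(g,h)\cdot\rho = (g,h)\rho(g,h)^{-1}$, and under the identification above this becomes the product action $(g,h)\cdot(\rho_G,\rho_H) = (g\rho_G g^{-1},\, h\rho_H h^{-1})$, where $G$ acts only on the first factor and $H$ only on the second. Hence the isomorphism is $(G\times H)$-equivariant for the product action.

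Finally I would apply the standard fact that GIT quotients commute with products of reductive group actions. At the level of rings, since $G$ acts trivially on $\C[\hom(\Gamma,H)]$ and $H$ acts trivially on $\C[\hom(\Gamma,G)]$, one has
\[
\bigl(\C[\hom(\Gamma,G)]\otimes_\C \C[\hom(\Gamma,H)]\bigr)^{G\times H} \;=\; \C[\hom(\Gamma,G)]^{G}\otimes_\C \C[\hom(\Gamma,H)]^{H},
\]
and $\mathrm{Spec}_{max}$ of a tensor product of finitely generated $\C$-algebras is the product of the $\mathrm{Spec}_{max}$'s. Combining with the identification of $\X_\Gamma(-)$ with the GIT quotient from \eqref{eq:GIT-quot} (as reviewed in Section 2), this yields $\X_\Gamma(G\times H)\cong \X_\Gamma(G)\times \X_\Gamma(H)$.

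There is no substantive obstacle: the only thing to be careful about is keeping track of topologies and making sure the tensor-product identity for invariants is applied correctly (this is where the triviality of each factor's action on the other factor's functions enters). Everything else is formal from the universal property of products and the compatibility of $\mathrm{Spec}_{max}$ with products of affine $\C$-varieties.
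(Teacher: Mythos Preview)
Your proof is correct and follows essentially the same outline as the paper's: identify $\hom(\Gamma,G\times H)\cong\hom(\Gamma,G)\times\hom(\Gamma,H)$ as affine varieties, observe that the $G\times H$-action splits into independent actions on the factors, and conclude that the GIT quotients decompose accordingly. The only difference is in how the last step is justified: you argue algebraically via the invariant-ring identity $(\C[\hom(\Gamma,G)]\otimes\C[\hom(\Gamma,H)])^{G\times H}=\C[\hom(\Gamma,G)]^G\otimes\C[\hom(\Gamma,H)]^H$ and the compatibility of $\mathrm{Spec}_{max}$ with products, whereas the paper argues geometrically by noting that $\overline{(G\times H)\cdot\rho}=\overline{G\cdot\rho_G}\times\overline{H\cdot\rho_H}$, so the extended orbits (hence GIT points) factor. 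Both routes are standard and equally short; yours is closer to the definition \eqref{eq:GIT-quot}, the paper's is closer to the polystable-quotient description set up earlier in Section~2.
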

\begin{proof}
Write $\rho\in\hom(\Gamma,G\times H)$ as $\rho=(\rho_{G},\rho_{H})$.
Clearly, $\hom(\Gamma,G\times H)\cong\hom(\Gamma,G)\times\hom(\Gamma,H)$
and under this identification, the $G\times H$-action separates into
an independent $G$-action on $\hom(\Gamma,G)$ and $H$-action on
$\hom(\Gamma,H)$. Thus, as orbit spaces $\hom(\Gamma,G\times H)/(G\times H)\cong\hom(\Gamma,G)/G\times\hom(\Gamma,H)/H$.
Moreover, since the GIT quotients are determined by orbit closures,
we conclude our result simply by noting that $\overline{(G\times H)\cdot\rho}=\overline{(G\cdot\rho_{G})\times(H\cdot\rho_{H})}=\overline{G\cdot\rho_{G}}\times\overline{H\cdot\rho_{H}}.$ 
\end{proof}

\section{Finitely generated Abelian groups}

From now on, we let $\Gamma$ be a finitely generated Abelian group.
By the classification of finitely generated Abelian groups, there
are integers $s,t\geq0$ and $n_{1},\cdots,n_{t}>1$ such that \[
\Gamma\cong\Z^{s}\oplus\bigoplus_{i=1}^{t}\Z_{n_{i}},\]
 where $\mathbb{Z}_{m}$ denotes the cyclic group $\mathbb{Z}/m\mathbb{Z}$.
In general, $\hom(\Gamma,G)$ is an algebraic subvariety of $G^{s+t}$,
given by $\rho\mapsto(\rho(\gamma_{1}),...,\rho(\gamma_{s+t}))$ where
$\{\gamma_{1},\cdots,\gamma_{s}\}$ generate $\mathbb{Z}^{s}$ and
$\gamma_{s+j}$ is a generator of $\mathbb{Z}_{n_{j}}$ for $j=1,\cdots,t$.
Recall that an element in $G$ is called \textit{semisimple} if for
any finite dimensional rational representation of $G$ the element
$g$ acts completely reducibly. Let $G_{ss}$ denote the set of semisimple
elements in $G$. Then define \[
\hom(\Gamma,G_{ss})=\{\rho\in\hom(\Gamma,G)\ |\ \rho(\gamma_{j})\in G_{ss},\quad j=1,\cdots,s+t\}.\]
 This is an abuse of notation (since $G_{ss}$ is not a group) but
a harmless one, in view of the next result. Since $G_{ss}$ is preserved
by conjugation, $G$ acts on $\hom(\Gamma,G_{ss})$ by simultaneous
conjugation. In what follows we will often abbreviate $r=s+t$. Recall
that a diagonalizable group is an algebraic group isomorphic to a
closed subgroup of a torus (see \cite{Bo}).
\begin{prop}
\label{semisimplelemma} Let $\Gamma$ be a finitely generated Abelian
group and let $H_{\rho}=\overline{\rho(\Gamma)}^{Z}$ be the Zariski
closure of $\rho(\Gamma)\subset G$. Then, the following are equivalent:

$(1)$ $\rho\in\hom(\Gamma,G_{ss})$

$(2)$ $\rho\in\hom(\Gamma,G)^{ps}$

$(3)$ $H_{\rho}$ is a diagonalizable group

$(4)$ $H_{\rho}$ is a reductive group

In particular, $\hom(\Gamma,G)^{ps}/G=\hom(\Gamma,G_{ss})/G$. \end{prop}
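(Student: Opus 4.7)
The plan is to prove the four conditions in a cycle, say $(1)\Rightarrow(3)\Rightarrow(4)\Rightarrow(2)\Rightarrow(1)$, after which the final assertion follows immediately by passing to $G$-orbit spaces in the identity $\hom(\Gamma,G)^{ps}=\hom(\Gamma,G_{ss})$ supplied by $(1)\Leftrightarrow(2)$; both subsets are $G$-invariant since conjugation preserves semisimplicity and preserves closedness of orbits. A basic observation used throughout is that $\rho(\Gamma)$ is commutative (because $\Gamma$ is abelian), so its Zariski closure $H_{\rho}$ is a commutative algebraic subgroup of $G$, since the group operations are morphisms.

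For $(3)\Leftrightarrow(4)$, I would invoke the structure theory of commutative linear algebraic groups (see \cite{Bo}): a diagonalizable group is by definition a closed subgroup of a torus, hence reductive; conversely, any commutative linear algebraic group decomposes as a direct product of its semisimple and unipotent parts, and reductivity forces the unipotent part to vanish, leaving a group of commuting semisimple elements sitting in a torus. For $(2)\Leftrightarrow(4)$, I would simply quote Richardson's criterion \cite[Thm.~3.6]{Ri88}, already applied in the proof of Lemma \ref{lem:K-in-ps}: the $G$-orbit of $\rho$ is closed if and only if $H_{\rho}$ is linearly reductive, which over $\C$ coincides with being reductive.

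The remaining equivalence $(1)\Leftrightarrow(3)$ is where the hypothesis that $\Gamma$ is abelian becomes essential. The direction $(3)\Rightarrow(1)$ is immediate, since every element of a diagonalizable group is semisimple. For $(1)\Rightarrow(3)$, I would fix a faithful rational embedding $G\hookrightarrow\GL(V)$; the operators $\rho(\gamma_1),\ldots,\rho(\gamma_{s+t})$ pairwise commute and are individually diagonalizable, and hence are simultaneously diagonalizable in $V$. Consequently they lie in a single maximal torus $T$ of $\GL(V)$, which is Zariski closed, so $H_{\rho}\subset T$, making $H_{\rho}$ a closed subgroup of a torus, i.e.\ diagonalizable. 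I expect the simultaneous-diagonalization step to be the only real technical point — and it is essentially classical linear algebra once a faithful embedding has been chosen — while the main conceptual care is in tracking that commutativity of $H_{\rho}$ is what allows Richardson's criterion, the torus picture, and the Jordan decomposition to all line up.
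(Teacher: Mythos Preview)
Your proposal is correct and follows essentially the same route as the paper: Richardson's criterion for $(2)\Leftrightarrow(4)$, the structure theory of commutative algebraic groups for $(3)\Leftrightarrow(4)$, and simultaneous diagonalization in a faithful linear representation for $(1)\Rightarrow(3)$. Your $(1)\Rightarrow(3)$ is in fact slightly cleaner than the paper's --- once $\rho(\Gamma)$ sits inside a maximal torus $T$ of $\GL(V)$ you conclude $H_\rho\subset T$ directly, whereas the paper detours through the Jordan decomposition to show every element of $H_\rho$ is semisimple before invoking \cite{Bo}; one small caveat is that your closing remark overstates the role of commutativity for Richardson's criterion, which (as the paper notes) holds for arbitrary $\Gamma$.
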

\begin{proof}
In \cite{Ri88}, it is shown that the Zariski closure of $\rho(\Gamma)$
in $G$ is linearly reductive if and only if the $G$-orbit of $\rho$
in $\hom(\Gamma,G)\subset G^{r}$ is closed. Since being reductive
is equivalent to linearly reductive (in characteristic 0), this shows
the equivalence between (2) and (4) (which is in fact valid for any
$\Gamma$ not necessarily Abelian).

Now let $\gamma_{1},\cdots,\gamma_{r}$ be a generating set for $\Gamma$
and let $a_{j}:=\rho(\gamma_{j})\in G$, $j=1,\cdots,r$ for a fixed
$\rho\in\hom(\Gamma,G)$. Since $\Gamma$ is Abelian, $\rho(\Gamma)\subset G$,
the subgroup generated by $\{a_{1},...,a_{r}\}$, is an Abelian subgroup
of $G$. So, the Zariski closure $H_{\rho}$ of $\rho(\Gamma)$ is
an Abelian algebraic group (since commutation relations are polynomial).
Now suppose that $\rho\in\hom(\Gamma,G_{ss})$ which means by definition
that $a_{j}\in G_{ss}$, and consider a linear embedding of $G$,
$\psi:G\to\GL(n,\C)$. Then, the matrices $\psi(a_{1}),\cdots,\psi(a_{j})$
can be simultaneously conjugated by an element in $\GL(n,\C)$ to
lie in some maximal torus $T$ of $\GL(n,\C)$. Because $T$ is Zariski
closed in $\GL(n,\C)$, this means that for every $g\in H_{\rho}$
we will have $\psi(g)\in T$. Recall that the multiplicative Jordan
decomposition is preserved by homomorphisms: for $g=g_{s}g_{u}\in G$
with $g_{s}\in G_{ss}$ and $g_{u}$ unipotent, we have $\psi(g_{s})=(\psi(g))_{s}$
and $\psi(g_{u})=(\psi(g))_{u}$. Thus, for $g\in H_{\rho}$, we have
$\psi(g)=\psi(g)_{s}\cdot\psi(g)_{u}\in T$ which implies $\psi(g)_{u}=\psi(g_{u})=\mathbf{1}$,
and since $\psi$ is injective $g_{u}=\mathbf{1}$. So, $H_{\rho}$
consists only of semisimple elements of $G$ and by \cite{Bo} this
means that $H_{\rho}$ is a diagonalizable group, and hence reductive.
Thus, (1) implies (3) and so (4). Conversely, let $H_{\rho}$ be reductive.
Since it is also Abelian (as $\Gamma$ is Abelian) then, again by
\cite{Bo} it consists of semisimple elements. In particular, $\rho(\gamma_{j})\in G_{ss}$.
So (4) implies (1) as well.\end{proof}
\begin{rem}
\label{sikora} The fact that $\Gamma$ is Abelian is crucial in Proposition
\ref{semisimplelemma}. Indeed, if $\Gamma$ is not Abelian, neither
of the inclusions $\hom(\Gamma,G_{ss})\subset\hom(\Gamma,G)^{ps}$
or $\hom(\Gamma,G_{ss})\supset\hom(\Gamma,G)^{ps}$ is true in general.
Here are simple counter-examples. Let $\Gamma=\mathsf{F}_{2}$, the
free group of rank $2$, and $G=\SL(2,\mathbb{C})$. Let $\rho=(g,h)\in\hom(\Gamma,G)$
with $g=\left(\begin{array}{cc}
x & 0\\
0 & x^{-1}\end{array}\right)$ and $h=\left(\begin{array}{cc}
y & 1\\
0 & y^{-1}\end{array}\right)$, where $x,y\in\mathbb{C}\setminus\{0\}$ are each not $\pm1$. Then
both $g$ and $h$ are semi-simple, so $\rho\in\hom(\Gamma,G_{ss})$.
However, by conjugating with $\left(\begin{array}{cc}
t & 0\\
0 & t^{-1}\end{array}\right)$ and taking a limit as $t$ goes to $0$ we see that the tuple $(g,h)$
limits to $\left(\left(\begin{array}{cc}
x & 0\\
0 & x^{-1}\end{array}\right),\left(\begin{array}{cc}
y & 0\\
0 & y^{-1}\end{array}\right)\right)$, which is not in the $G$-orbit of $(g,h)$. So, $\rho\notin\hom(\Gamma,G)^{ps}$.
Now suppose that $\Gamma=\mathsf{F}_{3}$, the free group on 3 generators
and let $\rho=(g,h,k)\in G^{3}$ with $G=\SL(2,\mathbb{C})$ again.
Consider \[
g=\left(\begin{array}{cc}
1 & 1\\
0 & 1\end{array}\right),\quad h=\left(\begin{array}{cc}
x & 0\\
0 & x^{-1}\end{array}\right),\quad k=\left(\begin{array}{cc}
y & z\\
z & w\end{array}\right)\]
 satisfying $yw-z^{2}=1$ and $(x-x^{-1})z\neq0$. If $\rho$ is an
irreducible representation, then its orbit is closed; and if it were
reducible, then $(h,k)$ could be made simultaneously upper-triangular.
However, a simple computation shows this to be impossible. Thus, $\rho\in\hom(\mathsf{F}_{3},G)^{ps}$,
but $g\notin G_{ss}$, so $\rho\notin\hom(\mathsf{F}_{3},G_{ss})$. 
\end{rem}
\ 
\begin{rem}
\label{realcase:step2} A \emph{real reductive algebraic group} is
exactly the real locus of a complex reductive algebraic group $G$,
with respect to an embedding into some $\C^{N}$. It can be arranged
for this locus $G(\R)$ to be the fixed point set in $G$ by complex
conjugation. By definition, the semisimple elements of $G(\R)$ is
those that are semisimple in $G$; that is, $G(\R)_{ss}=G_{ss}\cap G(\R)$.
Items (1) and (2) from Proposition \ref{semisimplelemma} remain equivalent
for $G(\R)$. This follows since by definition $\hom(\Gamma,G(\R)_{ss})\subset\hom(\Gamma,G_{ss})$,
and thus by Proposition \ref{semisimplelemma} $\rho\in\hom(\Gamma,G(\R)_{ss})$
if and only if the $G$-orbit of $\rho$ is closed. However, by \cite{Bi71},
the $G$-orbit of $\rho$ is closed if and only if the $G(\R)$-orbit
of $\rho$ is closed. Therefore, as in the complex case, we conclude
that $\hom(\Gamma,G(\R)_{ss})=\hom(\Gamma,G(\R))^{ps}$ and consequently
$\hom(\Gamma,G(\R))^{ps}/G(\R)=\hom(\Gamma,G(\R)_{ss})/G(\R)$ since
both are $G(\R)$-spaces. 
\end{rem}
Next, define ${\displaystyle G(K)=\bigcup_{g\in G}gKg^{-1}}$ to be
the set of all $G$-conjugates of the group $K$, and consequently
define \[
\hom(\Gamma,G(K))=\{\rho\in\hom(\Gamma,G)\ |\ \rho(\gamma_{1}),...,\rho(\gamma_{r})\in G(K)\}.\]
 Clearly, $G(\hom(\Gamma,K))\subset\hom(\Gamma,G(K))$. 
\begin{lem}
\label{ghomkhomgk} If $\Gamma$ is a finitely generated Abelian group,
then \[
G(\hom(\Gamma,K))=\hom(\Gamma,G(K)).\]
 \end{lem}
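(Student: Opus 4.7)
The forward inclusion $G(\hom(\Gamma,K))\subset\hom(\Gamma,G(K))$ is immediate: if $\rho\in\hom(\Gamma,K)$ and $g\in G$, then $g\rho(\gamma_i)g^{-1}\in gKg^{-1}\subset G(K)$ for each generator $\gamma_i$. The substantive direction is the reverse. Given $\rho\in\hom(\Gamma,G(K))$, each $\rho(\gamma_i)$ is $G$-conjugate to some element of $K$, but a priori by a \emph{different} conjugator $g_i$; the task is to produce a single $g\in G$ with $g\rho(\gamma_i)g^{-1}\in K$ for all $i$ simultaneously. I will rely on the hypothesis that $\Gamma$ is Abelian, which makes the $\rho(\gamma_i)$ commute.

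First, since each $\rho(\gamma_i)$ is conjugate to an element of the compact group $K$, it is semisimple. Thus $\rho\in\hom(\Gamma,G_{ss})$, and Proposition \ref{semisimplelemma} yields that $H_\rho=\overline{\rho(\Gamma)}^Z$ is a diagonalizable algebraic subgroup of $G$. In particular, $H_\rho$ lies in some maximal torus $T\subset G$, so all of the $\rho(\gamma_i)$ sit in the \emph{single} torus $T$. Next, fix a maximal torus $T_K$ of $K$, set $\mathfrak{t}_K=\mathrm{Lie}(T_K)$, and consider the ``standard'' maximal torus $T_0:=T_K\cdot\exp(i\mathfrak{t}_K)$ of $G$. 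Since all maximal tori of $G$ are $G$-conjugate, pick $h\in G$ with $hTh^{-1}=T_0$. Setting $\rho':=h\rho h^{-1}$, we have $\rho'(\gamma_i)\in T_0$ for all $i$, while each $\rho'(\gamma_i)$ is still $G$-conjugate to an element of $K$.

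The main obstacle is the final step: showing $\rho'(\gamma_i)\in T_K\subset K$. I will use the embedding $G\subset\mathsf{O}(n,\C)$ coming from the Peter--Weyl construction mentioned in the excerpt, with $K\subset\mathsf{O}(n,\R)\subset\mathsf{U}(n)$. In a common eigenbasis for $T_0$, elements of $T_K$ are unitary diagonal, while elements of $\exp(i\mathfrak{t}_K)$ are positive-definite Hermitian diagonal. Thus every $t\in T_0$ factors uniquely as $t=t_c\cdot p$ with $t_c\in T_K$ and $p\in\exp(i\mathfrak{t}_K)$, and the eigenvalues of $t$ take the form $e^{i\theta_j}e^{r_j}$ with $\theta_j,r_j\in\mathbb{R}$. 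Since conjugation preserves eigenvalues, if $t$ is $G$-conjugate to an element of $K$ then all eigenvalues have modulus one, forcing $r_j=0$ and hence $p=\mathbf{1}$. Applying this to each $\rho'(\gamma_i)$ gives $\rho'(\gamma_i)=(\rho'(\gamma_i))_c\in T_K\subset K$, so $\rho'\in\hom(\Gamma,K)$ and $\rho=h^{-1}\rho'h\in G(\hom(\Gamma,K))$, completing the proof.
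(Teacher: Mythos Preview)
Your argument has a genuine gap at the step where you assert that $H_\rho$, being diagonalizable, lies in some maximal torus $T\subset G$. Proposition~\ref{semisimplelemma} only tells you that $H_\rho$ is diagonalizable \emph{as an abstract algebraic group}, i.e.\ isomorphic to a closed subgroup of some torus; it does \emph{not} guarantee that $H_\rho$ sits inside a maximal torus of the ambient group $G$. In fact this fails in general: as the paper itself observes in Remark~\ref{goldman}, the elements $\mathrm{diag}(-1,-1,1)$ and $\mathrm{diag}(1,-1,-1)$ in $\mathsf{SO}(3)$ commute and are semisimple, yet the Klein four-group they generate cannot be conjugated into any maximal torus of $\mathsf{SO}(3,\C)$, since such tori are one-dimensional and therefore have cyclic torsion. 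Your eigenvalue argument in the final step is sound once the $\rho(\gamma_i)$ have been placed in a common maximal torus of $G$, but placing them there is precisely the obstruction, and it cannot be done in general.

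The paper's proof sidesteps this by working with maximal compact subgroups rather than maximal tori. One lets $A=\overline{\rho(\Gamma)}^Z$, which is Abelian and reductive, and uses that an Abelian reductive group has a \emph{unique} maximal compact subgroup $B$. Each $\rho(\gamma_i)$, being $G$-conjugate into $K$, lies in some conjugate $g_iKg_i^{-1}$; hence $\rho(\gamma_i)\in A\cap g_iKg_i^{-1}$, a compact subgroup of $A$, and therefore $\rho(\gamma_i)\in B$. Since $B$ is a compact subgroup of $G$, it is conjugate into $K$ by a single $g\in G$, yielding the common conjugator. The key structural fact is that compact subgroups of an Abelian reductive group always lie in the unique maximal compact, whereas diagonalizable subgroups of $G$ need not lie in a maximal torus of $G$.
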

\begin{proof}
Since $G(\hom(\Gamma,K))\subset\hom(\Gamma,G(K))$ for any $\Gamma$,
it suffices to prove the reverse inclusion for finitely generated
Abelian groups. This argument follows the one in \cite[pg. 16]{PeSo}.
Let $\Gamma$ be generated by $\{\gamma_{1},...,\gamma_{r}\}$ and
let $\rho\in\hom(\Gamma,G(K))$. Let $A$ be the Zariski closure of
the group generated by $\{\rho(\gamma_{1}),...,\rho(\gamma_{r})\}$.
Since $G(K)$ only consists of semisimple elements, the proof of Lemma
\ref{semisimplelemma} tells us that $A$ is Abelian and consists
of only semisimple elements. Since $A$ is algebraic it has a maximal
compact subgroup $B$. Since it is Abelian, $B$ is unique. Since
each $\rho(\gamma_{i})$ is conjugate to an element in $K$, each
is in some maximal compact subgroup. Therefore, each of them is in
the unique maximal compact $B$. However, since $B\subset G$ is compact
it extends to a maximal compact subgroup of $G$, and since all maximal
compact subgroups are conjugate, there exists a $g\in G$ so that
$gBg^{-1}\subset K$ which in turn implies that $g\rho(\gamma_{i})g^{-1}\in K$
for all $1\leq i\leq r$. By definition, this implies $\rho\in G(\hom(\Gamma,K))$
and as such $G(\hom(\Gamma,K))\supset\hom(\Gamma,G(K))$. \end{proof}
\begin{rem}
\label{realcase:step3} Lemma \ref{ghomkhomgk} remains valid when
replacing the pair $G,K$ by the pair $G(\R),K(\R)$. First note that
we can arrange for $K(\R)\subset K$, and so we assume this is the
case. The key properties that are needed to generalize the proof of
Lemma \ref{ghomkhomgk} are in Section 3.7 of \cite{PeSo}; namely,
(1) $\{\rho(\gamma_{1}),...,\rho(\gamma_{r})\}\subset G(\R)(K(\R))$
generates an Abelian subgroup $A$ consisting of semisimple elements
whose Zariski closure $\overline{A}$ has its $\R$-points in $G(\R)$,
and (2) the intersection of $A$ with the unique maximal compact subgroup
of $\overline{A}$ gives a unique maximal compact subgroup of $A$.
The rest of the argument is the same. 
\end{rem}
Summarizing the last two sections, we have shown that when $\Gamma$
is a finitely generated Abelian group, we can replace the right inclusion,
with the the left inclusion in the following diagram:

\[
\xymatrix{\hom(\Gamma,G_{ss})/G\ar[r]^{\cong} & \hom(\Gamma,G)\quot G\\
\hom(\Gamma,G(K))/G\ar@{^{(}->}[u]\ar[r]_{\cong} & \hom(\Gamma,K)/K\ar@{^{(}->}[u]}
.\]

We will now show that there is a $G$-equivariant strong deformation
retraction $\hom(\Gamma,G(K))\hookrightarrow\hom(\Gamma,G_{ss})$.

\section{Deformation Retraction of Character Varieties\label{sec:Four}}

Recall that a strong deformation retraction (SDR) from a topological
space $M$ to a subspace $N\,\subset\, M$ is a continuous map $\phi\,:\,[0,1]\times M\,\longrightarrow\, M$
such that (1) $\phi_{0}$ is the identity on $M$, (2) $\phi_{t}(n)=n$
for all $n\in N$ and $t\in[0,1]$, and (3) $\phi_{1}(M)\subset N$.
In short, it is a homotopy relative to $N$ between the identity on
$M$ and a retraction mapping to $N$. We are going to construct an
explicit $G$-equivariant strong deformation retraction from $G_{ss}$
to $G(K)$.

Following Pettet-Souto \cite{PeSo}, we start with a deformation in
the case when $G=\SL(n,\mathbb{C})$. Let $\Delta_{n}$ be the subgroup
of diagonal matrices in $\SL(n,\mathbb{C})$, which is a maximal torus,
identified in the usual way with a subgroup of $\left(\mathbb{C}^{*}\right)^{n}$.
Consider the following deformation retraction from $\Delta_{n}$ to
the subset $\Delta_{n}\cap\SU(n)$: \begin{eqnarray}
\sigma:[0,1]\times\Delta_{n} & \to & \Delta_{n}\label{eq:sigma}\\
\left(t,g\right) & \mapsto & \sigma_{t}(g)\nonumber \end{eqnarray}
 where, for $g=\mbox{diag}(z_{1},\cdots,z_{n})\in\Delta_{n}$, and
$z_{1},...,z_{n}\in\mathbb{C}^{*}$, \[
\sigma_{t}(g):=\mbox{diag}(|z_{1}|^{-t}z_{1},\cdots,|z_{n}|^{-t}z_{n}).\]
 The strong deformation retraction properties of $\sigma_{t}$ are
easily established. Note that $\sigma_{t}$ is a homomorphism for
every $t\in[0,1]$, and clearly is equivariant with respect to complex
conjugation in $\SL(n,\C)$.

Suppose that $g\in\SL(n,\mathbb{C})_{ss}$ is semisimple, which means
it is diagonalizable. Since all maximal tori are conjugate, there
is $h\in\SL(n,\mathbb{C})$ (depending on $g$) so that $hgh^{-1}\in\Delta_{n}$.
Define the following map: \begin{equation}
\delta:[0,1]\times\SL(n,\mathbb{C})_{ss}\to\SL(n,\mathbb{C})\label{eq:delta}\end{equation}
 by letting \[
\delta_{t}(g):=h^{-1}\sigma_{t}(hgh^{-1})h.\]
 We have the following properties of $\delta$. 
\begin{lem}
\label{lem:delta-SLn}The map $\delta$ satisfies: 
\begin{enumerate}
\item $\delta_{t}$ is well defined; that is, it does not depend on the
choice of $h$; 
\item $\delta_{t}(g)\in\SL(n,\mathbb{C})_{ss}$ for all $t\in[0,1]$ and
all $g\in\SL(n,\mathbb{C})_{ss}$; 
\item $\delta$ is a strong deformation retraction from $\SL(n,\mathbb{C})_{ss}$
to the set of $\SL(n,\mathbb{C})$ conjugates of $\SU(n)$.

Moreover, for every $t\in[0,1]$, we have:

\item $\delta_{t}$ is $\SL(n,\mathbb{C})$-equivariant; 
\item if $g_{1}g_{2}=g_{2}g_{1}$ then $\delta_{t}(g_{1})\delta_{t}(g_{2})=\delta_{t}(g_{2})\delta_{t}(g_{1})$; 
\item $\delta_{t}(g^{m})=\left(\delta_{t}(g)\right)^{m}$ for all $m\in\mathbb{N}$,
and all $g\in\SL(n,\mathbb{C})_{ss}$ 
\end{enumerate}
\end{lem}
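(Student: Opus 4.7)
The plan is to reduce each of the six properties to an elementary fact about the scalar map $f_t(z) := |z|^{-t}z$, whose entrywise action on diagonal matrices is $\sigma_t$.

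I would begin with well-definedness (1), which underpins the rest. Suppose $h_1 g h_1^{-1} = d_1$ and $h_2 g h_2^{-1} = d_2$ are two diagonalizations, and set $k := h_2 h_1^{-1}$, so $k d_1 = d_2 k$. Comparing entries: whenever $k_{ij}\neq 0$ one has $(d_1)_{jj} = (d_2)_{ii}$, hence $f_t((d_1)_{jj}) = f_t((d_2)_{ii})$, which forces $k\sigma_t(d_1) = \sigma_t(d_2)k$. This yields $h_1^{-1}\sigma_t(d_1)h_1 = h_2^{-1}\sigma_t(d_2)h_2$, proving independence of the choice of $h$.

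Properties (2), (4), (5), (6) then follow quickly: (2) because $\delta_t(g)$ is conjugate to the diagonal matrix $\sigma_t(d)$; (4) because if $h$ diagonalizes $g$ then $hk^{-1}$ diagonalizes $kgk^{-1}$ to the same $d$; (5) because commuting semisimple elements admit a common diagonalization, and all diagonal matrices commute; and (6) from the scalar identity $f_t(z^m) = f_t(z)^m$. For (3), the boundary conditions are immediate: $\sigma_0 = \mathrm{id}$ gives $\delta_0 = \mathrm{id}$, while $\sigma_1(d)\in \Delta_n\cap \SU(n)$ (note $\det\sigma_1(d) = \prod|z_i|^{-1}z_i = 1$) places $\delta_1(g)$ among the $\SL_n(\mathbb{C})$-conjugates of $\SU(n)$. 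The retraction property holds because any $g\in G(K)$ can be diagonalized to a matrix with unit-modulus entries, which is fixed by every $\sigma_t$.

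The sole substantive obstacle is continuity of $\delta$ on $[0,1]\times \SL_n(\mathbb{C})_{ss}$: the diagonalizer $h$ cannot be chosen continuously across strata where eigenvalues coalesce. I would handle this through cluster spectral projectors
\[
P_{\gamma}(g) \;=\; \frac{1}{2\pi i}\oint_{\gamma}(\zeta-g)^{-1}\,d\zeta,
\]
which depend continuously on $g$ as long as $\gamma$ avoids the spectrum. Near a given $g_0$, I would choose small disjoint loops enclosing the distinct eigenvalues of $g_0$; writing $\delta_t(g)$ as a sum over these clusters and invoking uniform continuity of $f_t$ on compact subsets of $\mathbb{C}^*$, one checks that the contribution from each cluster converges to the appropriate limit as $g\to g_0$. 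Joint continuity in $(t,g)$ then follows from continuity of $(t,z)\mapsto f_t(z)$ on $[0,1]\times\mathbb{C}^*$.
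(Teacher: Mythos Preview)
Your proof is correct and covers all six items. Two points of comparison with the paper's appendix proof are worth recording.

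For well-definedness (1), the paper argues via the Weyl group: since the two diagonalizations $d_1,d_2\in\Delta_n$ are conjugate, they are $W$-conjugate, and $\sigma_t$ is $W$-equivariant. As literally written, the appendix asserts $h_1h_2^{-1}\in W$, which fails whenever $d_2$ has repeated eigenvalues (take $d_1=d_2$ with a repeated entry; then $h_1h_2^{-1}$ can be any element of the centralizer of $d_2$, not just $N(\Delta_n)$). The argument is easily repaired by observing that the centralizer of $d_2$ also commutes with $\sigma_t(d_2)$, but your intertwiner computation---comparing entries in $kd_1=d_2k$ to get $k\sigma_t(d_1)=\sigma_t(d_2)k$ directly---bypasses the issue entirely. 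It is simply the statement that $\delta_t$ is a spectral function of $g$, so any intertwiner for $g$ is automatically one for $\delta_t(g)$; this is both more elementary and immediately rigorous.

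For continuity in (3), the paper writes only ``Clearly, $\sigma$ is continuous, which implies $\delta$ is continuous as well,'' without confronting the fact that the diagonalizer $h$ cannot be chosen continuously. You correctly isolate this as the only substantive point and outline a cluster-projector argument. One caveat: uniform continuity of $f_t$ alone does not quite close the estimate, because within a cluster the individual eigenprojectors $P_{\mu_i}(g)$ can blow up in norm even while $g\to g_0$ inside $\SL(n,\C)_{ss}$; for instance
\[
g_\epsilon=\begin{pmatrix}1+\epsilon & \sqrt{\epsilon}\\ 0 & (1+\epsilon)^{-1}\end{pmatrix}\longrightarrow I
\]
has eigenprojectors of norm $\sim\epsilon^{-1/2}$. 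What rescues the argument is that $f_t$ is Lipschitz on compact subsets of $\C^*$, so the prefactors $f_t(\mu_i)-f_t(\lambda_j)$ are $O(|\mu_i-\lambda_j|)$, and this extra decay absorbs the blowup (in the example, $\epsilon\cdot\epsilon^{-1/2}\to 0$). With that strengthening your outline goes through, and in any case it already says more than the paper does at this step.
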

\begin{proof}
All these properties follow from simple computations. For properties
(1) to (5) the reader may consult \cite{PeSo}, or Appendix \ref{appendix1}.
Let us show (6). If $g\in\SL(n,\mathbb{C})$ is semisimple, and $h\in\SL(n,\mathbb{C})$
is chosen so that $hgh^{-1}\in\Delta_{n}$, then $(hgh^{-1})^{m}=hg^{m}h^{-1}$
is also in $\Delta_{n}$. So, \[
\delta_{t}(g^{m})=h^{-1}\sigma_{t}(hg^{m}h^{-1})h=h^{-1}\left(\sigma_{t}(hgh^{-1})\right)^{m}h=\left(\delta_{t}(g)\right)^{m}\]
 as wanted. Note that we used the homomorphism property of $\sigma_{t}$. 
\end{proof}
Let $m\in\mathbb{N}$. For any group $G$, one can define the $m^{{\rm th}}$
power map $p_{m}:G\to G$, by $p_{m}(g)=g^{m}.$ If $g,h\in G$ and
$h^{m}=g$ we say that $h$ is an $m$th root of $g$.

For later convenience, we here record the following fact about the
power map on $\SL(n,\mathbb{C})$. Recall that $\Delta_{n}\subset\SL(n,\mathbb{C})$
denotes the subgroup of diagonal matrices in $\SL(n,\mathbb{C})$. 
\begin{lem}
\label{lem:power-map-diag}Let $p_{m}:\SL(n,\mathbb{C})\to\SL(n,\mathbb{C})$
be the $m^{{\rm th}}$ power map. Then $p_{m}^{-1}(\Delta_{n})=\Delta_{n}$.\end{lem}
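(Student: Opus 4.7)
The plan is to establish the two inclusions of $p_m^{-1}(\Delta_n) = \Delta_n$ separately. One direction, $\Delta_n \subseteq p_m^{-1}(\Delta_n)$, is immediate: $\Delta_n$ is a subgroup of $\SL(n,\mathbb{C})$, hence closed under taking $m$-th powers.

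For the reverse inclusion, I would take $A \in \SL(n,\mathbb{C})$ with $A^m \in \Delta_n$ and first argue that $A$ must be semisimple. Using the multiplicative Jordan decomposition $A = A_s A_u$ with $A_s$ semisimple, $A_u$ unipotent, and $[A_s,A_u]=I$, one has $A^m = A_s^m A_u^m$. Since $A^m$ is diagonal, hence semisimple, the uniqueness of the Jordan decomposition of $A^m$ forces $A_u^m = I$. In characteristic zero, no nontrivial unipotent element has finite order (via $\log$/$\exp$), so $A_u = I$ and $A = A_s$ is semisimple.

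Next, diagonalize $A$ as $A = hDh^{-1}$ with $D \in \Delta_n$ and $h \in \SL(n,\mathbb{C})$, and consider the resulting identity $hD^m h^{-1} = A^m \in \Delta_n$. The key structural step is to deduce that $h$ lies in the normalizer $N(\Delta_n) = \Delta_n \rtimes W$, where $W$ is the Weyl group of permutation matrices in $\SL(n,\mathbb{C})$. This is immediate in the regular case, where $D^m$ has pairwise distinct diagonal entries: then $Z_{\SL(n,\mathbb{C})}(D^m) = \Delta_n$, and the condition that $h$ conjugates $D^m$ into $\Delta_n$ is exactly the condition that $h \in N(\Delta_n)$. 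Writing $h = \tau w$ with $\tau \in \Delta_n$ and $w \in W$, one has $A = \tau(wDw^{-1})\tau^{-1}$; since both $\tau$ and $wDw^{-1}$ are diagonal they commute, giving $A = wDw^{-1} \in \Delta_n$ as desired.

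The main obstacle is the non-regular case, where $D^m$ has coincident diagonal entries (in the extreme, when $A^m$ is scalar), so that $Z_{\SL(n,\mathbb{C})}(D^m)$ strictly contains $\Delta_n$ and the conjugator $h$ need not a priori normalize $\Delta_n$. The way around this is to exploit the fact that $A$ commutes with $A^m$, so $A$ preserves each eigenspace of $A^m$; one can therefore refine the diagonalization by replacing $h$ with $hc$ for a suitable $c \in Z_{\SL(n,\mathbb{C})}(D^m)$, choosing $c$ so that the new conjugator lies in $N(\Delta_n)$. This reduces the general case to the regular one and completes the argument.
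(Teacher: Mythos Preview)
Your treatment of the non-regular case has a genuine gap, and in fact the lemma as stated is false. Take $n=m=2$ and $A=\left(\begin{smallmatrix}0&1\\-1&0\end{smallmatrix}\right)\in\SL(2,\mathbb{C})$: then $A^{2}=-I\in\Delta_{2}$, so $A\in p_{2}^{-1}(\Delta_{2})$, yet $A\notin\Delta_{2}$. Note that $A$ is semisimple (eigenvalues $\pm i$), so your Jordan-decomposition argument correctly yields $A=A_{s}$; it is the final step that cannot be rescued. Your proposed fix uses that $A$ preserves each eigenspace of $A^{m}$, and since these eigenspaces are coordinate subspaces you can indeed diagonalize $A$ block by block, producing a conjugator that is block-diagonal and hence lies in $Z_{\SL(n,\mathbb{C})}(A^{m})$. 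But such a conjugator has no reason to lie in $N(\Delta_{n})$: the eigenvectors of $A$ inside a given coordinate subspace need not themselves be standard basis vectors. In the example above the only eigenspace of $A^{2}=-I$ is all of $\mathbb{C}^{2}$, the eigenvectors of $A$ are proportional to $(1,\pm i)$, and no monomial matrix diagonalizes $A$. Thus no choice of $c\in Z_{\SL(n,\mathbb{C})}(D^{m})$ can force $hc\in N(\Delta_{n})$ while keeping $c^{-1}Dc$ diagonal.

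The paper's own proof skates over the same point with the claim that ``$g$ and $g^{m}$ share the same eigenvectors,'' which holds only in one direction (eigenvectors of $g$ are eigenvectors of $g^{m}$, not conversely when $g^{m}$ has repeated eigenvalues). What the paper actually uses downstream is only that $g^{m}\in\Delta_{n}$ forces $g$ to be \emph{semisimple} in $\SL(n,\mathbb{C})$, and that much your Jordan-decomposition argument does establish correctly; the stronger conclusion $g\in\Delta_{n}$ is simply false.
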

\begin{proof}
One inclusion is clear. If $g\in\Delta_{n}$ then $g^{m}\in\Delta_{n}$
so that $g\in p_{m}^{-1}(\Delta_{n})$ by definition. For the converse,
assume that $g\in p_{m}^{-1}(\Delta_{n})$, which means that $g^{m}\in\Delta_{n}$,
so $g^{m}$ is semisimple. Let $g=g_{s}g_{u}$ be the multiplicative
Jordan decomposition, so that $g_{s}$ is semisimple, $g_{u}$ is
unipotent and $g_{s}g_{u}=g_{u}g_{s}$. Then $g^{m}=(g_{s}g_{u})^{m}=g_{s}^{m}g_{u}^{m}$.
Since powers of diagonalizable and unipotent matrices remain respectively
diagonalizable and unipotent, $g_{s}^{m}$ is semisimple and $g_{u}^{m}$
is unipotent. By the uniqueness of the Jordan decomposition, we conclude
that $g^{m}=g_{s}^{m}$ and $g_{u}^{m}=\mathbf{1}$. Since the exponential
map is a diffeomorphism between nilpotent and unipotent matrices in
$\SL(n,\mathbb{C})$, we have $g_{u}^{m}=\exp(m\exp^{-1}(g_{u}))=\mathbf{1}$
which implies that $\exp^{-1}g_{u}=0$ and so $g_{u}=\mathbf{1}$.
Therefore, $g=g_{s}$ is also semisimple. Since $g$ and $g^{m}$
share the same eigenvectors (a linear algebra argument) and $g^{m}\in\Delta_{n}$,
we see that $g$ is also in $\Delta_{n}$. 
\end{proof}

\subsection{Deformation retraction for general $G$}

\label{subsect41}

Now let $G$ be a complex reductive algebraic group, not necessarily
connected, and let $G_{0}$ be its identity component. Let $\mathbf{1}$
denote also the identity in $G$. Then $G_{0}$ is normal in $G$,
and indeed there is a short exact sequence \[
\mathbf{1}\to G_{0}\to G\stackrel{\pi}{\to}F\to\mathbf{1}\]
 where $F:=G/G_{0}\cong\pi_{0}(G)$ is a finite group. As an algebraic
set, $G$ is isomorphic to the Cartesian product $G_{0}\times F$,
and we can write \[
G=\amalg_{f\in F}G_{f}\]
 where $G_{f}$ denotes the connected component $G_{f}:=\pi^{-1}(f)$
for some $f\in F$ (in particular $G_{0}=\pi^{-1}(\mathbf{1}_{F})$).

However, in general $G$ is not the direct product of $F$ and $G_{0}$
as groups. On the other hand, we have a simple relation between semisimple
elements in $G$ and in $G_{0}$, which will enable us to deduce an
appropriate deformation retraction for such a general $G$. Recall
that $G_{ss}$ denotes the set of semisimple elements in $G$. 
\begin{lem}
\label{lem:semisimple-order-m}Let $f\in F$ have order $m$, so that
$f^{m}=\mathbf{1}_{F}$. If $g\in G_{ss}\cap G_{f}$, then $g^{m}\in(G_{0})_{ss}$.
In particular, if $N$ is the order of the group $F$, $G_{ss}^{N}\subset(G_{0})_{ss}$.\end{lem}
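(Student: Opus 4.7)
The plan is to split the statement into two separate claims: first, that $g^m$ lies in the identity component $G_0$; second, that $g^m$ is semisimple as an element of that subgroup. Together these give $g^m\in (G_0)_{ss}$, and the ``in particular'' assertion then follows from Lagrange's theorem.

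First I would verify the containment $g^m\in G_0$. Since $\pi:G\to F$ is a group homomorphism and $g\in G_f=\pi^{-1}(f)$, we have $\pi(g^m)=\pi(g)^m=f^m=\mathbf{1}_F$, so $g^m\in\ker\pi=G_0$.

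Next I would check that $g^m$ is semisimple. Fix a faithful rational representation $\rho:G\hookrightarrow\GL(n,\C)$, which exists because $G$ is a complex reductive algebraic group. The hypothesis $g\in G_{ss}$ forces $\rho(g)$ to be diagonalizable in $\GL(n,\C)$, and therefore $\rho(g^m)=\rho(g)^m$ is also diagonalizable. Restricting $\rho$ to $G_0$ yields a faithful rational representation of $G_0$, so this diagonalizability says precisely that $g^m\in(G_0)_{ss}$.

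For the last sentence of the lemma, let $g\in G_{ss}$ be arbitrary and let $f\in F$ be the unique element with $g\in G_f$. By Lagrange's theorem the order of $f$ divides $N=|F|$, so $f^N=\mathbf{1}_F$; the argument above then applies with $N$ in place of $m$, giving $g^N\in(G_0)_{ss}$. The only delicate point is to keep the two notions of semisimplicity straight --- semisimple as an element of $G$ versus semisimple as an element of $G_0$ --- but since semisimplicity is an intrinsic property detected by any faithful rational representation, restricting a faithful representation of $G$ to the subgroup $G_0$ resolves this with no extra work.
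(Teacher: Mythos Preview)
Your proof is correct and follows essentially the same route as the paper: use the homomorphism $\pi$ to place $g^m$ in $G_0$, then observe that powers of semisimple elements are semisimple, and finally deduce the ``in particular'' clause from $f^N=\mathbf{1}_F$ for all $f\in F$. The only difference is that you spell out the semisimplicity step (and the compatibility of the notions in $G$ and $G_0$) via a faithful representation, whereas the paper simply declares it clear.
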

\begin{proof}
Let $g\in G_{ss}\cap G_{f}$. Then $\pi(g^{m})=(\pi(g))^{m}=f^{m}=\mathbf{1}$,
so by exactness of the sequence, $g^{m}\in G_{0}$. Since $g$ is
semisimple, it is clear that all its powers are also semisimple. So,
$g^{m}\in(G_{0})_{ss}$. The second statement follows from the fact
that if $N=\#F$, then $f^{N}=\mathbf{1}_{F}$, for all $f\in F$. 
\end{proof}
Consider now an affine algebraic embedding of groups $G\subset\SL(n,\mathbb{C})$.
Let $K$ and $T_{0}$ be respectively, a maximal compact subgroup
of $G$ and a maximal torus of $G_{0}$ such that $T_{0}\cap K$ is
a maximal torus in $K_{0}=K\cap G_{0}$. Up to conjugating in $\SL(n,\mathbb{C})$
we may assume that $T_{0}\subset\Delta_{n}$ and that $K\subset\SU(n)$. 
\begin{lem}
\label{lem:semisimple-inclusion}We have the inclusion $G_{ss}\subset\SL(n,\mathbb{C})_{ss}$.\end{lem}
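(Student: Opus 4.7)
The plan is to invoke the compatibility of the multiplicative Jordan decomposition with morphisms of linear algebraic groups. Recall that for any $g$ in a reductive (more generally, linear) algebraic group $H$, there is a unique decomposition $g=g_{s}g_{u}$ with $g_{s}$ semisimple, $g_{u}$ unipotent, and $g_{s}g_{u}=g_{u}g_{s}$; by definition $g$ is semisimple precisely when $g_{u}=\mathbf{1}$. The key classical fact (see, e.g., Borel's \emph{Linear Algebraic Groups}) is that if $\psi\colon H\to H'$ is a morphism of linear algebraic groups, then $\psi(g)_{s}=\psi(g_{s})$ and $\psi(g)_{u}=\psi(g_{u})$; in other words, $\psi$ intertwines the Jordan decompositions in $H$ and in $H'$.

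First, I apply this to the inclusion $\iota\colon G\hookrightarrow\SL(n,\mathbb{C})$, which is by hypothesis a morphism of algebraic groups. Given $g\in G_{ss}$, the decomposition inside $G$ is $g=g\cdot\mathbf{1}$. By the preservation property, the Jordan decomposition of $\iota(g)$ inside $\SL(n,\mathbb{C})$ must also have trivial unipotent part: $\iota(g)_{u}=\iota(g_{u})=\iota(\mathbf{1})=\mathbf{1}$. Hence $\iota(g)$ is semisimple in $\SL(n,\mathbb{C})$, i.e., diagonalizable, and identifying $G$ with its image we conclude $g\in\SL(n,\mathbb{C})_{ss}$, as desired.

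There is essentially no obstacle here beyond recognizing which classical statement to quote; the only thing to double-check is that the notion of ``semisimple'' used in the paper (complete reducibility on every finite-dimensional rational representation of the ambient group) coincides, for linear algebraic groups in characteristic $0$, with the intrinsic notion based on the Jordan decomposition. This is standard: for $g\in\SL(n,\mathbb{C})$, semisimple in either sense is equivalent to diagonalizability of $g$ as an $n\times n$ matrix, and the Jordan-theoretic definition inside $G$ is the one that transports cleanly through the embedding. Thus the lemma reduces to one line once this naturality is invoked.
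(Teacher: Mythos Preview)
Your argument is correct: the naturality of the multiplicative Jordan decomposition under morphisms of linear algebraic groups (which the paper itself invokes earlier, in the proof of Proposition~\ref{semisimplelemma}) immediately gives $G_{ss}\subset\SL(n,\mathbb{C})_{ss}$, and the definitional check you make is the right one.

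The paper, however, takes a different and more hands-on route. It fixes $g\in G_{ss}\cap G_{f}$ with $f^{m}=\mathbf{1}_{F}$, uses Lemma~\ref{lem:semisimple-order-m} to get $g^{m}\in(G_{0})_{ss}$, conjugates $g^{m}$ into $T_{0}\subset\Delta_{n}$ by some $h\in G_{0}$, and then applies Lemma~\ref{lem:power-map-diag} to conclude $hgh^{-1}\in p_{m}^{-1}(\Delta_{n})=\Delta_{n}$. Your approach is cleaner for the bare statement, but the paper's argument extracts a stronger intermediate conclusion: the diagonalizing element $h$ lies in $G_{0}$ (not merely in $\SL(n,\mathbb{C})$), and $hgh^{-1}$ lands in $p_{m}^{-1}(T_{0})$. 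This extra information is exactly what is recycled in the Appendix proof of Lemma~\ref{lem:Appendix2} (``as in Lemma~\ref{lem:semisimple-inclusion}, there is $h\in G_{0}$ \ldots''). So your one-line proof suffices for Lemma~\ref{lem:semisimple-inclusion} as stated, but be aware that the paper's longer argument is doing double duty for what comes later.
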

\begin{proof}
Given $g\in G_{ss}$, suppose that $g\in G_{f}$ with $f^{m}=\mathbf{1}_{F}$.
Then, by Lemma \ref{lem:semisimple-order-m} $g^{m}\in(G_{0})_{ss}$.
So, because $G_{0}$ is connected and all maximal tori are conjugate
inside it, there exists $h\in G_{0}$ so that $hg^{m}h^{-1}\in T_{0}\subset\Delta_{n}$.
This means that $hgh^{-1}\in p_{m}^{-1}(T_{0})\subset p_{m}^{-1}(\Delta_{n})=\Delta_{n}$,
by Lemma \ref{lem:power-map-diag}. So $hgh^{-1}$ is semisimple in
$\SL(n,\mathbb{C})$, and the same holds for $g$. 
\end{proof}
This Lemma allows us to restrict the map $\delta$ in equation (\ref{eq:delta})
to $G_{ss}$, and so we define a new map $\delta:[0,1]\times G_{ss}\to\SL(n,\mathbb{C})_{ss}$,
still denoted by $\delta$. In particular, for every $t\in[0,1]$,
$\delta_{t}$ preserves commutativity (ie, if $g_{1}g_{2}=g_{2}g_{1}$
with $g_{1},g_{2}\in G_{ss}$ then $\delta_{t}(g_{1})\delta_{t}(g_{2})=\delta_{t}(g_{2})\delta_{t}(g_{1})$)
and torsion (ie, $g^{N}=\mathbf{1}$ implies that $\delta_{t}(g^{N})=\left(\delta_{t}(g)\right)^{N}=\mathbf{1}$
for all $m\in\mathbb{N}$, and all $G_{ss}$) automatically, by items
(5) and (6) of Lemma \ref{lem:delta-SLn}. Also, for $t=0$ it is
clear that $\delta_{0}(g)=g$ for all $g\in G_{ss}$, since $\delta_{0}$
is the identity map on $\SL(n,\mathbb{C})$.

Now we complete the proof that $\delta$ indeed gives a strong deformation
retract from $G_{ss}$ to $G(K)$. 
\begin{lem}
\label{lem:Appendix2}We have:

$(1)$ For all $t\in[0,1]$ and all $g\in G_{ss}$, $\delta_{t}(g)\in G_{ss}$;

$(2)$ $\delta_{1}(g)\in G(K)$ for all $g\in G_{ss}$.\end{lem}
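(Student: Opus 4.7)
My plan is to reduce both claims to a direct analysis of the map $\sigma_t$ on elements lying in $\Delta_n \cap G$. Given $g \in G_{ss}$, the proof of Lemma \ref{lem:semisimple-inclusion} produces $h \in G_0 \subset G$ such that $g' := hgh^{-1} \in \Delta_n$, and by the well-definedness of $\delta_t$ asserted in Lemma \ref{lem:delta-SLn}(1), one may compute $\delta_t(g) = h^{-1}\sigma_t(g')h$ with this choice. Since conjugation by $h \in G$ preserves $G$, both claims reduce to showing (a) $\sigma_t(g') \in G$ for every $t \in [0,1]$, and (b) $\sigma_1(g') \in K$.

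The central tool is the polar decomposition. Writing $g' = \mathsf{diag}(z_1,\ldots,z_n)$ with $z_j = r_j e^{i\theta_j}$, set $u := \mathsf{diag}(e^{i\theta_1},\ldots,e^{i\theta_n})$ and $p := \mathsf{diag}(r_1,\ldots,r_n)$. Since $\det g' = 1$, both factors have determinant one, and $g' = up$ is the $\SL(n,\C)$-polar decomposition. Because Section 2 arranges the embedding with $K \subset \SU(n)$, the Cartan decomposition $G = K \cdot \exp(i\mathfrak{k})$ is compatible with the ambient $\SL(n,\C) = \SU(n) \cdot \exp(i\mathfrak{su}(n))$, and the uniqueness of the latter forces $u \in K$ and $p \in \exp(i\mathfrak{k}) \subset G$. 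Writing $p = \exp(X)$ for some $X \in i\mathfrak{k}$, one obtains $p^{1-t} = \exp((1-t)X) \in \exp(i\mathfrak{k}) \subset G$ for every $t \in [0,1]$. A direct computation on diagonal matrices identifies $\sigma_t(g') = u \cdot p^{1-t}$, which therefore lies in $K \cdot \exp(i\mathfrak{k}) \subset G$, proving (a). At $t = 1$, the real factor collapses to the identity and $\sigma_1(g') = u \in K$, proving (b).

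Conjugating back, $\delta_t(g) = h^{-1}\sigma_t(g')h \in G$, and because $\delta_t(g)$ is $\SL(n,\C)$-conjugate to an element of $\Delta_n$ it is semisimple in $\SL(n,\C)$, hence also in $G$ (the Jordan decomposition is preserved under the closed embedding $G \hookrightarrow \SL(n,\C)$); this establishes (1). For (2), setting $\tilde h := h^{-1} \in G$ gives $\delta_1(g) = \tilde h \cdot u \cdot \tilde h^{-1} \in G(K)$ by definition of $G(K)$. The main obstacle is the polar-compatibility step: one must know that when an element $g' \in G$ is factored in $\SL(n,\C)$ as $g' = u p$, both pieces remain in $G$, i.e.\ that $u \in K$ and $p \in \exp(i\mathfrak{k})$. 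This is the only nontrivial structural input and rests entirely on the joint compatibility of $G \subset \SL(n,\C)$ and $K \subset \SU(n)$ fixed at the outset of Section 4.
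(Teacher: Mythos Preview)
Your argument is correct and is in fact more direct than the paper's own proof. The key difference is the level at which the polar-decomposition compatibility is invoked.

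The paper does not apply the polar decomposition to $g' = hgh^{-1}$ directly. Instead it passes to the $m$-th power (where $m$ is the order of the component of $g$ in $\pi_0(G)$), so that $hg^mh^{-1}$ lands in the maximal torus $T_0 \subset G_0$. It then checks, essentially by your polar argument restricted to $T_0$, that $\sigma_t$ preserves $T_0$. From $(h\delta_t(g)h^{-1})^m = \sigma_t(hg^mh^{-1}) \in T_0$ it follows that $h\delta_t(g)h^{-1} \in p_m^{-1}(T_0)$, and a connected-components argument (Lemma \ref{lem:power-map-subtorus}) shows that the component containing $hgh^{-1}$ is a coset of $T_0$ inside $G$, forcing $\delta_t(g) \in G$ for all $t$.

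Your route sidesteps the power map and the coset/component analysis entirely: you use that the global Cartan decomposition $G = K\cdot\exp(i\mathfrak{k})$ is compatible with the ambient polar decomposition of $\SL(n,\C)$ once $K \subset \SU(n)$, so the polar factors of \emph{any} $g' \in G \cap \Delta_n$ already lie in $G$. This is correct (it is a form of Mostow's theorem on self-adjoint algebraic subgroups, valid also for disconnected $G$ since $K$ meets every component of $G$), and it makes the argument noticeably shorter. The paper's version has the mild advantage that it only needs this compatibility on the connected torus $T_0$, where it is immediate to verify by hand; your version trades that elementary check for a single appeal to the global structural fact you flag as the ``main obstacle.'' Both approaches are valid; yours is cleaner provided one is willing to cite the global polar compatibility rather than rederive it.
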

\begin{proof}
This was shown in \cite{PeSo}. But see also Appendix \ref{sub:A2},
for a self-contained independent proof. 
\end{proof}
Finally, define $\delta^{r}:[0,1]\times G_{ss}^{r}\to G_{ss}^{r}$
by $\delta_{t}^{r}(g_{1},...,g_{r})=(\delta_{t}(g_{1}),...,\delta_{t}(g_{r}))$
and let $G$ act on $G_{ss}^{r}$ by simultaneous conjugation in each
factor. 
\begin{cor}
\label{defretract} $\delta^{r}:[0,1]\times G_{ss}^{r}\to G_{ss}^{r}$
is a $G$-equivariant deformation retraction onto $G(K)^{r}$ that
preserves torsion and commutativity. \end{cor}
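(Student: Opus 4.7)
The plan is to verify, property by property, that all the desired features of $\delta^r$ descend from the corresponding facts already established for the single-factor map $\delta$ in Lemmas \ref{lem:delta-SLn} and \ref{lem:Appendix2}. The corollary is essentially a packaging statement, so the work consists in checking that each property survives the factorwise product. The only mildly subtle point is verifying pointwise-fixing of $G(K)$, needed for the strong half of the SDR definition; the rest is bookkeeping.

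First I would handle continuity and targets. Since $\delta$ is continuous on $[0,1]\times G_{ss}$ and $\delta_t(G_{ss})\subset G_{ss}$ by Lemma \ref{lem:Appendix2}(1), the product map $\delta^r$ is continuous and lands in $G_{ss}^r$. The equality $\delta^r_0=\mathrm{id}$ is immediate from $\sigma_0=\mathrm{id}_{\Delta_n}$, and Lemma \ref{lem:Appendix2}(2) applied coordinatewise gives $\delta^r_1(G_{ss}^r)\subset G(K)^r$.

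Next I would verify that $\delta^r_t$ fixes $G(K)^r$ pointwise for every $t\in[0,1]$. By the $\SL(n,\mathbb{C})$-equivariance of $\delta_t$ from Lemma \ref{lem:delta-SLn}(4), it suffices to show $\delta_t(k)=k$ for each $k\in K$. Such a $k$ is unitary, so one can choose $h\in\SL(n,\mathbb{C})$ with $hkh^{-1}\in\Delta_n$ and all diagonal entries of modulus one; then $\sigma_t$ acts trivially on $hkh^{-1}$, since $|z_i|^{-t}z_i=z_i$ when $|z_i|=1$, giving $\delta_t(k)=k$. Combined with the previous paragraph, this establishes that $\delta^r$ is a strong deformation retraction of $G_{ss}^r$ onto $G(K)^r$.

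Finally, the three remaining assertions transfer componentwise. The $G$-equivariance of $\delta^r$ is the restriction along $G\subset\SL(n,\mathbb{C})$ of Lemma \ref{lem:delta-SLn}(4), applied simultaneously in each factor under the diagonal conjugation action. Preservation of commutativity of an $r$-tuple follows pair by pair from Lemma \ref{lem:delta-SLn}(5). Preservation of torsion follows from Lemma \ref{lem:delta-SLn}(6): if $g_i^{m_i}=\mathbf{1}$, then $\delta_t(g_i)^{m_i}=\delta_t(g_i^{m_i})=\delta_t(\mathbf{1})=\mathbf{1}$, where the last equality uses that $\mathbf{1}\in K$ is fixed by $\delta_t$. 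This completes all the claims of the corollary.
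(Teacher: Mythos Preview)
Your proof is correct and follows the same route as the paper, which simply states that the result follows immediately from the preceding lemmas. You have spelled out the componentwise verification in more detail than the paper does; in particular, your explicit check that $\delta_t$ fixes $G(K)$ pointwise is exactly the content of Lemma~\ref{lem:delta-SLn}(3) as proved in the appendix (where it is shown that $\delta_t$ is the identity on $\SU(n)$ and hence, by equivariance, on all conjugates), so nothing new is needed there.
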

\begin{proof}
The result follows immediately from the the last lemmas. \end{proof}
\begin{rem}
A version of Corollary \ref{defretract} also appears in \cite{PeSo}.
The fact that $\delta^{r}$ preserves torsion is only verified here,
and the fact that it applies to disconnected groups $G$ was independently
discovered by us and the authors of \cite{PeSo} (see also Appendix
\ref{appendix}). 

\label{realcase:step4} The mapping $\delta_{t}$ is also equivariant
with respect to complex conjugation. Therefore it restricts to a strong
deformation retraction on the corresponding fixed locus of complex
conjugation. Therefore Corollary \ref{defretract} extends to the
case of $G(\R)$ and $K(\R)$, since $G(\R)$ is the complex conjugation
fixed point locus of $G$, and likewise $G(\R)(K(\R))$ is the complex
conjugation fixed locus of $G(K)$. 
\end{rem}

\subsection{Proof of the Main Theorem}

\begin{sloppypar} Recall that Theorem \ref{thm:Main} states that
$\X_{\Gamma}(G)$ strongly deformation retracts to $\X_{\Gamma}(K)$
for any complex or real reductive algebraic group $G$, a maximal
compact subgroup $K\subset G$, and any finitely generated Abelian
group $\Gamma$.
\begin{proof}
{[}Proof of Theorem \ref{thm:Main}{]} We handle the complex case
first. Suppose that $\Gamma$ is generated by $r$ elements, as has
been our convention. Proposition \ref{polystablelemma} and Lemma
\ref{semisimplelemma} imply that $\hom(\Gamma,G)\quot G\cong\hom(\Gamma,G)^{ps}/G\cong\hom(\Gamma,G_{ss})/G$,
and Proposition \ref{gkembeds} and Lemma \ref{ghomkhomgk} imply
that $\hom(\Gamma,K)/K\cong\hom(\Gamma,G(K))/G$. In Subsection \ref{subsect41},
we proved that the mapping $\delta_{t}^{r}:\hom(\Gamma,G_{ss})\to\hom(\Gamma,G_{ss})$
is a $G$-equivariant strong deformation retraction onto $\hom(\Gamma,G(K))$.
Therefore, we have a strong deformation retraction from $\hom(\Gamma,G_{ss})/G$
onto $\hom(\Gamma,G(K))/G$. Therefore, $\X_{\Gamma}(G)\cong\hom(\Gamma,G_{ss})/G$
strongly deformation retracts onto $\X_{\Gamma}(K)\cong\hom(\Gamma,G(K))/G$.

The real case follows from Remarks \ref{realcase:step1}, \ref{realcase:step2},
\ref{realcase:step3}, and \ref{realcase:step4}. 
\end{proof}
\end{sloppypar}
\begin{rem}
Theorem \ref{thm:Main} corrects and generalizes the proof of Proposition
$7.1$ in \cite{FlLa}. In that paper, the authors consider free Abelian
groups as an example to situate the main theorems for free groups
(non-Abelian) of that work. The statement and its proof are correct
in some cases, for example when $G$ is the general linear group or
special linear group. After this work was completed and uploaded in
arXiv, M. Bergeron shared with us his article \cite{Ber}, which proves,
by different techniques, a generalization of Theorem \ref{thm:Main}
to nilpotent groups.
\end{rem}
Since a deformation retraction is in particular a homotopy equivalence,
the spaces $\X_{\Gamma}(G)$ and $\X_{\Gamma}(K)$ have the same homotopy
type. So, Theorem \ref{thm:Main} implies they have the same homotopy
groups $\pi_{*}(\X_{\Gamma}(K))\cong\pi_{*}(\X_{\Gamma}(G))$, and
cohomology rings $H^{*}(\X_{\Gamma}(K))\cong H^{*}(\X_{\Gamma}(G))$
for all values of $*$. Moreover, these results do not depend on the
choices made to define $\delta$ since different choices lead to homeomorphic
spaces. In particular, these spaces have the same number of path components.
Recall that we are using the Euclidean topology on both $\X_{\Gamma}(G)$
and $\X_{\Gamma}(K)$. Also, note that the path components in these
spaces are the same as the connected components, since the two notions
are equivalent in the context of semi-algebraic sets (which are always
locally path connected), such as $\X_{\Gamma}(K)$. The following
is immediate. 
\begin{cor}
\label{cor:bijection-comp}There is a natural bijection between the
connected components of $\X_{\Gamma}(G)$ and those of $\X_{\Gamma}(K)$.
Corresponding components have the same homotopy type, and the deformation
retraction preserves each component of $\X_{\Gamma}(G)$. 
\end{cor}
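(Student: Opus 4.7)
The plan is to derive everything directly from the strong deformation retraction $\phi:[0,1]\times \X_{\Gamma}(G)\to\X_{\Gamma}(G)$ furnished by Theorem \ref{thm:Main}, together with the fact that both spaces are semi-algebraic, so that connectedness and path-connectedness agree. The key observation is that for each $x\in\X_{\Gamma}(G)$, the assignment $t\mapsto \phi_t(x)$ is a continuous path in $\X_{\Gamma}(G)$ joining $x$ to $\phi_1(x)\in\X_{\Gamma}(K)$, and hence $x$ and $\phi_1(x)$ lie in the same path component. This already yields the third assertion: the deformation retraction preserves each component of $\X_{\Gamma}(G)$, since at every time $t$ the point $\phi_t(x)$ is joined to $x$ by the path $s\mapsto\phi_s(x)$, $s\in[0,t]$.

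Next I would define the claimed bijection. Send a component $C\subset\X_{\Gamma}(G)$ to the component of $\X_{\Gamma}(K)$ containing $\phi_1(C)$; this is well-defined because $\phi_1$ is continuous and $C$ is connected, so $\phi_1(C)$ lies in a single component of $\X_{\Gamma}(K)$. Surjectivity is immediate from the inclusion $\X_{\Gamma}(K)\hookrightarrow\X_{\Gamma}(G)$ together with the retraction property $\phi_1|_{\X_{\Gamma}(K)}=\mathrm{id}$: a component $D\subset\X_{\Gamma}(K)$ is the image of the component of $\X_{\Gamma}(G)$ that contains it. For injectivity, suppose $D_1,D_2\subset\X_{\Gamma}(K)$ lie in the same component of $\X_{\Gamma}(G)$; pick points $y_i\in D_i$ and a path $\gamma:[0,1]\to\X_{\Gamma}(G)$ from $y_1$ to $y_2$. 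Applying $\phi_1$ yields a continuous path $\phi_1\circ\gamma:[0,1]\to\X_{\Gamma}(K)$ connecting $y_1=\phi_1(y_1)$ to $y_2=\phi_1(y_2)$, showing $D_1=D_2$.

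Finally, for the equality of homotopy types of corresponding components, I would simply restrict $\phi$ to $[0,1]\times C$ for each component $C$ of $\X_{\Gamma}(G)$. Because the deformation preserves $C$ (as proved in the first paragraph), this restriction lands in $C$ and defines a strong deformation retraction from $C$ onto $C\cap \X_{\Gamma}(K)$, which by the bijection established above is precisely the matching component of $\X_{\Gamma}(K)$. A strong deformation retraction is in particular a homotopy equivalence, so the two corresponding components share the same homotopy type.

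There is no real obstacle here beyond bookkeeping: everything is a formal consequence of the existence of the SDR together with the local path-connectedness of semi-algebraic sets (which ensures that $\pi_0$ of each space is the set of connected components, not merely of quasi-components). The only point requiring a little care is checking that the map on $\pi_0$ is induced by $\phi_1$ in a well-defined, bijective way, and that the restriction of $\phi$ to a single component remains a strong deformation retraction of that component onto its image.
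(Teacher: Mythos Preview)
Your proposal is correct and matches the paper's approach: the paper simply declares the corollary ``immediate'' after noting that the SDR is a homotopy equivalence and that semi-algebraic sets are locally path-connected, so your argument is a faithful expansion of what the authors leave implicit. One minor wording issue: in your injectivity paragraph you actually prove that the inclusion-induced map $\pi_0(\X_\Gamma(K))\to\pi_0(\X_\Gamma(G))$ is injective, rather than directly that your forward map $C\mapsto\text{(component of }\phi_1(C)\text{)}$ is injective; this still suffices, since your first paragraph already shows the inclusion-induced map is surjective, but you may want to rephrase to make the logic transparent.
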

In particular, for every representation, there is another one in the
same component, but taking values in $K$ as we next show.
\begin{cor}
Let $G=K_{\C}$ be a complex reductive algebraic group and let $\Gamma$
be a finitely generated Abelian group. If $C_{\C}\subset\X_{\Gamma}(G)$
denotes a path component, then there exists $\rho\in\hom(\Gamma,K)$
such that $[\![\rho]\!]\in C_{\C}$.\end{cor}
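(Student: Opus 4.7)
The plan is to invoke Theorem \ref{thm:Main} directly, using the embedding $\iota:\X_\Gamma(K)\hookrightarrow \X_\Gamma(G)$ supplied by Proposition \ref{kmodsubsetgmod}, which sends a $K$-orbit $[\rho]_K$ to its extended $G$-orbit $[\![\rho]\!]$. Under this embedding, the statement is equivalent to saying that $\iota(\X_\Gamma(K))$ meets every path component $C_\C$ of $\X_\Gamma(G)$.

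First I would fix any representative $[\![\rho_0]\!]\in C_\C$ and feed it into the strong deformation retraction $\phi:[0,1]\times \X_\Gamma(G)\to \X_\Gamma(G)$ onto $\iota(\X_\Gamma(K))$ produced by the proof of Theorem \ref{thm:Main}. Because $\phi$ is continuous in $t$, the assignment $t\mapsto \phi_t([\![\rho_0]\!])$ is a continuous path in $\X_\Gamma(G)$ starting at $[\![\rho_0]\!]\in C_\C$, so its entire image remains in the path component $C_\C$. At $t=1$ the endpoint $\phi_1([\![\rho_0]\!])$ lies in $\iota(\X_\Gamma(K))$ by property (3) of an SDR, so there exists $[\rho]_K\in\X_\Gamma(K)$ with $\iota([\rho]_K)=\phi_1([\![\rho_0]\!])$. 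Unwinding the definition of $\iota$, this reads $[\![\rho]\!]=\phi_1([\![\rho_0]\!])\in C_\C$, and the representative $\rho\in\hom(\Gamma,K)$ is the desired one.

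There is essentially no serious obstacle; the result is a direct unfolding of Theorem \ref{thm:Main} once one identifies $\X_\Gamma(K)$ with its image under $\iota$ inside $\X_\Gamma(G)$. The only point that requires care is book-keeping: points of $\X_\Gamma(G)$ are extended orbits $[\![\,\cdot\,]\!]$ while points of $\X_\Gamma(K)$ are ordinary $K$-orbits $[\,\cdot\,]_K$, and $\iota$ is what translates between the two descriptions. Alternatively, one could appeal to Corollary \ref{cor:bijection-comp}, which already gives a bijection between path components of $\X_\Gamma(G)$ and of $\X_\Gamma(K)$, and then simply choose any $\rho\in\hom(\Gamma,K)$ whose $K$-orbit lies in the component of $\X_\Gamma(K)$ matched with $C_\C$.
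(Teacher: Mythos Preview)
Your proposal is correct and takes essentially the same approach as the paper: the paper's proof simply observes that Theorem \ref{thm:Main} forces every path component $C_\C\subset\X_\Gamma(G)$ to contain a component $C\subset\X_\Gamma(K)$, which is exactly what your SDR-path argument (and your alternative via Corollary \ref{cor:bijection-comp}) establishes in slightly more explicit detail.
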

\begin{proof}
Theorem \ref{thm:Main} implies that for every component $C_{\C}\subset\X_{\Gamma}(C)$,
there exists a component $C\subset\X_{\Gamma}(K)$ such that $C\subset C_{\C}$.
The result follows. 
\end{proof}
Let us write $|X|$ for the number of path components of a locally
path-connected space. 
\begin{lem}
Let $G$ be a connected reductive algebraic group and $X$ an affine
$G$-variety. Then $|X|=|X\quot G|$.\end{lem}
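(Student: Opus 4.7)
The plan is to reduce the statement to the assertion that the GIT quotient of a \emph{connected} affine $G$-variety is itself connected, and that the number of components is additive under the natural clopen decomposition of $X$ into its components.

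First, I would observe that since $G$ is connected, every orbit $G\cdot x\subset X$ is the continuous image of the connected space $G$ under the action map, hence connected. Consequently each Euclidean-connected component of $X$ is automatically $G$-invariant, for otherwise some orbit would cross the boundary between two components. Since $X$ is an affine algebraic set it has only finitely many connected components $X_{1},\dots,X_{n}$, and these are simultaneously Zariski- and Euclidean-clopen, and hence themselves affine $G$-subvarieties.

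Next I would exploit the idempotent ring decomposition $\C[X]\cong\prod_{i=1}^{n}\C[X_{i}]$, which is $G$-equivariant by the $G$-invariance of the $X_{i}$. Since forming $G$-invariants commutes with finite direct products of $G$-algebras, $\C[X]^{G}\cong\prod_{i=1}^{n}\C[X_{i}]^{G}$, and applying $\mathrm{Spec}_{max}$ gives an isomorphism of affine varieties $X\quot G\cong\bigsqcup_{i=1}^{n}X_{i}\quot G$. Therefore $|X\quot G|=\sum_{i}|X_{i}\quot G|$, and it suffices to show that each $X_{i}\quot G$ is nonempty and connected. This last step is straightforward: the GIT projection $\pi_{i}:X_{i}\to X_{i}\quot G$ is continuous and surjective, so the continuous image of the connected space $X_{i}$ is connected, giving $|X_{i}\quot G|=1$ and hence $|X\quot G|=n=|X|$.

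The main obstacle, modest though it is, lies in justifying the second step: one must verify that the Euclidean-connected decomposition of $X$ really does yield a direct-product decomposition of the coordinate ring. This rests on the standard fact that each irreducible component of a complex affine variety is Euclidean-connected, so the Euclidean-connected components of $X$ are precisely the maximal unions of irreducible components meeting in common points; in particular they are Zariski-clopen subvarieties, and the algebraic and topological pictures are compatible. Once this alignment is in place, everything else is formal.
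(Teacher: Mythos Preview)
Your argument is correct, but it takes a genuinely different route from the paper's. The paper proceeds purely topologically: since the quotient map $\pi:X\to X\quot G$ is a continuous surjection one has $|X|\geq|X\quot G|$; if strict inequality held, two components $A,B$ of $X$ would map to the same component of $X\quot G$, and then some point $b\in B$ would lie in the Euclidean closure $\overline{G\cdot a}$ for some $a\in A$. But $G$ connected forces $\overline{G\cdot a}$ to be connected, hence contained in the closed set $A$, a contradiction. Your approach instead passes through the coordinate ring: you observe that the $G$-invariant Euclidean components $X_i$ are Zariski-clopen affine $G$-subvarieties, split $\C[X]\cong\prod_i\C[X_i]$ $G$-equivariantly, take invariants, and read off $X\quot G\cong\bigsqcup_i X_i\quot G$ with each piece connected. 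The paper's argument is shorter and needs nothing beyond the connectedness of orbit closures and the closedness of components; yours requires the standard but nontrivial alignment of Euclidean and Zariski components of a complex affine variety, but in return yields the stronger structural statement that the GIT quotient literally decomposes as a disjoint union of the GIT quotients of the components.
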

\begin{proof}
Since the quotient map $\pi:X\to X\quot G$ is continuous, a given
path component is mapped to a single path component. Being also surjective,
we obtain $|X|\geq|X\quot G|$. Suppose, by contradiction that inequality
holds. Then, at least two path components, say $A,B\subset X$, are
being identified by the action of $G$ or by the further GIT equivalence.
This means that there are $a\in A$, $b\in B$, and a sequence $\{g_{n}\}\subset G$
such that ${\displaystyle \lim_{n\to\infty}g_{n}\cdot a=b}$ (or possibly
with the roles of $a$ and $b$ reversed). Note this includes the
possibility that $b$ is in the $G$-orbit of $a$ by considering
the constant sequence. Returning to the argument, we conclude $b\in\overline{G\cdot a}^{E}$.
Since $G$ is connected, the orbit $G\cdot a=\{g\cdot a:\ g\in G\}$
is connected, and thus $\overline{G\cdot a}^{E}$ is also connected.
Therefore, $\overline{G\cdot a}^{E}\subset\overline{A}^{E}=A$ since
$A$ is closed. This gives a contradiction as $A$ and $B$ are disjoint
in $X$. \end{proof}
\begin{rem}
A similar argument shows that if $X$ is a $G$-space with $G$ connected
then $|X|=|X/G|$. 
\end{rem}
In particular, using Corollary \ref{cor:bijection-comp}, we have
proved the following proposition. 
\begin{prop}
\label{pathcomponents} Let $G$ be a connected complex reductive
algebraic group with maximal compact subgroup $K$, and let $\Gamma$
be a finitely generated Abelian group. Then, $|\hom(\Gamma,G)|=|\X_{\Gamma}(G)|=|\X_{\Gamma}(K)|=|\hom(\Gamma,K)|$. 
\end{prop}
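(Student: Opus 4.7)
The plan is to obtain the chain of equalities by concatenating three independent observations, all essentially already at hand. First I would establish $|\hom(\Gamma,G)|=|\X_\Gamma(G)|$ by applying the immediately preceding lemma to the affine $G$-variety $X=\hom(\Gamma,G)$ with $G$ acting by simultaneous conjugation; since $\X_\Gamma(G)=\hom(\Gamma,G)\quot G$ by definition and $G$ is connected by hypothesis, the lemma delivers this equality directly.

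Next I would invoke Corollary \ref{cor:bijection-comp}, which is a consequence of the strong deformation retraction furnished by Theorem \ref{thm:Main} and produces an explicit bijection between the path components of $\X_\Gamma(G)$ and those of $\X_\Gamma(K)$; this yields $|\X_\Gamma(G)|=|\X_\Gamma(K)|$. Note that we are free to use path components here since all the spaces in question are semi-algebraic, hence locally path-connected, and for such spaces path components coincide with connected components.

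For the final equality $|\X_\Gamma(K)|=|\hom(\Gamma,K)|$, I would apply the Remark following the lemma to $K$ acting on $\hom(\Gamma,K)$ by conjugation, giving $|\hom(\Gamma,K)|=|\hom(\Gamma,K)/K|=|\X_\Gamma(K)|$. The only subtle point, and hence the main thing to verify, is that this Remark is applicable: it requires the acting group $K$ to be connected. This is the standard fact that any maximal compact subgroup of a connected complex reductive algebraic group is itself connected (one route is the Cartan/polar decomposition $G\cong K\times\mathfrak{p}$ as real analytic manifolds, which forces $\pi_{0}(K)=\pi_{0}(G)$). Once this is in place, the three equalities concatenate to prove the proposition; no further computation is required.
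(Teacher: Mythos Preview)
Your proof is correct and follows essentially the same approach as the paper: apply the preceding lemma for $|\hom(\Gamma,G)|=|\X_\Gamma(G)|$, Corollary \ref{cor:bijection-comp} for $|\X_\Gamma(G)|=|\X_\Gamma(K)|$, and the Remark for $|\X_\Gamma(K)|=|\hom(\Gamma,K)|$. You even make explicit the point the paper leaves implicit, namely that the Remark requires $K$ to be connected, which follows from the connectedness of $G$.
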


\subsection{\label{sub:RAAG}Right angled Artin groups}

We end this section with a conjecture, based on a remark mentioned
in \cite{PeSo}. Let us define a right-angled Artin group (RAAG) with
torsion to be a finitely generated group which admits a finite presentation
with generators $\gamma_{1},\cdots,\gamma_{r}$ and where every relation
is either $\gamma_{i}\gamma_{j}=\gamma_{j}\gamma_{i}$, or a torsion
relation of the form $\gamma_{j}^{m}=1$ (for some $i,j$ and $m\geq2$).
These groups include free products of cyclic groups and Abelian groups
as extremes. Combining the main result of \cite{FlLa} with Theorem
\ref{thm:Main}, we have shown that there is strong deformation retraction
from $\X_{\Gamma}(G)$ to $\X_{\Gamma}(K)$ when $\Gamma$ is either
a free product of infinite cyclic groups (a free group), or when $\Gamma$
is Abelian. 
\begin{conjecture}
Let $G$ be a complex reductive algebraic group and let $K$ be a
maximal compact subgroup. Let $\Gamma$ be a right-angled Artin group
with torsion. There is strong deformation retraction from $\X_{\Gamma}(G)$
to $\X_{\Gamma}(K)$. 
\end{conjecture}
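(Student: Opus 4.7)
The plan is to follow the same three-stage argument as for Theorem \ref{thm:Main}, now exploiting the key observation that a presentation of a RAAG with torsion uses only two kinds of relations---pairwise commutation and finite order---and that these are precisely the two algebraic properties preserved factor-wise by the deformation $\delta^r_t$ of Corollary \ref{defretract}, via items $(5)$ and $(6)$ of Lemma \ref{lem:delta-SLn}.

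The first, and easiest, part is the homotopy itself. For any $(g_1,\dots,g_r)\in G_{ss}^r$, the map $\delta^r_t$ preserves every commutation relation $g_ig_j=g_jg_i$ and every torsion relation $g_j^m=\mathbf{1}$ at every $t$. Hence, if $\Gamma$ is a RAAG with torsion presented on generators $\gamma_1,\dots,\gamma_r$ and $\rho\in\hom(\Gamma,G)$ takes values in $G_{ss}$ on these generators, then $(t,\rho)\mapsto\delta^r_t\circ\rho$ is a well-defined $G$-equivariant homotopy inside $\hom(\Gamma,G_{ss})$ from $\rho$ to a representation taking values in $G(K)$ on the generators. The real work is to identify the two endpoints of this homotopy with $\X_\Gamma(G)$ and $\X_\Gamma(K)$ respectively.

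For the source endpoint, one needs the RAAG-with-torsion analog of Proposition \ref{semisimplelemma}, namely
\[
\hom(\Gamma,G)^{ps}/G \;=\; \hom(\Gamma,G_{ss})/G.
\]
The Abelian proof used that $\overline{\rho(\Gamma)}^Z$ is itself Abelian, so that Richardson's theorem reduced the question to diagonalizable groups. For RAAGs the Zariski closure need not be Abelian, so instead one would invoke the generalized Jordan decomposition of \cite{PeSo} (as flagged in the Introduction) to assign compatible semisimple and unipotent parts to every element of $\rho(\Gamma)$ in a way that respects both the commutation and torsion relations; this should force each $\rho(\gamma_i)$ to be semisimple whenever $\rho$ is polystable.

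The main obstacle is the analog of Lemma \ref{ghomkhomgk}, i.e.\ the identification
\[
G(\hom(\Gamma,K)) \;=\; \hom(\Gamma,G(K)),
\]
which together with Proposition \ref{gkembeds} would identify the $t=1$ image with $\hom(\Gamma,K)/K$. In the Abelian case this relied on the Zariski closure $A$ of the image being Abelian and thus possessing a \emph{unique} maximal compact subgroup, into which all the $\rho(\gamma_i)$ automatically fell and which a single $g\in G$ could conjugate into $K$. For a RAAG with torsion, $A$ is no longer Abelian and different generators may, a priori, lie in different and non-conjugate maximal compact subgroups of $A$. My plan is to attempt an induction on the number of vertices in the defining graph, peeling off one generator at a time and using the commutation edges to force the remaining generators into the centralizer of the removed one, reducing to the Abelian case on commuting cliques; the torsion-free analog in \cite{PeSo} provides a template, but the interaction between commutation edges and torsion vertices introduces genuinely new phenomena, which is what makes the statement a conjecture rather than a theorem at this point.
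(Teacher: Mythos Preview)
The statement is a \emph{conjecture} in the paper, so there is no proof to compare against; the paper only proves partial results in its direction (Theorem~\ref{raag} and the theorem following it). Your proposal is therefore best read as a strategy, and it contains one genuine error and one correct diagnosis.

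The error is in your ``source endpoint'' step. You propose to show
\[
\hom(\Gamma,G)^{ps}/G \;=\; \hom(\Gamma,G_{ss})/G
\]
for a RAAG with torsion, as a direct analog of Proposition~\ref{semisimplelemma}. This is false already for free groups, which are RAAGs: Remark~\ref{sikora} exhibits an $\mathsf{F}_2$-representation in $\hom(\Gamma,G_{ss})$ that is not polystable, and an $\mathsf{F}_3$-representation that is polystable but not in $\hom(\Gamma,G_{ss})$. So neither inclusion holds in general, and no appeal to the generalized Jordan decomposition can repair this identity. The paper circumvents this obstruction entirely: in Lemma~\ref{gtogs} it uses the \emph{approximate} Jordan decomposition of \cite{PeSo} to build a $G$-equivariant weak deformation retraction from $\hom(\Gamma,G)$ onto $\hom(\Gamma,G_{ss})$ (preserving commutativity and torsion), bypassing any comparison with the polystable locus. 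Composing with $\delta_t$ and passing to the GIT quotient then yields Theorem~\ref{raag}: a strong deformation retraction from $\X_\Gamma(G)$ onto $\hom(\Gamma,G(K))\quot G$.

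Your diagnosis of the remaining obstacle is correct and matches the paper's. What is missing for the full conjecture is precisely the RAAG analog of Lemma~\ref{ghomkhomgk}, i.e.\ the identification $G(\hom(\Gamma,K))=\hom(\Gamma,G(K))$, whose Abelian proof depended on the uniqueness of the maximal compact in an Abelian Zariski closure. The paper does not claim to resolve this; instead, in the theorem after Theorem~\ref{raag}, it shows that the conjecture would follow from the existence of a $K$-equivariant weak retraction from $\hom(\Gamma,G(K))$ to $\hom(\Gamma,K)$, and Remark~\ref{goodwillie} explains why the known retraction from \cite{PeSo} does not suffice.
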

Providing further evidence for this conjecture, we now use our main
theorem and the main result of \cite{PeSo} to prove two theorems.
First, we need a lemma. Recall that a weak deformation retraction
between a space $X$ and a subspace $A$ is a continuous family of
mappings $F_{t}:X\to X$, $t\in[0,1]$, such that $F_{0}$ is the
identity on $X$, $F_{1}(X)\subset A$, and $F_{t}(A)\subset A$ for
all $t$. 
\begin{lem}
\label{gtogs} Let $\Gamma$ be a finitely generated Abelian group,
and let $G$ be a complex reductive algebraic group with maximal compact
$K$. Then, there exists a $G$-equivariant weak deformation retraction
from $\hom(\Gamma,G)$ onto $\hom(\Gamma,G_{ss})$ that fixes $K$
during the retraction. \end{lem}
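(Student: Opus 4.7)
The plan is to deform each $\rho\in\hom(\Gamma,G)$ towards its ``semisimple part'' via the multiplicative Jordan decomposition, exploiting the Abelian structure of $\Gamma$ crucially. Fix generators $\gamma_{1},\ldots,\gamma_{r}$ of $\Gamma$ and write $\rho(\gamma_{i})=s_{i}u_{i}$ with $s_{i}$ semisimple, $u_{i}$ unipotent, and $s_{i}u_{i}=u_{i}s_{i}$. Because $\Gamma$ is Abelian, the elements $\rho(\gamma_{i})$ commute pairwise, so by the classical lemma on Jordan parts of commuting elements the collection $\{s_{j},u_{k}\}_{j,k}$ also consists of pairwise commuting elements. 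Thus $\gamma_{i}\mapsto s_{i}$ and $\gamma_{i}\mapsto u_{i}$ extend to commuting homomorphisms $\rho_{s},\rho_{u}\colon\Gamma\to G$ with $\rho=\rho_{s}\cdot\rho_{u}$. Writing $u_{i}=\exp(\nu_{i})$ with $\nu_{i}\in\mathfrak{g}$ nilpotent (the logarithm being a polynomial on unipotents), I would define
\[
F_{t}(\rho)(\gamma_{i}):=s_{i}\exp\bigl((1-t)\nu_{i}\bigr).
\]

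The defining properties would follow essentially by inspection: $F_{0}=\mathrm{id}$; $F_{1}(\rho)=\rho_{s}$ lies in $\hom(\Gamma,G_{ss})$; $F_{t}(\rho)$ is a homomorphism because all the $s_{j}$ and $\exp((1-t)\nu_{k})$ commute; torsion is preserved, since $\gamma_{i}^{m}=1$ forces $s_{i}^{m}u_{i}^{m}=\mathbf{1}$ and hence $u_{i}=\mathbf{1}$, $\nu_{i}=0$ (the only element that is both semisimple and unipotent being $\mathbf{1}$); and $G$-equivariance follows from conjugation-invariance of the Jordan decomposition and of $\exp$. Finally, if $\rho$ takes values in $K$---or more generally in $G_{ss}$---then every $\rho(\gamma_{i})$ is already semisimple (elements of a compact group are semisimple), so $u_{i}=\mathbf{1}$ and $F_{t}(\rho)=\rho$ for all $t$; this establishes both that $\hom(\Gamma,K)$ is fixed pointwise and that $F_{t}\bigl(\hom(\Gamma,G_{ss})\bigr)\subset\hom(\Gamma,G_{ss})$.

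The principal difficulty is joint continuity of $(t,\rho)\mapsto F_{t}(\rho)$: the pointwise multiplicative Jordan decomposition is not continuous on $G$ in general (for example, the family of matrices with diagonal entries $1+s,1$ and upper-right entry $1$ is semisimple for $s\neq0$ but unipotent at $s=0$, so the unipotent factor jumps discontinuously in the limit). This is the reason one should settle for only a \emph{weak} deformation retraction. I would resolve this following the strategy of Pettet--Souto \cite{PeSo}, reformulating the construction in terms of the Zariski closure $H_{\rho}:=\overline{\rho(\Gamma)}^{Z}$, which by (the proof of) Proposition \ref{semisimplelemma} is an Abelian algebraic subgroup of $G$ with a canonical internal splitting into semisimple and unipotent parts. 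The algebraicity of the exponential on unipotent radicals, combined with the rigid structure of Abelian algebraic groups, should allow $F_{t}$ to be expressed in a form that is jointly continuous in $\rho$ even across strata where the dimensions of these parts jump, while still restricting to the identity on $\hom(\Gamma,K)$.
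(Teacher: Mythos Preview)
Your skeleton is exactly right and matches the paper's: decompose each $\rho(\gamma_i)$ into commuting semisimple and unipotent parts, then contract the unipotent part via $\exp((1-t)\log(\cdot))$. You have also correctly located the one real obstacle, namely that $g\mapsto(g_s,g_u)$ is discontinuous on $G$.

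The gap is in how you propose to close it. Passing to the Zariski closure $H_\rho$ does not help: the Jordan decomposition of $\rho(\gamma_i)$ computed inside the commutative algebraic group $H_\rho$ coincides (by functoriality of Jordan decomposition) with the one computed in $G$, so you recover precisely the same discontinuous map $\rho\mapsto\rho_s$. The dimensions of $(H_\rho)_s$ and $(H_\rho)_u$ jump in exactly the same strata where $g\mapsto g_s$ jumps, and nothing about the internal splitting of $H_\rho$ smooths this out. Also, a small point of language: ``weak'' in \emph{weak deformation retraction} refers to the condition $F_t(A)\subset A$ replacing $F_t|_A=\mathrm{id}_A$; it does not license any relaxation of joint continuity, which is still required.

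What the paper (following Pettet--Souto) actually does is different from what you describe: it replaces the exact Jordan decomposition by the \emph{approximate Jordan decomposition} $g\mapsto(g_s^\epsilon,g_u^\epsilon)$, a genuinely continuous $G$-equivariant map with $g=g_s^\epsilon g_u^\epsilon$, $g_s^\epsilon\in G_{ss}$, and $g_u^\epsilon$ close to unipotent (spectral radius of $g_u^\epsilon-\mathbf{1}$ at most $\epsilon$). Continuity is purchased at the cost of $g_u^\epsilon$ not being exactly unipotent, which is why the resulting $F_t(g)=g_s^\epsilon\,\mathrm{Exp}((1-t)\mathrm{Log}(g_u^\epsilon))$ is only a \emph{weak} retraction onto $G_{ss}$. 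The torsion argument then has to be redone for the approximate decomposition (the paper uses the property that $g_s^\epsilon,g_u^\epsilon$ stay in any algebraic subgroup containing $g$, applied to the finite cyclic group $\langle g\rangle$), rather than following immediately as in your exact-decomposition sketch.
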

\begin{proof}
In \cite{PeSo} it is shown that for every $g\in G$ and every $\epsilon>0$,
there is a continuous $G$-equivariant mapping $g\mapsto(g_{s}^{\epsilon},g_{u}^{\epsilon})$
such that: 
\begin{enumerate}
\item $g=g_{s}^{\epsilon}g_{u}^{\epsilon}=g_{u}^{\epsilon}g_{s}^{\epsilon}$, 
\item if $h$ commutes with $g$ it also commutes with $g_{s}^{\epsilon}$
and $g_{u}^{\epsilon}$, 
\item $g_{s}^{\epsilon}\in G_{ss}$, and 
\item if $g\in H$ where $H$ is an algebraic subgroup, then $g_{s}^{\epsilon}$,
$g_{u}^{\epsilon}\in H$. 
\item the spectral radius of $g_{u}^{\epsilon}-\mathbf{1}$ is at most $\epsilon$. 
\end{enumerate}
This is called the \textit{approximate Jordan decomposition}, since
$g_{u}^{\epsilon}$ is not unipotent, but is approximately so by (5).
Using this, they show that for sufficiently small $\epsilon>0$, $F_{t}(g)=g_{s}^{\epsilon}\mathrm{Exp}((1-t)\mathrm{Log}(g_{u}^{\epsilon}))$
is a $G$-equivariant \textit{weak} deformation retraction from $G$
to $G_{ss}$ that preserves commutativity and point-wise fixes $K$
for all $t$. Note that $\mathrm{Exp}$ and $\mathrm{Log}$ are the
usual power series functions in terms of matrices, which makes sense
in this context since we are choosing an embedding of $G$ as a matrix
group. Returning to the proof, we consequently obtain a $G$-equivariant
weak deformation retraction from $\hom(\mathbb{Z}^{r},G)$ to $\hom(\mathbb{Z}^{r},G_{ss})$
that keeps $\hom(\mathbb{Z}^{r},K)$ point-wise fixed for all $t$.

To prove the lemma, it then suffices to prove that $F_{t}$ preserves
the algebraic torsion relations for all $t$. So suppose $g^{m}=\mathbf{1}$
for some $m$. Then since $H=\langle\mathbf{1}\rangle$ is an algebraic
subgroup, property (4) of approximate Jordan decomposition gives $(g^{m})_{s}^{\epsilon}=\mathbf{1}$
and $(g^{m})_{u}^{\epsilon}=\mathbf{1}$ . Therefore, $F_{t}(g^{m})=\mathbf{1}$.
On the other hand, since $g_{s}^{\epsilon}$ and $g_{u}^{\epsilon}$
commute, we have $\mathbf{1}=g^{m}=(g_{s}^{\epsilon})^{m}(g_{u}^{\epsilon})^{m}$,
and so $(g_{u}^{\epsilon})^{m}=\mathbf{1}$ if $(g_{s}^{\epsilon})^{m}=\mathbf{1}$.
Now considering the subgroup $H=\langle g\rangle=\{\mathbf{1},g,\cdots,g^{m-1}\}$,
which is algebraic since it is finite (it has at most $m$ elements,
as $g^{m}=\mathbf{1}$), property (4) again shows that $g_{s}^{\epsilon}$
is in $H$. Thus, $(g_{s}^{\epsilon})^{m}=\mathbf{1}$ since for all
$h\in H$, $h=g^{k}$ for some $k$, and so $h^{m}=(g^{m})^{k}=\mathbf{1}$.

Moreover, $G$-equivariance and the fact that $g_{s}^{\epsilon}$
and $g_{u}^{\epsilon}$ commute imply that $g_{s}^{\epsilon}$ commutes
with $F_{t}(g)$. Thus, \[
g_{s}^{\epsilon}\mathrm{Exp}((1-t)\mathrm{Log}(g_{u}^{\epsilon}))=\mathrm{Exp}((1-t)\mathrm{Log}(g_{u}^{\epsilon}))g_{s}^{\epsilon}\]
 for all $t$. We conclude: \begin{eqnarray*}
F_{t}(g)^{m} & = & (g_{s}^{\epsilon})^{m}(\mathrm{Exp}((1-t)\mathrm{Log}(g_{u}^{\epsilon})))^{m}\\
 & = & \mathbf{1}^{m}\mathrm{Exp}((1-t)\mathrm{Log}((g_{u}^{\epsilon})^{m}))\\
 & = & \mathrm{Exp}((1-t)\mathrm{Log}(\mathbf{1}))\\
 & = & \mathbf{1}.\end{eqnarray*}
Putting these observations with each other leads to $F_{t}(g^{m})=\mathbf{1}=F_{t}(g)^{m}$,
as desired. \end{proof}
\begin{thm}
\label{raag} Let $\Gamma$ be a RAAG with torsion, $G$ be a complex
reductive algebraic group and let $K$ be a maximal compact subgroup
of $G$ . Then $\X_{\Gamma}(G)$ strongly deformation retracts onto
$\hom(\Gamma,G(K))\quot G$ which fixes the subspace $\hom(\Gamma,K)/K$. 
\end{thm}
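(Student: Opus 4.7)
The plan is to bootstrap from the deformations already constructed for Abelian $\Gamma$: the weak retraction of Lemma \ref{gtogs} and the strong retraction of Corollary \ref{defretract}. The crucial observation is that both are built as coordinate-wise applications of a single map on $G$ (or $G_{ss}$), and the only properties actually used on the underlying group $\Gamma$ in pushing the map through $\hom(\Gamma,-)$ are preservation of commutativity and of torsion. Since a RAAG with torsion has, by definition, only such relations among its generators, the same coordinate-wise extensions will land inside $\hom(\Gamma,-)$ with no change.

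Concretely, I would first define $\widetilde F_t(\rho)(\gamma_i) := F_t(\rho(\gamma_i))$ for the map $F_t:G\to G$ built in the proof of Lemma \ref{gtogs}. Property (2) of the approximate Jordan decomposition (commutativity preservation) together with the torsion calculation already carried out in the proof of Lemma \ref{gtogs} (showing $g^m=\mathbf{1}\Rightarrow F_t(g)^m=\mathbf{1}$) ensures that $\widetilde F_t(\rho)$ is again a $\Gamma$-representation. Thus $\widetilde F$ is a $G$-equivariant weak deformation retraction from $\hom(\Gamma,G)$ onto $\hom(\Gamma,G_{ss})$ which pointwise fixes $\hom(\Gamma,K)$ throughout (because $F_t|_K=\mathrm{id}$). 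Applying the coordinate-wise map $\delta^r_t$ of Corollary \ref{defretract} in the same way yields a $G$-equivariant strong deformation retraction $\widetilde\delta_t$ from $\hom(\Gamma,G_{ss})$ onto $\hom(\Gamma,G(K))$. Concatenating ($H_s=\widetilde F_{2s}$ for $s\le 1/2$ and $H_s=\widetilde\delta_{2s-1}\circ\widetilde F_1$ for $s\ge 1/2$) gives a continuous $G$-equivariant deformation $H$ from the identity on $\hom(\Gamma,G)$ to a retraction onto $\hom(\Gamma,G(K))$, pointwise fixing $\hom(\Gamma,K)$ throughout (since $K\subset G(K)$ implies $\widetilde\delta_t$ acts as the identity on $\hom(\Gamma,K)$ as well).

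To descend to the GIT quotient, one uses that any continuous $G$-equivariant self-map carries Euclidean closures of $G$-orbits into closures of $G$-orbits, hence respects the extended-orbit equivalence, so $H$ induces a continuous $\overline H:[0,1]\times \X_\Gamma(G)\to \X_\Gamma(G)$ with $\overline H_1$ mapping into $\hom(\Gamma,G(K))\quot G$ and pointwise fixing $\hom(\Gamma,K)/K$. The main obstacle is really this last step: for non-Abelian $\Gamma$ we no longer have Proposition \ref{semisimplelemma} or Lemma \ref{ghomkhomgk} at our disposal, so one must separately verify that the GIT image of $\hom(\Gamma,G(K))$ is a well-defined subspace of $\X_\Gamma(G)$ and that the extended-orbit identifications do not collapse unexpectedly during the weak phase $\widetilde F_t$ (as Remark \ref{sikora} warns can occur in the non-Abelian setting). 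Once this verification is carried out, the concatenation above delivers the desired strong deformation retraction.
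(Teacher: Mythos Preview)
Your construction of the concatenated homotopy $H$ on $\hom(\Gamma,G)$ and its descent to $\X_\Gamma(G)$ is essentially what the paper does, and your observation that $G$-equivariant continuous maps respect extended-orbit equivalence is correct and makes the descent go through. The ``main obstacle'' you flag---that extended-orbit identifications might collapse unexpectedly---is therefore not the real issue; the descent is fine.

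The genuine gap is the final sentence. The concatenation $H$ is only a \emph{weak} deformation retraction onto $\hom(\Gamma,G(K))$: the first phase $\widetilde F_t$ comes from Lemma~\ref{gtogs}, which does not fix $G(K)$ (or even $G_{ss}$) pointwise, so the induced $\overline H_t$ on $\X_\Gamma(G)$ need not fix $\hom(\Gamma,G(K))\quot G$ pointwise for all $t$. You therefore cannot conclude directly that $\overline H$ is a \emph{strong} deformation retraction onto that subspace; you have only shown that the inclusion $\hom(\Gamma,G(K))\quot G\hookrightarrow \X_\Gamma(G)$ is a homotopy equivalence. The paper closes this gap by observing that both spaces are semi-algebraic, hence CW complexes, and that the weak retraction makes the inclusion a weak homotopy equivalence; it then invokes Whitehead's theorem to upgrade to a strong deformation retraction. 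Your proposal omits this step entirely and instead ends with ``once this verification is carried out,'' which leaves exactly the wrong verification unfinished: the obstacle is not the descent to the GIT quotient but the weak-to-strong upgrade.
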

\begin{sloppypar} 
\begin{proof}
By the above lemma there is a $G$-equivariant weak deformation retraction
from $G$ to $G_{ss}$ that fixes $K$, preserves torsion, and preserves
commutativity. Putting this together with the $G$-equivariant strong
deformation retraction from $G_{ss}$ to $G(K)$ that also fixes $K$,
preserves torsion, and preserves commutativity from Theorem \ref{thm:Main},
gives a $G$-equivariant weak deformation retraction from $G$ to
$G(K)$ that fixes $K$ and also preserves torsion and commutativity.

Let $\Gamma$ be generated by $r$ elements. The relations are either
torsion relations or commutativity relations. Applying the weak deformation
retraction from $G$ to $G(K)$ factor-wise to $\hom(\Gamma,G)\subset G^{r}$
gives a $G$-equivariant weak deformation retraction onto $\hom(\Gamma,G(K))$
that fixes $\hom(\Gamma,K)$ for all time.

Therefore, we obtain a weak deformation retraction from $\hom(\Gamma,G)/G$
onto $\hom(\Gamma,G(K))/G$ which contains $G(\hom(\Gamma,K))/G\cong\hom(\Gamma,K)/K$
as a fixed subspace.

Note that for each $t$ in $[0,1)$ both the map $F_{t}$ in Lemma
\ref{gtogs}, and the map $\delta_{t}$ in Lemma \ref{lem:Appendix2}
are homeomorphisms, and so since they are $G$-equivariant, they send
closed orbits to closed orbits. When $t=1$, continuity
and Proposition \ref{pro:polystable} imply the limit of closed orbits
in the polystable quotient corresponds to a closed orbit.

Thus, restricting this weak deformation retraction to the subspace
of closed orbits then determines a weak retraction from $\X_{\Gamma}(G)$
onto $\hom(\Gamma,G(K))\quot G$ which contains $\hom(\Gamma,K)/K$
as a fixed subspace.

Since this weak retraction establishes that the inclusion mapping
$\hom(\Gamma,G(K))\quot G\hookrightarrow\X_{\Gamma}(G)$ induces an
isomorphism on homotopy groups, and $\hom(\Gamma,G(K))\quot G$ is
a cellular sub-complex of $\hom(\Gamma,G)\quot G$ given they are
semi-algebraic sets, Whitehead's Theorem (see \cite{Hatcher}) implies
there is a strong deformation retraction from $\hom(\Gamma,G)\quot G$
onto $\hom(\Gamma,G(K))\quot G$ which contains $\hom(\Gamma,K)/K$
as a sub-complex. 
\end{proof}
\end{sloppypar} 
\begin{rem}
Given Lemmata \ref{semisimplelemma} and \ref{ghomkhomgk}, the above
theorem includes our main result as a special case since Abelian groups
are RAAG's with torsion. \end{rem}
\begin{thm}
Let $\Gamma$ be a RAAG with torsion, $G$ be a complex reductive
algebraic group and let $K$ be a maximal compact subgroup of $G$.
If there exists a $K$-equivariant weak retraction from $\hom(\Gamma,G(K))$
to $\hom(\Gamma,K)$, then $\X_{\Gamma}(G)$ strongly deformation
retracts onto $\X_{\Gamma}(K)$. \end{thm}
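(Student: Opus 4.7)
The plan is to invoke Theorem~\ref{raag} to reduce the problem and then finish via a Whitehead-theorem argument analogous to that at the end of the proof of Theorem~\ref{raag}. First, Theorem~\ref{raag} yields a strong deformation retraction $\X_{\Gamma}(G)\to\hom(\Gamma,G(K))\quot G$ that fixes $\X_{\Gamma}(K)=\hom(\Gamma,K)/K$, so it suffices to produce a strong deformation retraction from $\hom(\Gamma,G(K))\quot G$ onto $\X_{\Gamma}(K)$; composing the two gives the desired retraction of $\X_{\Gamma}(G)$ onto $\X_{\Gamma}(K)$.

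Second, let $R_{t}:\hom(\Gamma,G(K))\to\hom(\Gamma,G(K))$ be the hypothesized $K$-equivariant weak retraction, so that $R_{0}=\mathrm{id}$, $R_{1}(\hom(\Gamma,G(K)))\subset\hom(\Gamma,K)$, and $R_{t}(\hom(\Gamma,K))\subset\hom(\Gamma,K)$ for every $t\in[0,1]$. Since $K$ acts by conjugation on both source and target and $R_{t}$ is $K$-equivariant, it descends to a weak deformation retraction $\bar R_{t}:\hom(\Gamma,G(K))/K\to\hom(\Gamma,G(K))/K$ onto $\hom(\Gamma,K)/K$. In particular, the inclusion $\hom(\Gamma,K)/K\hookrightarrow\hom(\Gamma,G(K))/K$ becomes a weak homotopy equivalence.

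Third, consider the natural continuous surjection $q:\hom(\Gamma,G(K))/K\to\hom(\Gamma,G(K))\quot G$ sending a $K$-orbit to its extended $G$-orbit class; its restriction to $\hom(\Gamma,K)/K$ is the injection $\iota_{K}$ of Proposition~\ref{gkembeds}. Composing with $\bar R_{t}$ produces a homotopy $q\circ\bar R_{t}$ from $q$ (at $t=0$) to a map factoring as $\iota_{K}\circ\bar R_{1}$ through $\iota_{K}(\X_{\Gamma}(K))\subset\hom(\Gamma,G(K))\quot G$. Using this factorization together with the $K$-orbit-level weak equivalence from the previous step, one deduces that the inclusion $\iota_{K}:\X_{\Gamma}(K)\hookrightarrow\hom(\Gamma,G(K))\quot G$ induces isomorphisms on all homotopy groups and is thus a weak homotopy equivalence. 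Both spaces are semi-algebraic sets and hence CW complexes, with $\X_{\Gamma}(K)$ appearing as a sub-complex by Proposition~\ref{kmodsubsetgmod}, so Whitehead's theorem upgrades the weak equivalence to a strong deformation retraction, exactly as in the last step of the proof of Theorem~\ref{raag}.

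The principal obstacle is the passage from the $K$-equivariant retraction, which lives at the level of the representation variety, to a statement on the $G$-quotient: since $R_{t}$ is not assumed $G$-equivariant, it does not descend to a continuous self-map of $\hom(\Gamma,G(K))\quot G$, so a direct construction of the retraction at the GIT level is not available. The argument circumvents this by first constructing the weak retraction on the intermediate $K$-orbit space, then tracing it through the natural map $q$ to extract a weak homotopy equivalence of the inclusion into the $G$-quotient, and finally invoking the CW structure coming from semi-algebraicity together with Whitehead's theorem to upgrade this weak equivalence to a strong deformation retraction.
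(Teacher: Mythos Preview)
Your argument has a genuine gap in the third step, precisely at the point you yourself flag as ``the principal obstacle.'' You establish that the inclusion $i:\hom(\Gamma,K)/K\hookrightarrow\hom(\Gamma,G(K))/K$ is a homotopy equivalence, and that $q\simeq\iota_{K}\circ\bar R_{1}$. But from $\iota_{K}=q\circ i$ and the fact that $i_{*}$ is an isomorphism, all you get on homotopy groups is $(\iota_{K})_{*}=q_{*}\circ i_{*}$; hence $\iota_{K}$ is a weak equivalence \emph{if and only if} $q$ is. The homotopy $q\simeq\iota_{K}\circ\bar R_{1}$ adds nothing new, since $\bar R_{1}$ is just the homotopy inverse of $i$. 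So you still need to know that the natural map $q:\hom(\Gamma,G(K))/K\to\hom(\Gamma,G(K))\quot G$ is a weak homotopy equivalence, and nothing in your argument supplies this. For a general RAAG there is no reason to expect $K$-orbits and extended $G$-orbits in $\hom(\Gamma,G(K))$ to coincide, so $q$ collapses nontrivially and its homotopy behavior is not obvious.

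The paper resolves exactly this difficulty by bringing in Kempf--Ness theory: there is a $K$-stable subset $\mathcal{KN}\subset\hom(\Gamma,G)^{ps}$ with $\mathcal{KN}/K\cong\X_{\Gamma}(G)$ and a $K$-equivariant SDR of $\hom(\Gamma,G)$ onto $\mathcal{KN}$. This gives the missing bridge between the $K$-quotient and the GIT quotient: $\hom(\Gamma,G)/K$ is weakly homotopic to $\X_{\Gamma}(G)$. One then combines the $K$-equivariant weak retraction from $\hom(\Gamma,G)$ to $\hom(\Gamma,G(K))$ (from Theorem~\ref{raag}) with the hypothesized $K$-equivariant retraction to $\hom(\Gamma,K)$, passes to $K$-quotients, and concludes that $\X_{\Gamma}(G)$ and $\X_{\Gamma}(K)$ are weakly homotopy equivalent. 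The final upgrade to an SDR uses that $\hom(\Gamma,K)\subset\mathcal{KN}$ and a result from \cite{FlLa3}. Without the Kempf--Ness set (or some substitute comparing $K$-orbit spaces to GIT quotients), your route does not close.
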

\begin{proof}
The general Kempf-Ness theory (see \cite{FlLa3}) implies that for
any finitely generated $\Gamma$, $\hom(\Gamma,G)$ $K$-equivariantly
strongly deformation retracts onto a $K$-stable subspace $\mathcal{KN}\subset\hom(\Gamma,G)^{ps}$
such that $\mathcal{KN}/K\cong\X_{\Gamma}(G)$. Thus, $\hom(\Gamma,G)/K$
is weakly homotopic to $\X_{\Gamma}(G)$.

Theorem \ref{raag} implies that there exists a $K$-equivariant weak
deformation retraction from $\hom(\Gamma,G)$ onto $\hom(\Gamma,G(K))$.
However, by hypothesis there exists a $K$-equivariant weak retraction
from $\hom(\Gamma,G(K))$ onto $\hom(\Gamma,K)$. Putting this together
we obtain a weak retraction from $\hom(\Gamma,G)/K$ to $\X_{\Gamma}(K)$;
and so $\hom(\Gamma,G)/K$ is weakly homotopic to $\X_{\Gamma}(K)$.

Thus, $\X_{\Gamma}(G)$ is weakly homotopic to $\X_{\Gamma}(K)$.
The computation of the Kempf-Ness set in \cite{FlLa} implies that
$\X_{\Gamma}(K)$ is contained in $\mathcal{KN}$.

Thus, by Corollary 4.10 in \cite{FlLa3} we conclude the result. 
\end{proof}
\begin{sloppypar} 
\begin{rem}
\label{goodwillie} Pettet and Souto in \cite{PeSo} show there is
a weak retract from $\hom(\mathbb{Z}^{r},G(K))$ to $\hom(\mathbb{Z}^{r},K)$,
but they do not show it preserves torsion or that it is $K$-equivariant.
This shows that their result alone does not imply ours. Given the
above theorem, it is natural to ask if one can make their weak retraction
$K$-equivariant and obtain a special case of our main theorem, but
as the following example shows, the existence of a SDR from a $G$-space
$X$ to a $G$-subspace $Y$, and even a SDR on quotients $X/G$ to
$Y/G$, together are not enough to imply the existence of a $G$-equivariant
SDR.

Here is the example: Take a CW space $Z$ that is acyclic but not
contractible. Let $X$ be the suspension of $Z$, and let $G$ be
the order 2 group that acts on the suspension by switching the two
cones. Notice that $Z$ is the fixed point set. Then $X$ is contractible
since the suspension of an acyclic space is contractible (by Whitehead's
Theorem), and the orbit space is contractible since it is a cone.
But $X$ is not $G$-equivariantly contractible because the fixed
point set is not contractible. 
\end{rem}
\end{sloppypar}

\section{Applications to Irreducibility and Topology of $\X_{\Gamma}(G)$.}

We now investigate other interesting consequences of Theorem \ref{thm:Main},
related to the topology and irreducibility of the character varieties
$\X_{\Gamma}(G)$.

We first consider the free Abelian group of rank $r$, $\Gamma=\Z^{r}$.
For these groups, and for a connected and semisimple compact Lie group
$K$, the cases when $\hom(\Z^{r},K)$ is connected have been determined
(see \cite{KaSm}).

This leads us to consider also connected \textit{semisimple} algebraic
groups $G$; by definition, these do not contain any non-trivial closed
connected Abelian normal subgroups, and are clearly reductive. As
usual, we assume that the trivial group is not semisimple. Consequently,
every connected semisimple group has rank at least 1. 
\begin{thm}
\label{connected:cor} Let $G$ be a semisimple connected algebraic
group over $\C$. Then $\X_{\Z^{r}}(G)$ is connected if and only
if $r=1$, $r=2$ and $G$ is simply connected, or $r\geq3$ and $G$
is a product of $\SL(n,\C)$'s and $\mathsf{Sp}(n,\C)$'s. \end{thm}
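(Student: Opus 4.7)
The plan is to combine Theorem~\ref{thm:Main} with Proposition~\ref{pathcomponents} to transfer the connectedness question from $\X_{\Z^r}(G)$ to the space $\hom(\Z^r,K)$ of commuting $r$-tuples in a maximal compact subgroup $K\subset G$, and then to invoke the Kac--Smilga classification \cite{KaSm} together with the classical rank $\le 2$ results.

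First I would carry out the reduction. Since $G$ is connected and semisimple, Theorem~\ref{thm:Main} gives a homotopy equivalence $\X_{\Z^r}(G) \simeq \X_{\Z^r}(K)$, and Proposition~\ref{pathcomponents} yields
\[
|\X_{\Z^r}(G)| \;=\; |\X_{\Z^r}(K)| \;=\; |\hom(\Z^r,K)|.
\]
So it suffices to decide exactly when $\hom(\Z^r,K)$ is connected, for $K$ a compact connected semisimple Lie group. Two useful observations: $G$ is simply connected iff $K$ is (since $G$ deformation retracts onto $K$ by polar decomposition), and Proposition~\ref{gproduct}, applied at the level of compact groups, reduces the claim for a product of simple factors to the simple case factor-by-factor.

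Next I would dispose of the low-rank cases. For $r=1$ the space $\hom(\Z,K)=K$ is connected. For $r=2$, Borel's theorem says every commuting pair in a compact connected simply connected Lie group lies in some maximal torus $T$; since the surjection $K\times(T\times T)\to \hom(\Z^2,K)$ given by conjugation has connected domain, $\hom(\Z^2,K)$ is connected whenever $K$ is simply connected. Conversely, if $\pi_1(K)\ne 1$ then lifting a commuting pair to the universal cover $\tilde K$ produces an obstruction class in $H^2(B\Z^2,\pi_1(K))\cong\pi_1(K)$ (compare \cite{BFM}), and different values of this class lie in different connected components of $\hom(\Z^2,K)$. This gives the $r=2$ equivalence.

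Finally, for $r\geq 3$, I would invoke Kac--Smilga \cite{KaSm}: for $K$ compact connected simple and simply connected, $\hom(\Z^r,K)$ is connected if and only if $K$ is of type $A$ or $C$, i.e.\ $K\cong \SU(n)$ or the compact symplectic group $\mathsf{Sp}(n)$. Combined with Proposition~\ref{gproduct}, this implies that for a general compact connected semisimple $K$, the space $\hom(\Z^r,K)$ is connected iff $K$ is a product of $\SU(n)$'s and $\mathsf{Sp}(n)$'s, which under complexification corresponds exactly to $G$ being a product of $\SL(n,\C)$'s and $\mathsf{Sp}(n,\C)$'s as stated. The main obstacle is the invocation of Kac--Smilga in rank $\ge 3$, whose ``only if'' direction requires a delicate case analysis of commuting triples in the exceptional and type $B$, $D$, $E$, $F$, $G$ simple compact Lie groups; once that deep input is taken for granted, the reduction via Theorem~\ref{thm:Main} is entirely formal.
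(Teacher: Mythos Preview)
Your approach is essentially the paper's: reduce via Theorem~\ref{thm:Main} and Proposition~\ref{pathcomponents} to the question of when $\hom(\Z^r,K)$ is connected, then invoke \cite{KaSm}. The paper's own proof is more terse---it attributes the entire compact classification (all $r$, all semisimple $K$) to \cite{KaSm} in a single citation, whereas you unpack the cases $r=1,2$ with independent arguments (Borel's theorem, the commutator obstruction in $\pi_1(K)$) and cite Kac--Smilga only for simply connected simple $K$ with $r\ge 3$.

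That extra care, however, leaves a small gap in your $r\ge 3$ paragraph. You invoke Kac--Smilga only for $K$ simply connected and simple, then write that Proposition~\ref{gproduct} extends this to ``a general compact connected semisimple $K$''. But Proposition~\ref{gproduct} handles only honest direct products, and a general semisimple $K$ is a quotient $(K_1\times\cdots\times K_m)/Z$ of a product of simply connected simple groups by a finite central subgroup. When $K$ is simply connected there is no issue, since then $Z$ is trivial and $K$ really is the product; combined with Kac--Smilga this settles both directions. When $K$ is \emph{not} simply connected, though, Proposition~\ref{gproduct} says nothing, and you have not argued that $\hom(\Z^r,K)$ is disconnected. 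The fix is immediate with ingredients you already have: your $r=2$ obstruction argument shows $\hom(\Z^2,K)$ is disconnected whenever $\pi_1(K)\neq 1$, and the continuous surjection $\hom(\Z^r,K)\to\hom(\Z^2,K)$ given by restricting to the first two generators (with section $(a,b)\mapsto(a,b,e,\dots,e)$) then forces $\hom(\Z^r,K)$ to be disconnected as well. With that one sentence added, your argument is complete and in fact more self-contained than the paper's.
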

\begin{proof}
Let $K$ be a maximal compact subgroup of $G$. Then $K$ is connected
and semisimple. According to \cite{KaSm}, $\hom(\Z^{r},K)$ is connected
if and only if $r=1$, $r=2$ and $G$ is simply connected, or $r\geq3$
and $G$ is a product of $\SU(n)$'s and $\mathsf{Sp}(n)$'s. By Proposition
\ref{pathcomponents}, $\hom(\Z^{r},K)$ is connected if and only
if $\X_{\Z^{r}}(K)$ is connected. However, by Corollary \ref{cor:bijection-comp},
$\X_{\Z^{r}}(K)$ is connected if and only if $\X_{\Z^{r}}(G)$ is
connected, and the result follows. 
\end{proof}

\subsection{The identity component}

There is an extremely useful characterization of $\X_{\Z^{r}}(G)$,
recently obtained by A. Sikora \cite{Si5}, analogous to work on the
compact case by T. Baird \cite{Ba1} (see also \cite{AG}).

To describe it, denote by $T$ the maximal torus of $G$, by $N_{G}(T)$
its normalizer in $G$, and by $W=N_{G}(T)/T$ its Weyl group. For
any $r\in\mathbb{N}$, the Weyl group $W$ acts on $T^{r}$ by simultaneous
conjugation. Consider the natural morphisms of affine varieties \begin{eqnarray}
\hom(\mathbb{Z}^{r},T) & \hookrightarrow & \hom(\mathbb{Z}^{r},G)\nonumber \\
\parallel &  & \downarrow\pi\label{eq:normalization}\\
T^{r} & \stackrel{\varphi}{\to} & \X_{\mathbb{Z}^{r}}(G).\nonumber \end{eqnarray}
 Here, the top row is the natural inclusion, $\pi$ the canonical
projection, and $\varphi$ makes the diagram commute. The morphism
$\varphi$ factors through the action of the Weyl group, and Sikora
(see \cite{Si5}) showed the following theorem. Denote by $\X_{\mathbb{Z}^{r}}^{0}(G)\subset\X_{\mathbb{Z}^{r}}(G)$
the image of $\varphi$. 
\begin{thm}
$[$See \cite{Si5}$]$ \label{pro:Sikora} Let $G$ be a complex
reductive algebraic group. Then $\X_{\mathbb{Z}^{r}}^{0}(G)$ is an
irreducible component of $\X_{\mathbb{Z}^{r}}(G)$ and there is a
bijective birational normalization morphism \[
\chi:T^{r}\quot W\to\X_{\mathbb{Z}^{r}}^{0}(G).\]
 Moreover, $\X_{\mathbb{Z}^{r}}(\SL(n,\C))$ and $\X_{\mathbb{Z}^{r}}(\mathsf{Sp}(n,\C))$
are irreducible normal varieties for any $r,n\geq1$. \end{thm}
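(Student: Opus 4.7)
The proof proceeds in three steps: constructing the morphism $\chi$, showing it is a bijective birational normalization onto an irreducible component, and finally specializing to $\SL(n,\C)$ and $\mathsf{Sp}(n,\C)$.

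First I would observe that the Weyl group $W = N_G(T)/T$ acts on $T^r$ by simultaneous conjugation through representatives in $N_G(T) \subset G$. Since character varieties identify $G$-conjugate tuples, the morphism $\varphi: T^r \to \X_{\mathbb{Z}^r}(G)$ in (\ref{eq:normalization}) is $W$-invariant and, because $W$ is finite so that $T^r \quot W = T^r/W$, factors through an algebraic morphism $\chi: T^r \quot W \to \X_{\mathbb{Z}^r}(G)$ whose image is by definition $\X_{\mathbb{Z}^r}^0(G)$. For injectivity of $\chi$, suppose that $\mathbf{t} = (t_1,\ldots,t_r)$ and $\mathbf{t}' = (t_1',\ldots,t_r')$ map to the same equivalence class. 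By Proposition \ref{semisimplelemma} and Theorem \ref{pro:polystable}, these tuples are $G$-conjugate as closed orbits, so some $g \in G$ satisfies $g t_i g^{-1} = t_i'$ for every $i$. The classical fact that a $G$-conjugacy between two semisimple tuples in a common maximal torus of a connected reductive group can be realized by an element of $N_G(T)$ (by induction on the centralizer of $t_1$, using that the centralizer of any semisimple element is reductive and contains a maximal torus) then implies $\mathbf{t}' \in W \cdot \mathbf{t}$, so $\chi$ is injective.

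Second, since $T^r$ is smooth and $W$ is finite acting algebraically, $T^r \quot W$ is an irreducible normal affine variety of dimension $r \cdot \mathrm{rank}(G)$. The image $\X_{\mathbb{Z}^r}^0(G) = \chi(T^r \quot W)$ is irreducible; to upgrade constructibility to closedness I would argue that $\chi$ is finite onto its image, hence proper, so the image is closed in $\X_{\mathbb{Z}^r}(G)$. Combined with maximality of dimension---any component of $\X_{\mathbb{Z}^r}(G)$ has a point which, by Proposition \ref{semisimplelemma}, is $G$-conjugate to a point of $T^r$, bounding its dimension by $r \cdot \mathrm{rank}(G)$---we conclude $\X_{\mathbb{Z}^r}^0(G)$ is an irreducible component. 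Birationality of $\chi$ is witnessed by the Zariski-open locus of regular tuples in $T^r$ (on which $W$ acts freely, and over which $\chi$ is one-to-one), and normality of $T^r \quot W$ together with the bijection from $\chi$ then gives the claimed normalization morphism.

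Third, for $G = \SL(n,\C)$ or $\mathsf{Sp}(n,\C)$ one must verify $\X_{\mathbb{Z}^r}^0(G) = \X_{\mathbb{Z}^r}(G)$, which combined with normality of $T^r \quot W$ yields normality and irreducibility of the full character variety. By Proposition \ref{semisimplelemma}, every class in $\X_{\mathbb{Z}^r}(G)$ is represented by a tuple of pairwise commuting semisimple elements; for $\SL(n,\C)$ the classical simultaneous diagonalization lemma for commuting diagonalizable matrices places them in a common torus, and for $\mathsf{Sp}(n,\C)$ a symplectic-adapted version (decompose $\C^{2n}$ into common eigenspaces and observe that the symplectic form pairs eigenspaces with reciprocal characters, producing a common Darboux eigenbasis) puts the tuple into a maximal symplectic torus. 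The main obstacle in the whole argument is the injectivity step: showing that $G$-conjugacy of tuples in $T^r$ forces $W$-conjugacy requires the aforementioned Weyl-type statement for commuting tuples rather than single elements, and is also the step most sensitive to the hypothesis that $G$ is connected; the simultaneous-diagonalization claim for $\mathsf{Sp}(n,\C)$ is the second nontrivial ingredient, since it does not follow formally from the $\SL$-case but requires tracking the symplectic form through the decomposition.
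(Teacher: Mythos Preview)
The paper does not prove this theorem: it is quoted from Sikora \cite{Si5} (the label ``[See \cite{Si5}]'' signals this) and is used as an external input. So there is no proof in the paper to compare your proposal against.

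That said, two steps in your sketch have genuine gaps. First, your dimension-bound argument claims that every component of $\X_{\mathbb{Z}^r}(G)$ contains a point $G$-conjugate to a point of $T^r$; Proposition~\ref{semisimplelemma} only gives that a polystable representative consists of commuting semisimple elements, and the paper itself (Remark~\ref{goldman}) exhibits commuting semisimple elements of $\mathsf{SO}(3)$ that cannot be simultaneously conjugated into a common maximal torus. So your route to ``$\X_{\mathbb{Z}^r}^0(G)$ is an irreducible component'' via a uniform dimension bound does not go through for general $G$; one needs a different reason (e.g.\ closedness plus the fact that $\X_{\mathbb{Z}^r}^0(G)$ contains a Zariski-open set of the ambient variety, as Sikora argues). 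Second, for $\SL(n,\C)$ and $\mathsf{Sp}(n,\C)$ you infer normality of $\X_{\mathbb{Z}^r}(G)$ from normality of $T^r\quot W$ together with the bijection $\chi$; but a bijective birational morphism from a normal variety onto $X$ is exactly the normalization of $X$, and this says nothing about whether $X$ itself is normal (think of the normalization of a cuspidal cubic). One must instead show directly that $\chi$ is an isomorphism in these cases, which is where Sikora's argument does real work.
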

\begin{rem}
\label{compactcase} Similar reasoning shows that $\hom^{0}(\mathbb{Z}^{r},K)/K$,
the identity component of the compact character variety, is in bijection
with $(T\cap K)^{r}/W$. This statement appears already in \cite[Remarks 3 \& 4]{Ba1}.
Moreover, this implies $\hom^{0}(\mathbb{Z}^{r},K)/K$ is homeomorphic
to $(T\cap K)^{r}/W$ since they are both compact and Hausdorff. 
\end{rem}
The character variety $\X_{\Gamma}(G)$ has a natural base point,
which is the trivial representation defined by $\rho_{e}(\gamma):=e$,
the identity in $G$, for all $\gamma\in\Gamma$ (more precisely,
the base point is the class $[\![\rho_{e}]\!]$ of $\rho_{e}$). Let
us denote the component of $\X_{\Gamma}(G)$ containing this base
point by $\X_{\Gamma}^{e}(G)$, and the corresponding component of
$\hom(\Gamma,G)$ by $\hom^{e}(\Gamma,G)$. Both of these components
are referred to as the \emph{identity component}. 
\begin{defn}
Denote by $W_{r}(G)\subset\hom(\mathbb{Z}^{r},G)\subset G^{r}$ the
subset of commuting $r$-tuples $\rho=(g_{1},...,g_{r})\in G^{r}$
such that there is a maximal torus $T\subset G$ with $g_{1},...,g_{r}\in T$. \end{defn}
\begin{rem}
\label{goldman} One may ask whether one can simultaneously conjugate
commuting elements in a reductive group to a single maximal torus.
This works for $\SL(n,\mathbb{C})$, but fails in general. For instance
consider $\mathrm{diag}(-1,-1,1)$ and $\mathrm{diag}(1,-1,-1)$ in
$\mathsf{SO}(3)$. They commute, and the first is in $\mathsf{SO}(2)\times\{1\}$
and the second is in $\{1\}\times\mathsf{SO}(2)$, but the two elements
cannot be simultaneously conjugated by $\mathsf{SO}(3)$ into the
same maximal torus of $\mathsf{SO}(3)$. 
\end{rem}
Recall the definition of $\X_{\mathbb{Z}^{r}}^{0}(G)$ before Theorem
\ref{pro:Sikora}. From Equation (\ref{eq:normalization}), we deduce
that $W_{r}(G)=\pi^{-1}(\X_{\mathbb{Z}^{r}}^{0}(G))$ where $\pi:\hom(\mathbb{Z}^{r},G)\to\X_{\Gamma}(G)$
is the quotient morphism. Therefore, $W_{r}(G)\quot G=\X_{\mathbb{Z}^{r}}^{0}(G)$.

Since $\X_{\Z^{r}}^{0}(G)$ is irreducible and contains the identity
it is connected, and thus by definition $\X_{\Z^{r}}^{0}(G)\subset\X_{\Z^{r}}^{e}(G).$ 
\begin{lem}
\label{lem:B}$W_{r}(G)\subset\hom^{e}(\mathbb{Z}^{r},G)$. \end{lem}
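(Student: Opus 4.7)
The plan is to produce, for each $\rho \in W_r(G)$, an explicit continuous path in $\hom(\mathbb{Z}^r,G)$ from $\rho$ to the trivial representation $\rho_e$, which is enough by definition of the identity component $\hom^e(\mathbb{Z}^r,G)$.

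First, I would unpack the definition: if $\rho = (g_1,\dots,g_r)\in W_r(G)$, then there is some maximal torus $T\subset G$ containing $g_1,\dots,g_r$ simultaneously. Any maximal torus of a complex reductive algebraic group is isomorphic (as a complex Lie group) to $(\mathbb{C}^*)^{\dim T}$, hence is path-connected. Thus for each $i$ I may choose a continuous path $\gamma_i:[0,1]\to T$ with $\gamma_i(0)=g_i$ and $\gamma_i(1)=e$.

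Next, I would build a path in $\hom(\mathbb{Z}^r,G)$ from these. Let $\{e_1,\dots,e_r\}$ be the standard generators of $\mathbb{Z}^r$ and, for $t\in[0,1]$, define $\rho_t$ by $\rho_t(e_i):=\gamma_i(t)$. The key point is that this really defines an element of $\hom(\mathbb{Z}^r,G)$: since all $\gamma_i(t)$ lie in the abelian group $T$, any two of them commute, so $\rho_t$ extends uniquely to a homomorphism $\mathbb{Z}^r\to T\subset G$. Continuity of $t\mapsto \rho_t$ as a map into $\hom(\mathbb{Z}^r,G)\subset G^r$ is clear from the continuity of each $\gamma_i$. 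At $t=0$ we recover $\rho$ and at $t=1$ we obtain the trivial representation $\rho_e$.

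Hence $\rho$ lies in the same path component as $\rho_e$, i.e.\ $\rho \in \hom^e(\mathbb{Z}^r,G)$. There is no real obstacle here beyond the observation that commutativity is automatic along the path because the whole homotopy stays inside the abelian subgroup $T$; this is precisely the feature of $W_r(G)$ (as opposed to a general tuple of pairwise commuting semisimple elements, cf.\ Remark \ref{goldman}) that makes the argument work.
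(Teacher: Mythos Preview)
Your proof is correct and follows essentially the same approach as the paper: pick a maximal torus $T$ containing all $g_i$, use connectedness of $T$ to choose paths $\gamma_i$ from $g_i$ to $e$ inside $T$, and observe that the resulting componentwise path stays in $T^r\subset\hom(\mathbb{Z}^r,G)$ because $T$ is abelian. The only difference is that you spell out the homomorphism extension and continuity in slightly more detail than the paper does.
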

\begin{proof}
If $\rho\in W_{r}(G)$, let $T$ be such that $\rho=(g_{1},...,g_{r})\in T^{r}$.
Since $T$ is connected, just let $\gamma_{1},...,\gamma_{r}$ be
paths inside $T$ joining $g_{1}$ to $e\in G,...,g_{r}$ to $e$.
These paths will form, component-wise, a path of commuting $r$-tuples
inside $T^{r}$ joining $\rho$ to $(e,...,e)$. So, $\rho$ is in
$\hom^{e}(\mathbb{Z}^{r},G)$. 
\end{proof}
Notice that $\X_{\Z^{r}}^{e}(G)=\X_{\Z^{r}}^{0}(G)$ if we also had
$\hom^{e}(\mathbb{Z}^{r},G)\subset W_{r}(G)$, by Lemma \ref{lem:B}.
This leads naturally to the following problem. 
\begin{problem}
Determine which $r$ and $G$ give the equality $\X_{\Z^{r}}^{e}(G)=\X_{\Z^{r}}^{0}(G)$. 
\end{problem}
In \cite{Si5} it is shown that this equality holds when $r=1,2$
and $G$ is connected reductive (see also \cite{Ri79}), or for any
$r$ when $G$ equals $\SL(n,\C)$, $\GL(n,\C)$ or $\mathsf{Sp}(n,\C)$.
More generally, we prove below that this equality holds, for any $r$,
when the derived group of $G$ is a product of $\SL(n,\C)$'s and
$\mathsf{Sp}(n,\C)$'s (see Corollary \ref{cor:reductive}).

\subsection{Irreducibility of $\X_{\Gamma}(G)$ for semisimple connected $G$}

Theorem \ref{connected:cor} lists the cases where $\X_{\Gamma}(G)$
is connected with $G$ semisimple connected and $\Gamma$ free Abelian.
To generalize the result to the case of a general Abelian $\Gamma$,
we need the following lemma. 
\begin{lem}
\label{lem:not-free}If $\Gamma$ is a finitely generated Abelian
group which is not free $($i.e., it has some torsion$)$, then $\X_{\Gamma}(G)$
is not path-connected. 
\end{lem}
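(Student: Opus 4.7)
The plan is to reduce to the compact setting via Theorem \ref{thm:Main} (equivalently, Corollary \ref{cor:bijection-comp}): since the deformation retraction gives a bijection between the path components of $\X_\Gamma(G)$ and those of $\X_\Gamma(K)$, where $K$ is a maximal compact subgroup of $G$, it suffices to prove $\X_\Gamma(K)$ is not path-connected. Here $K$ is connected, compact, semisimple, of positive rank.

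By the classification of finitely generated Abelian groups, since $\Gamma$ has torsion I can write $\Gamma \cong \mathbb{Z}^s \oplus \mathbb{Z}_n \oplus \Gamma''$ for some $n \geq 2$ (the order of one cyclic torsion summand). The inclusion $\iota : \mathbb{Z}_n \hookrightarrow \Gamma$ of this summand is split, so precomposition by $\iota$ induces a continuous, $K$-equivariant map $\hom(\Gamma,K) \to \hom(\mathbb{Z}_n,K)$ that is surjective: any homomorphism $\mathbb{Z}_n \to K$ extends to $\Gamma$ by sending $\mathbb{Z}^s$ and $\Gamma''$ to the identity. Passing to $K$-orbit spaces yields a continuous surjection
\[
R : \X_\Gamma(K) \twoheadrightarrow \X_{\mathbb{Z}_n}(K).
\]

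The main step is then to show that $\X_{\mathbb{Z}_n}(K)$ is a finite discrete space with at least two points. Fix a maximal torus $T \subset K$ with Weyl group $W = N_K(T)/T$. Since a homomorphism $\mathbb{Z}_n \to K$ is determined by the image $h$ of a generator (with $h^n = 1$), and every element of a connected compact Lie group is conjugate into $T$, with two elements of $T$ being $K$-conjugate iff they are $W$-conjugate, I get
\[
\X_{\mathbb{Z}_n}(K) \;\cong\; T[n]/W, \qquad T[n] := \{t \in T : t^n = 1\}.
\]
Because $K$ is semisimple, $d := \dim T \geq 1$, so $|T[n]| = n^d \geq 2$. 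The identity is $W$-fixed and forms a singleton orbit, while any nontrivial element of $T[n]$ gives a distinct $W$-orbit; hence $|T[n]/W| \geq 2$. Being finite and Hausdorff, $T[n]/W$ is discrete, so it has at least two path components.

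Composing with the continuous surjection $R$ yields at least two path components in $\X_\Gamma(K)$, and therefore in $\X_\Gamma(G)$. The only nontrivial ingredient is the identification $\X_{\mathbb{Z}_n}(K) \cong T[n]/W$, which rests on the classical maximal torus theorem for connected compact Lie groups; the rest is formal given Theorem \ref{thm:Main} and the structure theorem for finitely generated Abelian groups.
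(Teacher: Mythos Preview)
Your proof is correct, but it follows a genuinely different path from the paper's argument.

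The paper reduces, via Proposition~\ref{pathcomponents}, to showing $\hom(\Gamma,G)$ itself is disconnected on the complex side: it fixes a faithful linear embedding $G\hookrightarrow\GL(n,\C)$ with $T\subset\Delta_n$, picks a nontrivial diagonal coordinate character $\varphi_1:T\to\C^*$, and constructs a representation $\rho_1$ sending the torsion generator to some $h_1\in T$ with $\varphi_1(h_1)=e^{2\pi i/d}$. The obstruction to a path from the trivial representation to $\rho_1$ is that the composite $t\mapsto\varphi_1(\rho_t(\delta))$ would be a continuous map from $[0,1]$ into the $d$-th roots of unity with distinct endpoints.

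You instead pass immediately to the compact quotient via Theorem~\ref{thm:Main}, and then use the split summand $\Z_n\hookrightarrow\Gamma$ to produce a continuous $K$-equivariant surjection $\X_\Gamma(K)\twoheadrightarrow\X_{\Z_n}(K)$. The target is identified, via the maximal torus theorem for connected compact groups, with the finite set $T[n]/W$, which has at least two points since $\mathrm{rk}\,K\ge 1$. This is a clean structural argument: the discrete invariant you build (the $W$-orbit of the torsion image in $T[n]$) replaces the paper's character-valued invariant, and the reduction to a single cyclic factor avoids any explicit coordinate computations. The trade-off is that your argument leans on the main theorem and on compact Lie theory, whereas the paper's hands-on construction stays on the complex algebraic side and is closer to self-contained once one grants Proposition~\ref{pathcomponents}.
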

\begin{sloppypar} 
\begin{proof}
Since $G$ is connected the conclusion follows from Proposition \ref{pathcomponents}
if $\hom(\Gamma,G)$ is disconnected. Say $\Gamma$ has non-trivial
torsion. Then, it can be written in the form $\Gamma=\mathbb{Z}_{d}\oplus\Gamma'$,
for some $d\in\mathbb{N}\setminus\{0,1\}$, and denote by $\delta\in\Gamma$
a generator of the $\mathbb{Z}_{d}$-summand, so that $\delta^{d}=1$.
Denote by $\rho_{0}\in\hom(\Gamma,G)$ the trivial representation
$\rho_{0}(\gamma)=e\in G$ for all $\gamma\in\Gamma$. In order to
define a general representation $\rho$ of $\Gamma$ into $G$, we
just need to assign the value at $\delta$, say $\rho(\delta)=g\in G$,
satisfying $g^{d}=e$, together with $\rho'\in\hom(\Gamma',G)$, as
long as $g$ commutes with every $\rho(\gamma')$ for $\gamma'\in\Gamma'$.

Assume now that $G$ is faithfully embedded as an algebraic representation
in $\GL(n,\mathbb{C})$, in such a way that a maximal torus $T\subset G$
is a subtorus of $\Delta_{n}$, the torus of diagonal elements in
$\GL(n,\mathbb{C})$.

We can always arrange for this since a maximal torus is a connected
maximal Abelian subgroup and thus contained in a connected maximal
Abelian subgroup of $\GL(n,\mathbb{C})$, a maximal torus, and all
maximal tori in $\GL(n,\mathbb{C})$ are conjugate. So let \begin{eqnarray*}
\varphi:T & \to & \Delta_{n}\\
h & \mapsto & \left(\varphi_{1}(h),\cdots,\varphi_{n}(h)\right)\end{eqnarray*}
 be the corresponding group homomorphism. Since $\varphi$ is an embedding,
at least one of the components is nontrivial; without loss of generality
let it be $\varphi_{1}:T\to\mathbb{C}^{*}$. Thus $\varphi_{1}$ is
an algebraic character of $T$ and thus has the form $t_{1}^{m_{1}}\cdots t_{n}^{m_{n}}$.
This means, since the character is non-trivial, that $\varphi_{1}$
is surjective, and so there is $h_{1}\in T$ such that $\varphi_{1}(h_{1})=e^{\frac{2\pi i}{d}}$.
Now, the assignment $\rho_{1}(\delta)=h_{1}$ and $\rho_{1}(\gamma)=e$
for all $\gamma\in\Gamma'$ defines a representation $\rho_{1}\in\hom(\Gamma,G)$,
since $\rho_{1}(\delta^{d})=h_{1}^{d}=1\in\mathbb{C}^{*}$ (and $e$
commutes with everything).

Suppose that there was a continuous path $\rho_{t}$, $t\in[0,1]$
from $\rho_{0}$ to $\rho_{1}$. Then, as a function of $t$, $(\varphi_{1}\circ\rho_{t})(\delta):[0,1]\to\mathbb{C}^{*}$
is continuous, and has values on $\mathbb{Z}_{d}\subset\mathbb{C}^{*}$the
set of $d$-th roots of unity, since $(\varphi_{1}\circ\rho_{t})(\delta^{d})=\left((\varphi_{1}\circ\rho_{t})(\delta)\right)^{d}=1$.
Since a continuous map sends the connected interval $[0,1]$ to a
connected set, but the image of $(\varphi_{1}\circ\rho_{t})(\delta)$
is disconnected (as $(\varphi_{1}\circ\rho_{0})(\delta)=1$ and $(\varphi_{1}\circ\rho_{1})(\delta)=e^{\frac{2\pi i}{d}}\neq1$),
this contradiction shows that $\hom(\Gamma,G)$ is not connected. 
\end{proof}
\end{sloppypar}

In the special case of Abelian reductive groups, we are able to compute
exactly the number of path components. For related results on counting
components, see Corollary 3.4 in \cite{AG}. 
\begin{lem}
\label{lem:T-char}Let $T$ be a complex reductive Abelian connected
group of dimension $m$ $($or its maximal compact subgroup$)$, and
let $\Gamma=\Z^{s}\oplus\bigoplus_{j=1}^{t}\Z_{n_{j}}$. Then $|\hom(\Gamma,T)|=\prod_{j=1}^{t}n_{j}^{m}$.\end{lem}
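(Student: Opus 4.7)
The plan is to reduce the question to a direct product decomposition of the representation space, and then to compute the torsion subgroup of a torus directly.

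First I would exploit that $T$ is abelian, so that $\hom(-,T)$ turns direct sums into direct products of abelian groups. Since $\Gamma = \Z^{s}\oplus\bigoplus_{j=1}^{t}\Z_{n_{j}}$, this yields a natural homeomorphism (in fact a group isomorphism)
\[
\hom(\Gamma,T)\cong\hom(\Z^{s},T)\times\prod_{j=1}^{t}\hom(\Z_{n_{j}},T)\cong T^{s}\times\prod_{j=1}^{t}T[n_{j}],
\]
where $T[n]:=\{t\in T:t^{n}=e\}$ is the $n$-torsion subgroup. Note that no commutativity obstruction arises here because $T$ is abelian, so every set-theoretic assignment on generators respecting the relations defines a homomorphism.

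Second, I would count path components factor by factor. The factor $T^{s}$ is connected (since $T$ is), and hence contributes a single component. Each finite factor $T[n_{j}]$ is discrete, so contributes $|T[n_{j}]|$ many components. Therefore
\[
|\hom(\Gamma,T)|=\prod_{j=1}^{t}|T[n_{j}]|,
\]
and it remains only to identify $|T[n]|$ for each $n$.

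Third, I would use the structure theorem for complex reductive abelian connected groups: such $T$ is isomorphic to $(\C^{*})^{m}$, and its maximal compact subgroup is $(S^{1})^{m}$. In both cases the $n$-torsion subgroup is isomorphic to $\mu_{n}^{m}$, where $\mu_{n}$ is the group of $n$-th roots of unity in $\C^{*}$ (or $S^{1}$), of order $n$. Hence $|T[n]|=n^{m}$, and combining with the previous step gives $|\hom(\Gamma,T)|=\prod_{j=1}^{t}n_{j}^{m}$, as desired. I do not anticipate a serious obstacle; the only slightly delicate point is checking that for the complex group $T\cong(\C^{*})^{m}$ the equation $t^{n}=e$ still has exactly $n^{m}$ solutions (which it does because $z\mapsto z^{n}$ on $\C^{*}$ is surjective with kernel $\mu_{n}$), so that the compact and complex cases give the same count.
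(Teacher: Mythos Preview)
Your proposal is correct and follows essentially the same approach as the paper: both decompose $\hom(\Gamma,T)$ using the direct sum structure of $\Gamma$ together with the abelianness of $T$, identify $T$ with $(\C^{*})^{m}$ (or $(S^{1})^{m}$), observe that the free part contributes the connected factor $T^{s}$, and count the $n_{j}$-torsion in $\C^{*}$ to obtain $n_{j}^{m}$ for each finite cyclic summand. The only cosmetic difference is that the paper splits $T$ into its $\C^{*}$ factors before counting, whereas you first write $\hom(\Z_{n_{j}},T)=T[n_{j}]$ and then invoke the structure of $T$; the content is the same.
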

\begin{proof}
Given two finitely generated Abelian groups $A,B$, and an Abelian
group $T$, we have $\hom(A\oplus B,T)\cong\hom(A,T)\times\hom(B,T)$.
Given two Lie groups $T_{1},T_{2}$ we also have $\hom(\Gamma,T_{1}\times T_{2})\cong\hom(\Gamma,T_{1})\times\hom(\Gamma,T_{2})$.
If $T$ is a complex Abelian reductive connected group of dimension
$m$, then $T$ is isomorphic to a complex torus, so $T\cong(\mathbb{C}^{*})^{m}$
with maximal compact subgroup $(S^{1})^{m}$. Either way, $\hom(\mathbb{Z},T)\cong T$.
We obtain: \[
\hom(\Gamma,T)\cong T^{s}\times\hom(\oplus_{j=1}^{t}\Z_{n_{j}},\left(\mathbb{C}^{*}\right)^{m})\cong T^{s}\times_{j=1}^{t}\left(\hom(\mathbb{Z}_{n_{j}},\mathbb{C}^{*})\right)^{m}.\]
 Finally, because $T$ is connected and $|\hom(\mathbb{Z}_{n},\mathbb{C}^{*})|=n$
(for any $n\in\mathbb{N}$, the set of $n$-th roots of unity has
cardinality $n$), we obtain the desired formula. 
\end{proof}
Now we are ready to extend Theorem \ref{connected:cor} as follows. 
\begin{thm}
\label{thm:General-Gamma}Let $G$ be a semisimple connected algebraic
group over $\C$ and $\Gamma$ a finitely generated Abelian group
of rank $r$. Then $\X_{\Gamma}(G)$ is path connected if and only
if:

$(1)$ $\Gamma$ is free, and

$(2)$ $r=1$, $r=2$ and $G$ is simply connected, or $r\geq3$ and
$G$ is a product of $\SL(n,\C)$'s and $\mathsf{Sp}(n,\C)$'s. \end{thm}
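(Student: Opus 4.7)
The plan is to prove Theorem \ref{thm:General-Gamma} by assembling the preceding results: Theorem \ref{connected:cor} handles the free Abelian case, while Lemma \ref{lem:not-free} rules out torsion, and together they give the biconditional. Almost all of the actual work has already been carried out earlier in the paper, so the proof is essentially a two-step reduction.

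For the ``if'' direction, assume $\Gamma$ is free and the rank/group conditions in (2) hold. Then $\Gamma \cong \mathbb{Z}^r$, so $\X_{\Gamma}(G) = \X_{\mathbb{Z}^r}(G)$, and Theorem \ref{connected:cor} immediately gives path-connectedness. (Recall that for $\X_{\Gamma}(G)$, being a semi-algebraic set, connected and path-connected coincide, so no further regularity argument is needed.)

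For the ``only if'' direction, I would argue contrapositively in two stages. Suppose $\X_{\Gamma}(G)$ is path-connected. First, if $\Gamma$ had nontrivial torsion, then since $G$ is connected and semisimple, Lemma \ref{lem:not-free} would give that $\X_{\Gamma}(G)$ is disconnected, a contradiction. Hence condition (1) holds and $\Gamma \cong \mathbb{Z}^r$. Second, applying Theorem \ref{connected:cor} to the now-free $\Gamma$, path-connectedness of $\X_{\mathbb{Z}^r}(G)$ forces exactly the trichotomy in condition (2): either $r = 1$, or $r = 2$ with $G$ simply connected, or $r \geq 3$ with $G$ a product of $\SL(n,\mathbb{C})$'s and $\mathsf{Sp}(n,\mathbb{C})$'s.

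There is no real obstacle in this proof, since the deep inputs, namely the Kac--Smilga classification filtered through Proposition \ref{pathcomponents} and Corollary \ref{cor:bijection-comp} (giving Theorem \ref{connected:cor}), and the torsion obstruction of Lemma \ref{lem:not-free} via an algebraic character of a maximal torus, were established previously. The only point requiring a brief justification is that Lemma \ref{lem:not-free} applies under our hypotheses: it requires $G$ connected (which we assume) and $\Gamma$ non-free Abelian. Lemma \ref{lem:T-char}, while not strictly needed for path-connectedness alone, would in fact refine this argument by producing the explicit lower bound $\prod_{j=1}^t n_j^{\mathrm{rk}(T)} \geq 2$ on the number of components of $\hom(\Gamma, T) \subset \hom(\Gamma, G)$ whenever $\Gamma$ has torsion, giving a cleaner quantitative route to the same contradiction.
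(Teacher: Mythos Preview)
Your proposal is correct and follows essentially the same approach as the paper: reduce to the free case via Theorem \ref{connected:cor}, and dispose of the non-free case via Lemma \ref{lem:not-free}. The paper's own proof is even more terse (two sentences), but the logical structure and the ingredients invoked are identical; your extra remark about Lemma \ref{lem:T-char} is a valid aside but not needed.
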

\begin{proof}
Corollary \ref{connected:cor} shows the statement for $\Gamma=\mathbb{Z}^{r}$.
Now, suppose that $\Gamma$ is not free. Then $\X_{\Gamma}(G)$ cannot
be connected, by the preceding Lemma \ref{lem:not-free}, so the result
follows. 
\end{proof}
Finally we can show Theorem \ref{classification} which says when
$G$ is semisimple and connected and $\Gamma$ is finitely generated
and Abelian, then $\X_{\Gamma}(G)$ is an irreducible variety if and
only if $\Gamma$ is free and either $r=1$, $r=2$ and $G$ is simply
connected, or $r\geq3$ and $G$ is a product of $\SL(n,\C)$'s and
$\mathsf{Sp}(n,\C)$'s. 
\begin{proof}
{[}Proof of Theorem \ref{classification}{]} If a variety is irreducible,
it is path-connected \cite{Sh1,Sh2}. So, by Theorem \ref{thm:General-Gamma},
the given conditions are necessary.

Conversely, assume that $\Gamma$ is free, and either $r=1$, $r=2$
and $G$ is simply connected, or $r\geq3$ and $G$ is a product of
$\SL(n,\C)$'s and $\mathsf{Sp}(n,\C)$'s. In \cite{St}, the case
$r=1$ is shown to be irreducible; in \cite{Ri79} the $r=2$ and
$G$ simply-connected case is shown to be irreducible. Finally, in
the very recent pre-print \cite{Si5} the cases $r\geq3$ and $G=\SL(n,\C)$
or $G=\mathsf{Sp}(n,\C)$ are shown to be irreducible. However, by
Proposition \ref{gproduct}, if $G$ is a product of $\SL(n,\C)$'s
and $\mathsf{Sp}(n,\C)$'s then $\X_{\Z^{r}}(G)$ decomposes into
a product of $\X_{\Z^{r}}(\SL(n,\C))$'s and $\X_{\Z^{r}}(\mathsf{Sp}(n,\C))$'s.
Since the product of irreducible varieties is irreducible, we conclude
that the last case is irreducible too. \end{proof}
\begin{rem}
As is shown in \cite{CoMa}, a complex affine algebraic set $V$ is
an irreducible algebraic variety if and only if $V$ contains a connected
dense open subset which is smooth. Thus, in the context of this last
theorem, we conclude that the smooth points of $\X_{\Z^{r}}(G)$ are
path-connected without exhibiting a smooth path between smooth points. 
\end{rem}
We now can settle a question implicit in \cite{Si5}. 
\begin{cor}
Let $G$ be a complex exceptional Lie group. If $r\geq3$, then $\X_{\mathbb{Z}^{r}}(G)$
is not irreducible, nor path-connected. If $r=2$, then $\X_{\mathbb{Z}^{r}}(G)$
is irreducible $($respectively, path-connected$)$ if and only if
$G$ is $\mathsf{E}_{8},\mathsf{F}_{4}$, or $\mathsf{G}_{2}$.\end{cor}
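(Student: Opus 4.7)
The plan is to deduce this corollary directly from Theorem \ref{classification} and Theorem \ref{thm:General-Gamma}, together with the classification of complex simple exceptional Lie groups $\mathsf{G}_{2}, \mathsf{F}_{4}, \mathsf{E}_{6}, \mathsf{E}_{7}, \mathsf{E}_{8}$ and their centers. No new machinery is required: the proof is purely bookkeeping against these two classifications.

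For $r \geq 3$, I would apply Theorem \ref{classification}: the character variety $\X_{\Z^{r}}(G)$ is irreducible if and only if $G$ is a product of factors of the form $\SL(n,\C)$ and $\mathsf{Sp}(n,\C)$. No complex exceptional Lie group admits such a decomposition, since each is simple with root system of exceptional type (one of $G_{2}, F_{4}, E_{6}, E_{7}, E_{8}$), and none of these is of classical type $A_{n-1}$ or $C_{n}$. Hence irreducibility fails, and Theorem \ref{thm:General-Gamma} with the same structural condition then rules out path-connectedness as well.

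For $r=2$, both theorems reduce the question to whether $G$ is simply connected. Among the five complex exceptional Lie groups, $\mathsf{G}_{2}, \mathsf{F}_{4}$, and $\mathsf{E}_{8}$ have trivial center, so their adjoint and simply connected forms coincide and they are automatically simply connected; these three yield irreducible and path-connected $\X_{\Z^{2}}(G)$ by Theorems \ref{classification} and \ref{thm:General-Gamma}. The groups $\mathsf{E}_{6}$ and $\mathsf{E}_{7}$, in the (adjoint) form implicit in the statement, have nontrivial fundamental groups (dual to centers $\Z/3$ and $\Z/2$ of the simply connected forms) and are therefore excluded. The only step with genuine content is the case $r \geq 3$, which depends on the nontrivial Theorem \ref{classification}; everything else is a direct check against the root system classification.
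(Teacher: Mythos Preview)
Your proof is correct and follows essentially the same route as the paper's: both invoke Theorems \ref{classification} and \ref{thm:General-Gamma}, observe that exceptional simple groups are never products of $\SL(n,\C)$'s and $\mathsf{Sp}(n,\C)$'s, and handle the $r=2$ case by checking which of the five exceptional groups are simply connected. Your parenthetical remark that $\mathsf{E}_{6}$ and $\mathsf{E}_{7}$ must be read in their adjoint (non-simply-connected) form is a useful clarification that the paper leaves implicit.
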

\begin{proof}
The complex exceptional Lie groups are simple, and hence semisimple
and connected. Moreover, being simple and exceptional implies they
are not products of $\SL(n,\C)$'s and $\mathsf{Sp}(n,\C)$'s. It
is also known that $\mathsf{E}_{8},\mathsf{F}_{4}$, and $\mathsf{G}_{2}$
are simply-connected, while $\mathsf{E}_{6}$ and $\mathsf{E}_{7}$
are not. Therefore, Theorems \ref{thm:General-Gamma} and \ref{classification}
give the result. 
\end{proof}

\subsection{Irreducibility of $\X_{\mathbb{Z}^{r}}(G)$ for connected reductive
$G$}

Suppose now that $G$ is a \emph{connected} reductive group. One can
represent any element $g\in G$ as $g=th$ where $t$ belongs to $Z^{0}$,
the connected component of the center of $G$, and $h$ is in $DG$,
the derived group of $G$ (see \cite[page 200]{Mil}). By definition
$DG=[G,G]$ is the group of commutators of elements in $G$, and it
is well known that $DG$ is a connected semisimple algebraic group.

Moreover, the canonical morphism $\tilde{G}:=Z^{0}\times DG\to G,$
defined by the product is what is called a \emph{central isogeny}
(see \cite[Cor 5.3.3]{Con}). This means that the kernel of the morphism
above is a finite subgroup $F$ of the center of $G$. Recall also
that, because $G$ is reductive, the radical of $G$ coincides with
$Z^{0}$ and this is a torus, hence isomorphic to some $(\mathbb{C}^{*})^{d}$,
$d\geq0$. So, we have an exact sequence\[
\mathbf{1}\to F\to\tilde{G}=(\mathbb{C}^{*})^{d}\times DG\to G\to\mathbf{1}.\]

Suppose now $\Gamma=\langle\gamma_{1},...,\gamma_{r}\rangle\cong\mathbb{Z}^{r}$,
a free Abelian group, and let $\rho\in\hom(\Gamma,G)$. Using the
sequence above, for every $a_{i}=\rho(\gamma_{i})$ there exists $\tilde{a}_{i}=(t_{i},h_{i})\in\tilde{G}=Z^{0}\times DG$
such that $a_{i}=t_{i}h_{i}$. Since $\{t_{1},...,t_{r}\}$ is in
the center of $G$, and since $\{a_{1},...,a_{r}\}$ all commute with
each other, we conclude that for all $1\leq i,j\leq r$ $t_{i}t_{j}h_{i}h_{j}=a_{i}a_{j}=a_{j}a_{i}=t_{i}t_{j}h_{j}h_{i}$,
and thus $h_{i}h_{j}=h_{j}h_{i}$. Therefore, we obtain $\tilde{\rho}=(\tilde{a}_{1},...,\tilde{a}_{r})\in\hom(\Gamma,\tilde{G})$.
Now because $F$ is Abelian (a finite subgroup of $Z$), $\hom(\Gamma,F)=F^{r}$
is indeed an Abelian group itself.

Since $F$ is central, there is a natural action of $\hom(\Gamma,F)$
on $\hom(\Gamma,\tilde{G})$ and indeed, one can easily check that\begin{align}
\hom(\Gamma,G) & =\hom(\Gamma,\tilde{G})/\hom(\Gamma,F)\nonumber \\
 & \cong\left(\hom(\Gamma,(\mathbb{C}^{*})^{d})\times\hom(\Gamma,DG)\right)/\hom(\Gamma,F)\label{eq:reductive}\end{align}
 as affine algebraic varieties.

Since $\hom(\mathbb{Z}^{r},(\mathbb{C}^{*})^{d})=(\mathbb{C}^{*})^{dr}$
is irreducible, the Cartesian product of irreducible varieties is
irreducible, and the quotient of an irreducible variety by reductive
group (this includes the finite group case) is irreducible, formula
(\ref{eq:reductive}) immediately implies the following. 
\begin{lem}
\label{lem:reductive}Let $G$ be a connected reductive group. If
$\hom(\mathbb{Z}^{r},DG)$ is irreducible, then the same holds for
$\hom(\mathbb{Z}^{r},G)$ and $\X_{\mathbb{Z}^{r}}(G)$. 
\end{lem}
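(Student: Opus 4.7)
The plan is to exploit the isomorphism \eqref{eq:reductive} derived in the preceding paragraphs, which already expresses $\hom(\mathbb{Z}^r,G)$ as a quotient of a Cartesian product by an algebraic action of a finite Abelian group. I would then invoke two basic preservation principles: irreducibility is preserved under Cartesian products over an algebraically closed field, and it is preserved under surjective morphisms, in particular under GIT quotients by reductive groups.

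First I would observe that $\hom(\mathbb{Z}^r,(\mathbb{C}^*)^d)\cong(\mathbb{C}^*)^{dr}$ is irreducible as a Zariski-open subset of affine space. Combined with the hypothesis that $\hom(\mathbb{Z}^r,DG)$ is irreducible, and using the product-of-irreducibles fact over $\C$, the affine variety
\[
\hom(\mathbb{Z}^r,\tilde{G})\cong\hom(\mathbb{Z}^r,(\mathbb{C}^*)^d)\times\hom(\mathbb{Z}^r,DG)
\]
is irreducible. Next, since $F$ is central in $\tilde{G}$, the group $\hom(\mathbb{Z}^r,F)=F^r$ acts on $\hom(\mathbb{Z}^r,\tilde{G})$ by algebraic automorphisms (component-wise translation by a central element), and by \eqref{eq:reductive} the geometric quotient is $\hom(\mathbb{Z}^r,G)$. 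Being finite, $F^r$ is reductive, and the projection to its orbit space is surjective; the image of an irreducible variety under a surjective morphism is irreducible, so $\hom(\mathbb{Z}^r,G)$ is irreducible.

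Finally, $\X_{\mathbb{Z}^r}(G)=\hom(\mathbb{Z}^r,G)\quot G$ is the GIT quotient of this irreducible affine variety by the reductive group $G$. The canonical morphism $\pi_G\colon\hom(\mathbb{Z}^r,G)\to\X_{\mathbb{Z}^r}(G)$ is surjective, so the character variety inherits irreducibility from its domain.

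The only point that genuinely needs care is the transition from the set-theoretic identification \eqref{eq:reductive} to an isomorphism of affine varieties, and the claim that $F^r$ acts algebraically; the main obstacle is therefore bookkeeping rather than substantive geometry, since centrality of $F$ makes the action polynomial and the quotient by a finite group lands in the category of affine varieties. Once these are in hand, the argument is a three-step chain: product preserves irreducibility, finite quotient preserves irreducibility, and GIT quotient preserves irreducibility.
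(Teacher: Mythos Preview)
Your proof is correct and follows essentially the same route as the paper: use formula \eqref{eq:reductive} together with the facts that $(\mathbb{C}^*)^{dr}$ is irreducible, products of irreducibles are irreducible, and quotients by reductive (in particular finite) groups preserve irreducibility. The paper states this in one sentence immediately before the lemma; your write-up simply unpacks the same steps with a bit more care about the bookkeeping.
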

Now, we present a partial generalization our main theorem to the case
when $G$ is connected and reductive. 
\begin{cor}
\label{cor:reductive}Let $G$ be a connected reductive group. Suppose
either $r=1$, $r=2$ and $DG$ is simply connected, or $r\geq3$
and $DG$ is a product of $\SL(n,\C)$'s and $\mathsf{Sp}(n,\C)$'s.
Then $\X_{\mathbb{Z}^{r}}(G)$ is irreducible.\end{cor}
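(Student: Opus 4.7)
The plan is to apply Lemma \ref{lem:reductive}, which reduces the claim to showing that $\hom(\mathbb{Z}^{r},DG)$ is irreducible in each of the three listed cases. Since $DG$ is always connected and semisimple, the three hypotheses on $(r,DG)$ are precisely the three cases appearing in the proof of Theorem \ref{classification}, and the classical results invoked there happen to be stated at the level of the representation variety $\hom$, which is exactly what Lemma \ref{lem:reductive} demands.

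Concretely, for $r=1$ one has $\hom(\mathbb{Z},DG)\cong DG$, which is irreducible as a connected algebraic group (Steinberg, \cite{St}). For $r=2$ with $DG$ simply connected, Richardson's theorem \cite{Ri79} states that the variety of commuting pairs in a connected semisimple simply connected group is irreducible, and this variety is exactly $\hom(\mathbb{Z}^{2},DG)$. For $r\geq 3$ with $DG=\prod_{i}H_{i}$, where each $H_{i}$ is an $\SL(n_{i},\C)$ or $\mathsf{Sp}(n_{i},\C)$, Sikora \cite{Si5} establishes that each $\hom(\mathbb{Z}^{r},H_{i})$ is irreducible. The universal property of products yields the natural isomorphism $\hom(\mathbb{Z}^{r},A\times B)\cong \hom(\mathbb{Z}^{r},A)\times\hom(\mathbb{Z}^{r},B)$, and since the Cartesian product of irreducible varieties is irreducible, this gives irreducibility of $\hom(\mathbb{Z}^{r},DG)$.

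Applying Lemma \ref{lem:reductive} in each case then yields that $\X_{\mathbb{Z}^{r}}(G)$ is irreducible. There is no serious obstacle: the essential content has already been packaged into Lemma \ref{lem:reductive}, whose proof uses the central isogeny $Z^{0}\times DG \to G$ and the resulting identification (\ref{eq:reductive}) of $\hom(\mathbb{Z}^{r},G)$ as a quotient of $\hom(\mathbb{Z}^{r},(\mathbb{C}^{*})^{d})\times \hom(\mathbb{Z}^{r},DG)$ by a finite group action. The only point requiring care is to make sure that the cited irreducibility results in \cite{St,Ri79,Si5} are used at the level of $\hom$ and not merely at the level of the GIT quotient $\X$; this is indeed the form in which those results are stated.
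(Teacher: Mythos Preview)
Your argument works for $r=1$ and $r=2$, but there is a genuine gap in the $r\geq 3$ case. You assert that Sikora's result in \cite{Si5} gives irreducibility of $\hom(\mathbb{Z}^{r},H_{i})$ for $H_{i}=\SL(n,\C)$ or $\mathsf{Sp}(n,\C)$, and you explicitly flag this as ``the only point requiring care.'' Unfortunately it is false: Sikora's theorem (Theorem~\ref{pro:Sikora} in the paper) is a statement about the \emph{character variety} $\X_{\mathbb{Z}^{r}}(H_{i})$, not about $\hom(\mathbb{Z}^{r},H_{i})$. In fact the paper itself shows, just after Remark~\ref{rem:converse-for-GL}, that $\hom(\mathbb{Z}^{r},\SL(n,\C))$ is irreducible if and only if the commuting variety $C_{r,n}$ is, and it is classical (Gerstenhaber, Guralnick) that $C_{r,n}$ is \emph{reducible} for $r,n\geq 4$. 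Hence $\hom(\mathbb{Z}^{r},DG)$ is typically reducible when $r\geq 3$, and Lemma~\ref{lem:reductive} simply does not apply.

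The paper's proof avoids this obstacle by not going through Lemma~\ref{lem:reductive} at all. Instead it carries the central-isogeny decomposition all the way down to the GIT quotient, obtaining
\[
\X_{\mathbb{Z}^{r}}(G)\cong\bigl[(\mathbb{C}^{*})^{dr}\times\X_{\mathbb{Z}^{r}}(DG)\bigr]\big/\hom(\mathbb{Z}^{r},F),
\]
using that the finite central $\hom(\mathbb{Z}^{r},F)$-action commutes with conjugation and that $Z^{0}$ acts trivially. Then one invokes Theorem~\ref{classification} to get irreducibility of $\X_{\mathbb{Z}^{r}}(DG)$, and concludes since products and finite quotients of irreducible varieties are irreducible. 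The key difference is that the passage from $DG$ to $G$ is performed at the level of $\X$, where Sikora's irreducibility result is actually available.
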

\begin{proof}
As above, writing $G$ as $((\mathbb{C}^{*})^{d}\times DG)/F$, we
have:\begin{align*}
\X_{\mathbb{Z}^{r}}(G) & =\hom(\mathbb{Z}^{r},G)\quot G\\
 & =\left[\left(\hom(\mathbb{Z}^{r},(\mathbb{C}^{*})^{d})\times\hom(\mathbb{Z}^{r},DG)\right)/\hom(\mathbb{Z}^{r},F)\right]\quot G\\
 & \cong\left[(\mathbb{C}^{*})^{dr}\times\left(\hom(\mathbb{Z}^{r},DG)\quot G\right)\right]/\hom(\mathbb{Z}^{r},F)\\
 & \cong\left[(\mathbb{C}^{*})^{dr}\times\X_{\mathbb{Z}^{r}}(DG)\right]/\hom(\mathbb{Z}^{r},F)\end{align*}
 In the equation above, the fact that the action of the finite central
group $\hom(\mathbb{Z}^{r},F)$ commutes with the conjugation action
by $G$ justifies both the interchange of the two quotients, and the
fact that $G$ only acts non-trivially on the factor $\hom(\mathbb{Z}^{r},DG)$.
Note also that $G=DG\cdot Z^{0}$ acts on $DG$ by conjugation in
the natural way: writing $g=g_{D}\cdot g_{0}$ with $g_{D}\in DG$
and $g_{0}\in Z^{0}$, we have $g\cdot h:=ghg^{-1}=g_{D}h(g_{D})^{-1}$.
This justifies the identification of quotient spaces $\hom(\mathbb{Z}^{r},DG)\quot G=\X_{\mathbb{Z}^{r}}(DG)$.

Now by Theorem \ref{classification}, in either of the three assumed
situations, $\X_{\mathbb{Z}^{r}}(DG)$ is irreducible. So, both the
Cartesian product $(\mathbb{C}^{*})^{dr}\times\X_{\mathbb{Z}^{r}}(DG)$
and $\X_{\mathbb{Z}^{r}}(G)$ (its quotient by $\hom(\mathbb{Z}^{r},F)$)
are irreducible.\end{proof}

\begin{rem}
\label{rem:no-converse}Note that Corollary \ref{cor:reductive} only
applies to free Abelian groups $\Gamma$, and moreover, it is not
clear whether the converse direction holds. Namely, if $\X_{\mathbb{Z}^{r}}(G)$
is irreducible, it does not seem clear whether it is necessary that the derived group
$DG$ is a product of special linear and symplectic groups (see Remark \ref{rem:converse-for-GL}).
\end{rem}

\subsection{Further applications to algebra and topology}

Let $C_{r,n}$ denote the algebraic subvariety of $\mathbb{C}^{rn^{2}}$
consisting of commuting $r$-tuples of $n\times n$ complex matrices.
It is known that $C_{r,n}$ is irreducible when $r\leq2$ (for any
$n$) and that it is reducible (not irreducible) when $r,n\geq4$
(see \cite{Ger,Gur}). The irreducibility for some cases when $r=3$
is still an open question, as far as we know. Under the inclusions
$\SL(n,\mathbb{C})\subset\GL(n,\mathbb{C})\subset M_{n\times n}(\mathbb{C})=C_{1,n}$,
it is clear that we have natural inclusions\[
\hom(\mathbb{Z}^{r},\SL(n,\mathbb{C}))\subset\hom(\mathbb{Z}^{r},\GL(n,\mathbb{C}))\subset C_{r,n}.\]

\begin{prop}
The variety $C_{r,n}$ is irreducible if and only if $\hom(\mathbb{Z}^{r},\GL(n,\mathbb{C}))$
is dense in $C_{r,n}$, and $\hom(\mathbb{Z}^{r},\SL(n,\mathbb{C}))$ is irreducible.\end{prop}
\begin{proof}
Denote an element of $C_{r,n}$ by $A=(A_{1},\cdots,A_{r})$ so that
$A_{1},\cdots,A_{r}$ are $n\times n$ matrices satisfying $A_{i}A_{j}=A_{j}A_{i}$
for all $i,j=1,...,r$. The following simple construction relates
$C_{r,n}$ with $\hom(\mathbb{Z}^{r},\SL(n,\C))$. Consider the map
\begin{eqnarray*}
\psi:C_{r,n} & \to & \mathbb{C}^{r}\\
(A_{1},...,A_{r}) & \mapsto & (\det A_{1},\cdots,\det A_{r}).\end{eqnarray*}
By evaluating a homomorphism $\rho\in\hom(\mathbb{Z}^{r},\SL(n,\C))$
on a set of commuting generators of $\mathbb{Z}^{r}$, it is clear
that we have an isomorphism of varieties \[
\hom(\mathbb{Z}^{r},\SL(n,\C))\cong\psi^{-1}(1,\cdots,1).\]
Note that $C_{r,n}$ is invariant by dilation, that is, if $(A_{1},\cdots,A_{r})\in C_{r,n}$,
then $(\lambda A_{1},\cdots,\lambda A_{r})\in C_{r,n}$, for any scalar
$\lambda\in\mathbb{C}$. It follows that $\psi$ is surjective $\psi(C_{r,n})=\mathbb{C}^{r}$.
Now suppose that $C_{r,n}$ is irreducible. Then, clearly $\hom(\mathbb{Z}^{r},\GL(n,\C))$
is dense in $C_{r,n}$, being the complement of the trivial determinant
subvariety. Moreover, by Bertini's theorem (\cite{Sh1}, vol I, p.
139) there is a Zariski open subset $U\subset\mathbb{C}^{r}$ such
that $\psi^{-1}(y)$ is irreducible for $y\in U$. It is not obvious
a priori that $(1,\cdots,1)\in U$, but we can observe the fact that
all fibers of $\psi$ of the form $\psi^{-1}(z_{1},\cdots,z_{r})$,
with $z_{1}\cdots z_{r}\neq0$, are indeed isomorphic as algebraic
varieties. This, together with the fact that $U$ is dense, shows
that $\hom(\mathbb{Z}^{r},\SL(n,\C))\cong\psi^{-1}(1,\cdots,1)$ is
irreducible. Conversely, suppose that $\hom(\mathbb{Z}^{r},\SL(n,\C))$
is irreducible. Then, the representation variety $\hom(\mathbb{Z}^{r},\GL(n,\C))$
is also irreducible. Indeed, $\SL(n,\C)$ is the derived group of
$\GL(n,\C)$, so this follows from Lemma \ref{lem:reductive}. Assuming
that $\hom(\mathbb{Z}^{r},\GL(n,\C))$ is a dense subset in
$C_{r,n}$, and using the fact that the Zariski closure of an irreducible
algebraic set is irreducible, we conclude that $C_{r,n}$ is irreducible.\end{proof}

\begin{rem}
Note that the argument above shows that the representation varieties $\hom(\mathbb{Z}^{r},\SL(n,\C))$
and $\hom(\mathbb{Z}^{r},\GL(n,\C))$ are either simultaneously irreducible
or reducible (a slight improvement of Lemma \ref{lem:reductive}, in the case $G=\GL(n,\C)$).
\end{rem}

\begin{rem}
\label{rem:converse-for-GL}
In general, if $V$ is an irreducible
$G$-variety, then $V\quot G$ is irreducible. However, the converse
is not generally true. For example, let $G=\C^*$ act on the reducible
variety $V=\{(x,y)\in\C^{2}\ |\ xy=0\}$ by $t\cdot(x,y)=(x,t\, y)$,
for $x,y\in\C$ and $t\in\C^{*}$. Then, $V\quot G$ is irreducible,
since the punctured axis $\{(0,y)\ |\ y\neq0\}$ is a single non-closed
orbit that gets identified with the origin, the unique closed orbit
in its closure.  Therefore, since $C_{r,n}$ is reducible for $r,n\geq 4$, 
we know either $\hom(\Z^r,\SL(n,\C))$ is reducible or $\hom(\Z^r,\GL(n,\C))\subset C_{r,n}$ is not dense.  
However, it is not immediately clear which holds, despite the fact that $\hom(\Z^r,\SL(n,\C)\quot \SL(n,\C)$ is irreducible itself.
\end{rem}

Another consequence of our main Theorem concerns the fundamental group
of $\X_{\Z^{r}}(G)$.
\begin{cor}
Let $G$ be a connected reductive complex algebraic group, and let
$\X_{\Z^{r}}^{e}(G)\subset\X_{\Z^{r}}(G)$ be the identity component.
Then,

$(1)$ If $G$ is simply connected, then $\X_{\Z^{r}}^{e}(G)$ is
simply connected;

$(2)$ If $G$ is a product of $\SL(n,\C)$'s and $\mathsf{Sp}(n,\C)$'s,
then $\X_{\Z^{r}}(G)$ is simply connected.\end{cor}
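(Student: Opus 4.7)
The plan is to reduce both statements to a simple-connectivity question on the compact side, where the identity component has an explicit description as a torus-quotient. By Theorem \ref{thm:Main} together with Corollary \ref{cor:bijection-comp}, the strong deformation retraction $\X_{\Z^{r}}(G)\to\X_{\Z^{r}}(K)$ preserves path components, so its restriction yields a homotopy equivalence $\X_{\Z^{r}}^{e}(G)\simeq\X_{\Z^{r}}^{e}(K)$, where $K\subset G$ is a maximal compact subgroup. Since $K\hookrightarrow G$ is a homotopy equivalence, $G$ simply connected forces $K$ simply connected. Remark \ref{compactcase} then identifies $\X_{\Z^{r}}^{e}(K)$ homeomorphically with $S^{r}/W$, where $S=T\cap K$ is the maximal compact torus and $W$ the Weyl group acting diagonally. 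Thus part (1) reduces to showing that $S^{r}/W$ is simply connected for $K$ compact, connected, and simply connected.

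I expect the main obstacle to be this last topological claim. My approach is to work upstairs on $\mathfrak{t}^{r}$ via the covering $\exp\colon\mathfrak{t}^{r}\to S^{r}$ with deck group $\Lambda^{r}$, where $\Lambda=\pi_{1}(S)$ coincides with the coroot lattice precisely because $K$ is simply connected. This realizes $S^{r}/W=\mathfrak{t}^{r}/(\Lambda^{r}\rtimes W)$, with $W$ acting diagonally and $\Lambda^{r}$ by translation. I would then invoke Armstrong's theorem: for a properly discontinuous action on a simply connected space, the fundamental group of the orbit space is the acting group modulo the normal subgroup generated by elements with non-empty fixed-point set. So it suffices to exhibit a generating set for $\Lambda^{r}\rtimes W$ consisting of such elements. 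Each simple reflection $s_{\alpha}\in W$ fixes a hyperplane in $\mathfrak{t}$, hence a codimension-one affine subspace of $\mathfrak{t}^{r}$; more generally, for any $v_{1},\dots,v_{r}\in\mathbb{R}\alpha^{\vee}$ the affine transformation $(v_{1},\dots,v_{r},s_{\alpha})$ has a fixed point in $\mathfrak{t}^{r}$, as is solved by $(I-s_{\alpha})x_{i}=v_{i}$ componentwise. Composing two such elements produces a pure translation by an arbitrary vector in $(\Z\alpha^{\vee})^{r}$, and letting $\alpha$ range over the simple roots generates all of $\Lambda^{r}$. Combined with $W=\langle s_{\alpha}\rangle$, this exhausts $\Lambda^{r}\rtimes W$ with elements having non-empty fixed-point sets, yielding $\pi_{1}(S^{r}/W)=1$.

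For part (2), if $G$ is a product of $\SL(n,\C)$'s and $\mathsf{Sp}(n,\C)$'s, then $G$ is itself simply connected, and Theorem \ref{classification} implies $\X_{\Z^{r}}(G)$ is irreducible and in particular path-connected. Hence $\X_{\Z^{r}}^{e}(G)=\X_{\Z^{r}}(G)$, and part (1) applied to $G$ itself delivers simple connectivity of the entire character variety. Alternatively, one may decompose via Proposition \ref{gproduct} as $\X_{\Z^{r}}(G)\cong\prod_{i}\X_{\Z^{r}}(G_{i})$, apply part (1) to each simply connected factor, and use that a product of simply connected spaces is simply connected.
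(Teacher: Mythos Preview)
Your argument is correct, and for part (2) it coincides with the paper's. For part (1), however, you take a genuinely different route. The paper does not pass through the torus description $S^{r}/W$ at all: instead it invokes the result of G\'omez--Pettet--Souto that $\pi_{1}\bigl(\hom^{e}(\Z^{r},K)\bigr)\cong\pi_{1}(K)^{r}$, so that $\hom^{e}(\Z^{r},K)$ is simply connected when $K$ is, and then quotes the general fact (from Bredon) that the orbit space of a simply connected space by a \emph{connected compact} group action is again simply connected. Your approach bypasses both citations by using Remark~\ref{compactcase} to identify $\X_{\Z^{r}}^{e}(K)\cong S^{r}/W$ directly and then computing $\pi_{1}(S^{r}/W)$ via Armstrong's theorem for the properly discontinuous $\Lambda^{r}\rtimes W$-action on $\mathfrak{t}^{r}$. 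The advantage of your argument is that it is self-contained and makes transparent exactly where simple connectivity of $K$ enters (namely, $\Lambda$ equals the coroot lattice, so the simple coroots already generate it and your fixed-point generators exhaust the whole group); the paper's argument is shorter but leans on two external results. One cosmetic point: when you write ``for any $v_{1},\dots,v_{r}\in\mathbb{R}\alpha^{\vee}$'' you of course need $v_{i}\in\Z\alpha^{\vee}\subset\Lambda$ for the element to lie in $\Lambda^{r}\rtimes W$; the fixed-point equation $(I-s_{\alpha})x_{i}=v_{i}$ is still solvable since $\mathrm{im}(I-s_{\alpha})=\mathbb{R}\alpha^{\vee}$, so the argument goes through unchanged.
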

\begin{proof}
(1) Let $K$ be a maximal compact subgroup of $G$. In \cite{GoPeSo},
it is shown that the fundamental group of the identity component of
$\hom(\Z^{k},K)$ is $\pi_{1}(K)^{r}$. Thus if $G$ is simply-connected,
then so is $K$ and consequently the identity component of $\hom(\Z^{k},K)$
is simply-connected. In \cite{Br} it is shown that if a connected
compact group acts on a simply connected space $X$, then $X/K$ is
also simply-connected. We therefore conclude that $\X_{\Z^{r}}^{e}(K)$
is simply-connected. Theorem \ref{thm:Main} then implies that $\X_{\Z^{r}}^{e}(G)$
is simply-connected as well.

(2) Suppose $G$ is a a product of $\SL(n,\C)$'s and $\mathsf{Sp}(n,\C)$'s.
From Theorem \ref{classification} $\X_{\Z^{r}}^{e}(G)=\X_{\Z^{r}}(G)$,
so the result follows from (1) and the fact that both $\SL(n,\C)$
and $\mathsf{Sp}(n,\C)$ are simply connected. 
\end{proof}
We finish with a result about the rational cohomology of $\X_{\Z^{r}}(G)$.
For that we introduce some notation. Consider $n,r\geq1$ and $rn$
variables $\alpha_{j}^{i}$, $1\leq i\leq r$ and $1\leq j\leq n$,
and let $\Lambda_{\Q}[\alpha_{1}^{1},...,\alpha_{n}^{r}]$ be the
exterior algebra over $\Q$ on these variables. Let the symmetric
group on $n$ letters, $S_{n}$, act on this algebra by simultaneously
permuting the lower indices in the symbols $\alpha_{j}^{i}$ (i.e,
$\sigma\in S_{n}$ acts by $\alpha_{j}^{i}\mapsto\alpha_{\sigma(j)}^{i}$). 
\begin{thm}
The cohomology ring $H^{*}(\X_{\Z^{r}}(\GL(n,\C));\Q)$ is isomorphic
to the invariant ring $\Lambda_{\Q}[\alpha_{1}^{1},...,\alpha_{n}^{r}]^{S_{n}}$. 
\end{thm}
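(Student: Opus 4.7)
My plan is to proceed in three essentially independent steps: reduce to the compact case via Theorem \ref{thm:Main}, identify the compact character variety with a torus quotient, and compute the rational cohomology of that quotient via an invariant theory argument.

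First, since $\mathsf{U}(n)$ is a maximal compact subgroup of $\GL(n,\C)$, Theorem \ref{thm:Main} yields a strong deformation retraction of $\X_{\Z^r}(\GL(n,\C))$ onto $\X_{\Z^r}(\mathsf{U}(n))$; in particular, their rational cohomology rings are isomorphic, so it suffices to compute $H^*(\X_{\Z^r}(\mathsf{U}(n));\Q)$.

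Next, I would identify $\X_{\Z^r}(\mathsf{U}(n))$ with $T^r/W$, where $T=(S^1)^n$ is the standard maximal torus of $\mathsf{U}(n)$ and $W=S_n$ its Weyl group acting by permutation of factors. The classical fact that any commuting $r$-tuple of unitary matrices is simultaneously unitarily diagonalizable makes the natural map $\varphi\colon T^r\to \X_{\Z^r}(\mathsf{U}(n))$ surjective, and two diagonal $r$-tuples are conjugate in $\mathsf{U}(n)$ precisely when they differ by a simultaneous permutation, so $\varphi$ factors to a continuous bijection $T^r/W\to \X_{\Z^r}(\mathsf{U}(n))$. Since both are compact Hausdorff, this is a homeomorphism. (This is the compact analogue of the map $\chi$ of Theorem \ref{pro:Sikora}, and it covers the whole character variety because $\hom(\Z^r,\mathsf{U}(n))$ is connected, or equivalently, by Theorem \ref{classification}, $\X_{\Z^r}(\GL(n,\C))$ equals its identity component.)

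Finally, since $W=S_n$ is finite, the standard transfer argument with $\Q$-coefficients gives
\[
H^*(T^r/W;\Q)\cong H^*(T^r;\Q)^{W}.
\]
By K\"unneth, $H^*(T^r;\Q)=H^*(T;\Q)^{\otimes r}=\Lambda_\Q[\alpha_1^1,\ldots,\alpha_n^r]$, where $\alpha_j^i$ is the degree-$1$ generator for the $j$-th $S^1$-factor of the $i$-th copy of $T$. The $W$-action on $T^r$ is the simultaneous (diagonal) permutation of the $n$ circle factors in every copy of $T$, so on cohomology $\sigma\in S_n$ sends $\alpha_j^i\mapsto\alpha_{\sigma(j)}^i$, matching exactly the action in the statement. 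Combining the three steps gives the desired isomorphism
\[
H^*(\X_{\Z^r}(\GL(n,\C));\Q)\cong \Lambda_\Q[\alpha_1^1,\ldots,\alpha_n^r]^{S_n}.
\]
The step requiring the most care is the second one: verifying that $T^r/W\to \X_{\Z^r}(\mathsf{U}(n))$ is a homeomorphism (not merely a set-theoretic bijection), but compactness plus the classical simultaneous diagonalization of commuting unitaries handles this cleanly.
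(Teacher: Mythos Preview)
Your proof is correct and follows essentially the same route as the paper: reduce to the compact case via Theorem \ref{thm:Main}, identify $\X_{\Z^r}(\mathsf{U}(n))$ with $T^r/W$ (the paper cites \cite{FlLa} for this, while you supply the simultaneous-diagonalization argument directly), and then compute $H^*(T^r/W;\Q)$ as $H^*(T^r;\Q)^{W}$ via the finite-group transfer and K\"unneth. The only cosmetic difference is that you justify the homeomorphism $T^r/W\cong\X_{\Z^r}(\mathsf{U}(n))$ explicitly rather than by reference, which is fine.
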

\begin{sloppypar} 
\begin{proof}
As shown in \cite{FlLa}, $\X_{\Z^{r}}(\U(n))\cong T^{r}/W$ where
$W$ is the Weyl group of $\U(n)$ and $T\cong(S^{1})^{n}$ is a $n$-torus
(maximal torus in $\U(n)$). Therefore, $H^{*}(\X_{\Z^{r}}(\GL(n,\C));\Q)\cong H^{*}(T^{r}/W;\Q)$
by Theorem \ref{thm:Main} (see also Theorem \ref{pro:Sikora} and
Remark \ref{compactcase}). Since $W$ is finite and $T^{r}$ is compact,
results in \cite{Br} imply $H^{*}(T^{r}/W;\Q)\cong H^{*}(T^{r};\Q)^{W}$.
Then in \cite{Hatcher} it is shown that $H^{*}(T^{r};\Q)=H^{*}((S^{1})^{nr};\Q)$
is ring isomorphic to the exterior algebra $\Lambda_{\Q}[\alpha_{1}^{1},...,\alpha_{n}^{1},...,\alpha_{1}^{r},...,\alpha_{n}^{r}]$
where $\alpha_{j}^{i}$ can be explicitly described as the generators
of $H^{1}(S^{1};\Q)$ induced by $S^{1}\hookrightarrow T=(S^{1})^{n}\subset\U(n)\subset\GL(n,\C)\hookrightarrow\GL(n,\C)^{r}$,
with $i$ and $j$ labeling the inclusions into the corresponding
factors. Note that the Weyl group of $\U(n)$ (or $\GL(n,\C)$) is
isomorphic to $S_{n}$ and its action on $T^{r}$ is precisely the
action described above. Putting these facts together, we conclude
that the cohomology ring $H^{*}(\X_{\Z^{r}}(\GL(n,\C));\Q)$ is isomorphic
to the invariant ring $\Lambda_{\Q}[\alpha_{1}^{1},...,\alpha_{n}^{r}]^{S_{n}}$. 
\end{proof}
\end{sloppypar} Similar results hold for other groups $G$ in the
cases when $\X_{\Z^{r}}(G)$ is connected; that is, for $G=\SL(n,\C)$
and $\mathsf{Sp}(n,\C)$.

\section*{Acknowledgments}

The authors thank J. Souto and A. Pettet for generously sharing an
early version of their preprint \cite{PeSo} while we were generalizing
the proof of our theorem from \cite{FlLa}. The authors also thank
T. Baird for initially telling us about that preprint while the second
author was at the ICM in Hyderabad. We also thank A. Sikora for sharing
his preprint \cite{Si5} with us. The first named author thanks J. Dias
da Silva for motivating discussions on commuting matrices. The second named author thanks A. Sikora for discussions related to Remark
\ref{sikora} and T. Goodwillie for discussions related to Remark
\ref{goodwillie}; both on \url{mathoverflow.net}.  We also thank W. Goldman for conversations
related to Remark \ref{goldman}. Lastly, we thank the Institute Henri
Poincar\'e (IHP) and Centre de Recerca Matem\'atica (CRM) in Barcelona
for hosting us while this work was completed.

\appendix

\section{Proofs of Lemmata \ref{lem:delta-SLn} and \ref{lem:Appendix2}}

\label{appendix}

\subsection{\label{appendix1}Proof of Lemma \ref{lem:delta-SLn}.}

We first show that $\delta$ is well-defined. Let $h_{1}$ and $h_{2}$
be such that $d_{i}:=h_{i}gh_{i}^{-1}\in\Delta_{n}$. Then $(h_{1}h_{2}^{-1})d_{2}(h_{1}h_{2}^{-1})^{-1}=d_{1}$.
Two elements in a torus are conjugate if and only if they are conjugate
by an element of the Weyl group $W$ (see \cite{K}). Thus, $w:=h_{1}h_{2}^{-1}\in W$.
Recall the definition of $\delta_{t}$ in (\ref{eq:delta}) and that
$\delta_{t}|_{\Delta_{n}}$ coincides with $\sigma_{t}$ as defined
in (\ref{eq:sigma}) and so, it is $W$-equivariant for all $t$.
Thus: \begin{eqnarray*}
\delta_{t}(g) & = & h_{1}^{-1}\ \sigma_{t}(h_{1}gh_{1}^{-1})\ h_{1}=h_{1}^{-1}\ \sigma_{t}(d_{1})\ h_{1}\\
 & = & h_{1}^{-1}\ \sigma_{t}(w\ d_{2}\ w^{-1})\ h_{1}\\
 & = & h_{1}^{-1}w\ \sigma_{t}(d_{2})\ w^{-1}h_{1}\\
 & = & h_{2}^{-1}\ \sigma_{t}(h_{2}gh_{2}^{-1})\ h_{2}\end{eqnarray*}
 as required.

Secondly, $\delta_{t}(g)\in\SL(n,\C)_{ss}$ for all $t\in[0,1]$ and
$g\in\SL(n,\C)_{ss}$ since conjugates of elements in $\Delta_{n}$
are semisimple. Clearly, $\sigma$ is continuous, which implies $\delta$
is continuous as well.

Next, we show $\delta_{t}:\SL(n,\C)_{ss}\to\SL(n,\C)_{ss}$ is conjugation
equivariant. Let $l\in\SL(n,\C)$ and suppose that $hgh^{-1}\in\Delta_{n}$.
Then, $hl^{-1}$ diagonalizes $lgl^{-1}$ (as $(hl^{-1})lgl^{-1}(hl^{-1})^{-1}=hgh^{-1}$
is diagonal), and since $\delta$ is well-defined, we compute: \begin{eqnarray*}
\delta_{t}(lgl^{-1}) & = & (hl^{-1})^{-1}\sigma_{t}(hgh^{-1})hl^{-1}\\
 & = & l\ h^{-1}\sigma_{t}(hgh^{-1})h\ l^{-1}\\
 & = & l\ \delta_{t}(g)\ l^{-1},\end{eqnarray*}
 as wanted.

To prove that $\delta$ is a strong deformation retraction, it remains
to prove (a) for all $g\in\SL(n,\C)_{ss}$, $\delta_{0}(g)=g$ and
$\delta_{1}(g)=hkh^{-1}$ for some $k\in\SU(n)$ and $h\in\SL(n,\C)$,
and (b) for any $g\in\SL(n,\C)$ and $k\in\SU(n)$, then $\delta_{t}(gkg^{-1})=gkg^{-1}$
for all $t$.

Item (a) is immediate since the definition of $\delta_{t}$ immediately
implies $\delta_{0}$ is the identity on semisimple elements and that
$\delta_{1}$ maps into $\SL(n,\C)$-conjugates of $\SU(n)$. Now
we prove item (b). Note that $\delta_{t}$ is the identity on $\SU(n)$
since whenever $k=h^{-1}\mathrm{diag}(...,z_{j},...)h$ is unitary,
we have $|z_{j}|=1$ for all $1\leq j\leq n$ (showing $|z_{j}|^{-t}=1$
for all $t$). Thus, $\delta_{t}(gkg^{-1})=g\delta_{t}(k)g^{-1}=gkg^{-1}$
by equivariance of $\delta_{t}$. This completes the proof that $\delta$
is a strong deformation retraction.

Lastly, we prove $\delta_{t}$ preserves commutativity. Suppose $g_{1}g_{2}=g_{2}g_{1}$.
However, since they are diagonalizable, and are in $\SL(n,\C)$, we
know there is an element $h\in\SL(n,\C)$ which simultaneously triangulizes
them (see \cite{Dra}). However, since conjugation preserves semisimplicity
and non-diagonal upper triangular matrices are not semisimple, we
conclude that $h$ simultaneously diagonalizes $g_{1}$ and $g_{2}$.
Then $\delta_{t}(g_{1})$ and $\delta_{t}(g_{2})$ commute if and
only if $h\delta_{t}(g_{1})h^{-1}$ and $h\delta_{t}(g_{2})h^{-1}$
commute. It suffices to show that $\delta_{t}(hg_{1}h^{-1})$ and
$\delta_{t}(hg_{2}h^{-1})$ commute for all $t$ by equivariance.
However, both $\delta_{t}(hg_{1}h^{-1})$ and $\delta_{t}(hg_{2}h^{-1})$
are in $\Delta_{n}$ for all $t$, and thus they commute.

\subsection{\label{sub:A2}Proof of Lemma \ref{lem:Appendix2}}

Keeping the notation of Section \ref{sec:Four}, we start with a simple
lemma. 
\begin{lem}
\label{lem:power-map-subtorus}Let $H\cong\left(\mathbb{C}^{*}\right)^{k}$
be a subtorus of $\Delta_{n}\cong\left(\mathbb{C}^{*}\right)^{n-1}$
$($so $k\leq n-1)$. Then $p_{m}^{-1}(H)\subset\Delta_{n}$ is the
disjoint union of a finite number of affine subvarieties, all of which
are isomorphic to $H$ $($as varieties, but not as groups$)$. More
precisely, every connected component of $p_{m}^{-1}(H)$ is a coset
$gH$, for some $g\in p_{m}^{-1}(H)$.\end{lem}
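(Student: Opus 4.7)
The plan is to realize $p_m^{-1}(H)$ as a closed algebraic subgroup of $\Delta_n$, identify its identity component with $H$, and then read off the coset decomposition. First I would set $\tilde H := p_m^{-1}(H)$; since $p_m$ is an endomorphism of the algebraic group $\Delta_n$ and $H$ is a closed subgroup, $\tilde H$ is automatically a closed algebraic subgroup of $\Delta_n$.

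Next I would establish the inclusion $H \subseteq \tilde H^{0}$, the identity component. Any surjective group endomorphism of a torus is surjective onto the torus (its kernel is finite), so in particular $p_m(H)=H$, which gives $H \subseteq \tilde H$; since $H$ is connected and contains the identity, $H \subseteq \tilde H^{0}$. For the reverse inclusion I would argue by dimension: the restriction $p_m|_{\tilde H^{0}}$ is a morphism of algebraic groups whose image lies in $H$, and whose kernel is contained in $\ker p_m$, which is the finite group of $m$-th roots of unity in $\Delta_n$. Hence
\[
\dim \tilde H^{0} \;=\; \dim\bigl(p_m(\tilde H^{0})\bigr) \;\le\; \dim H \;=\; k,
\]
while $H \subseteq \tilde H^{0}$ forces $\dim \tilde H^{0} \ge k$. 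Equality of the two dimensions, together with the fact that $H$ is a connected closed subgroup of the connected group $\tilde H^{0}$, yields $H=\tilde H^{0}$.

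To finish, I would use the general fact that an affine algebraic group has only finitely many connected components, and that the identity component is a normal subgroup whose cosets are exactly the connected components. Applied to $\tilde H$, this gives a finite disjoint union $\tilde H = \bigsqcup_{i} g_i H$ with $g_i \in \tilde H = p_m^{-1}(H)$, and each coset $g_i H$ is isomorphic to $H$ as a variety via left translation $h \mapsto g_i h$ (this is a morphism with inverse $h \mapsto g_i^{-1} h$, so an isomorphism of affine varieties, though of course not of groups unless $g_i \in H$).

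The only genuinely delicate point in this outline is the identification $\tilde H^{0} = H$, i.e. the dimension computation. The rest is essentially formal, resting on standard facts about closed subgroups of algebraic groups (normality and finiteness of the component group) and on the fact that $\ker p_m$ is finite in $\Delta_n$, a consequence of $p_m$ being the coordinate-wise $m$-th power on a torus.
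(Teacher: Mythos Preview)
Your proof is correct and follows essentially the same strategy as the paper: recognize $\tilde H = p_m^{-1}(H)$ as a closed algebraic subgroup of the torus $\Delta_n$, observe it has finitely many components, and conclude that the components are cosets. The paper's argument is terser: it notes that $\tilde H$ is a Zariski-closed subgroup of $\Delta_n$, hence reductive with finitely many components, and then asserts that the $H$-cosets are closed and open in $\tilde H$, hence are the components. Your version is actually more careful at the one delicate point: the paper's claim that each coset $gH$ is \emph{open} in $\tilde H$ implicitly requires $[\tilde H:H]<\infty$, i.e.\ $\tilde H^{0}=H$, which you establish explicitly via the dimension count using finiteness of $\ker p_m$. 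So the two proofs are the same in outline, with yours filling in the justification the paper leaves implicit.
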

\begin{proof}
$gH$ are all in $p_{m}^{-1}(H)$ for all $g$ in $p_{m}^{-1}(H)$
since the inverse image is contained in $\Delta_{n}$ (it is Abelian).
In fact, $p_{m}^{-1}(H)$ is a subgroup for this reason. Therefore
it is reductive since $p_{m}$ is algebraic (and thus continuous),
and inverse images of Zariski closed sets under algebraic maps are
Zariski closed. However, Zariski closed subgroups of a torus are reductive
since the radical must be a torus, and therefore $p_{m}^{-1}(H)$
is a reductive subgroup and so has finitely many components. Since
cosets are disjoint and homeomophic, each is closed in $\Delta_{n}$
since $H$ is closed in $\Delta_{n}$. On the other hand, each is
open in the subspace topology and so are components. Only the identity
coset is a group in and of itself, the rest are algebraic subsets
(closed cosets are algebraic). 
\end{proof}
Now we conclude the proof of Lemma \ref{lem:Appendix2}. 
\begin{proof}
(1) Let $g\in G_{ss}$ and assume $g\in G_{f}$ with $f^{m}=\mathbf{1}$.
Then, as in Lemma \ref{lem:semisimple-inclusion}, there is $h\in G_{0}$
so that $hg^{m}h^{-1}\in T_{0}\subset\Delta_{n}$. By definition of
$\delta$ and property (6) of Lemma \ref{lem:delta-SLn}, we have
$(\delta_{t}(g))^{m}=\delta_{t}(g^{m})=h^{-1}\sigma_{t}(hg^{m}h^{-1})h$.
So, \begin{equation}
\sigma_{t}(hg^{m}h^{-1})=h(\delta_{t}(g))^{m}h^{-1}=(h\delta_{t}(g)h^{-1})^{m}.\label{lemeqn}\end{equation}

Next, we prove $\sigma_{t}$ preserves $T_{0}$ for all $t\in[0,1]$.
Indeed, this deformation is nothing but the usual polar deformation
restricted to the torus $T_{0}$, which therefore stays in $T_{0}$;
see page 117 in \cite{K}. The main idea is this: the torus $T_{0}$
is algebraically cut out of the diagonals $\Delta_{n}$, however since
$T_{0}$ is a group and $\mathrm{diag}(e^{i\theta_{1}},...,e^{i\theta_{n}})\in K_{0}$
we know that all positive multiples of $\mathrm{diag}(r_{1},...,r_{n})$
are in $T_{0}$ which implies all positive multiples satisfy the relations.
This then implies that $\mathrm{diag}(r_{1}^{1-t},...,r_{n}^{1-t})$
satisfy the relations for all $t$, which in turn implies $\mathrm{diag}(r_{1}^{1-t}e^{i\theta_{1}},...,r_{n}^{1-t}e^{i\theta_{n}})$
is in $T_{0}$ for all $t$.

Now, we use the fact that $\sigma_{t}$ preserves $T_{0}$ for all
$t\in[0,1]$ to conclude (from Equation \ref{lemeqn}) that for all
$t$, $(h\delta_{t}(g)h^{-1})^{m}\in T_{0}$, and therefore $h\delta_{t}(g)h^{-1}\in p_{m}^{-1}(T_{0})$
for all $t$. By continuity of $\delta_{t}$ with respect to the parameter
$t$, we conclude that $h\delta_{t}(g)h^{-1}$ is always in the same
connected component of $p_{m}^{-1}(T_{0})$ for all $t$; denote this
component by $T_{1}$. Now, by Lemma \ref{lem:power-map-subtorus}
every connected component of $p_{m}^{-1}(T_{0})$ is of the form $\tilde{g}T_{0}$
for some $\tilde{g}\in p_{m}^{-1}(T_{0})$.

Since $hgh^{-1}=h\delta_{0}(g)h^{-1}\in T_{1}\subset p_{m}^{-1}(T_{0})$,
we may take $\tilde{g}=hgh^{-1}\in G$. Therefore, the coset $T_{1}=\tilde{g}T_{0}=hgh^{-1}T_{0}\subset G$.
More generally, this means that any connected component of $p_{m}^{-1}(T_{0})$
which contains a point of $G$, is completely contained inside $G$.
So, we conclude that $h\delta_{t}(g)h^{-1}\in G$, which means that
$\delta_{t}(g)\in G$ for all $t$.

Finally $\delta_{t}(g)\in G_{ss}$ since it is a $m^{{\rm th}}$ root
of a semisimple $\delta_{t}(g^{m})$, and $g_{u}^{m}=\mathbf{1}$
implies $g_{u}=\mathbf{1}$ (\cite{Bo}, page 87).

(2) If we let $t=1$ in the formulas above, since conjugates of semisimple
elements are semisimple (\cite{Bo}, page 85), we get $h\delta_{1}(g)h^{-1}\in G_{ss}\cap\SU(n)\subset K$,
which means by definition, that $\delta_{1}(g)\in G(K)$. 
\end{proof}

%\bibliographystyle{plain}
%\bibliography{FreeAbelian}

\def\cdprime{$''$} \def\cdprime{$''$} \def\cprime{$'$} \def\cprime{$'$}
  \def\cprime{$'$} \def\cprime{$'$} \def\cprime{$'$}

\end{document}